\bfseries\contentslabel{2em}}
\footnotesize\contentslabel{3em}}
\numberwithin{equation}{section}
\newtheorem{theorem}{Theorem}
\newtheorem{lemma}{Lemma}
\newtheorem{assumption}{Assumption}
\newtheorem{corollary}[lemma]{Corollary}
\theoremstyle{definition}
\newtheorem{definition}{Definition}
\newcommand{\dist}{\mathrm{d}}
\DeclareMathOperator{\Exp}{Exp}
\definecolor{cdarkred}{rgb}{0.55,0.0,0.0}
\definecolor{darkblue}{rgb}{0.0,0.0,0.55}
\definecolor{cgray}{gray}{0.55}
\definecolor{cdarkblue}{RGB}{30,30,200}
\newcommand{\algo}{\textsc{RF$^{2}$SA}\xspace}
\newcommand{\Mc}{\mathcal{M}}
\newcommand{\Nc}{\mathcal{N}}
\DeclareMathOperator{\argmin}{argmin}
\DeclareMathOperator{\grad}{grad}
\DeclareMathOperator{\Hess}{Hess}
\DeclareMathOperator{\Exs}{\mathbb E}
\begin{document}

\title{Riemannian Bilevel Optimization}

\author{\name{Sanchayan Dutta}\email{dutta@ucdavis.edu}\\
\addr{UC Davis, Davis, CA, USA}\\
\name{Xiang Cheng} \email{chengx@mit.edu}\\
\addr{Massachusetts Institute of Technology, Cambridge, USA}\\[2pt]
\name{Suvrit Sra} \email{s.sra@tum.de}\\
\addr{TU Munich, Garching, Germany}
}

\maketitle

\begin{abstract}
We develop new algorithms for Riemannian bilevel optimization. We focus in particular on batch and stochastic gradient-based methods, with the explicit goal of avoiding second-order information such as Riemannian hyper-gradients. We propose and analyze \(\algo\), a method that leverages first-order gradient information to navigate the complex geometry of Riemannian manifolds efficiently. Notably, $\algo$ is a single-loop algorithm, and thus easier to implement and use. Under various setups, including stochastic optimization, we provide explicit convergence rates for reaching \(\epsilon\)-stationary points. We also address the challenge of optimizing over Riemannian manifolds with constraints by adjusting the multiplier in the Lagrangian, ensuring convergence to the desired solution without requiring access to second-order derivatives.
\end{abstract}

\section{Introduction}
We investigate Riemannian bilevel optimization problems described by:
\begin{equation}
\label{eq:P}
\min_{x \in \mathcal M}\  F(x) := f\left(x, y^{*}(x)\right) \quad \text{s.t.} \quad y^{*}(x) \in \argmin_{y \in \mathcal N} g(x, y), \tag{\textbf{P}}
\end{equation}
where \(\Mc\) and \(\Nc\) are $d_x$- and $d_y$-dimensional complete Riemannian manifolds, respectively, and \(f\) and \(g\) are smooth functions. The function \(F\) serves as the outer objective, with \(g\) as the inner objective and \(y^*(x)\) as the optimal solution for the inner problem.

Bilevel optimization provides a useful model for hierarchical decision-making, and is thus of great value to various fields such as machine learning, economics, operations research, and engineering. In machine learning it is directly relevant to applications such as meta-learning \citep{rajeswaran2019meta,hospedales2021meta,pham2020contextual,ravi2016optimization}, hyper-parameter optimization \citep{franceschi2018bilevel,bao2021stability,pedregosa2016hyperparameter}, model selection \citep{kunapuli2008bilevel,giovannelli2021bilevel}, architecture search \citep{liu2018darts,wang2022zarts,zhang2021idarts}, and reinforcement learning \citep{konda1999actor,sutton2018reinforcement,hong2023two}. Algorithms like the two-timescale stochastic approximation (TTSA) \citep{hong2023two} highlight the ongoing development of efficient solutions for these problems.

Riemannian optimization arises in several applications, e.g., policy optimization, where algorithms utilize the Fisher Information manifold \citep{ding2020natural,kakade2001natural,cen2022fast}, and matrix factorization, where problems are reformulated over suitable matrix manifolds \citep{ahn2021riemannian,hou2020fast,li2021weakly}. Hyperbolic manifolds have also been applied in neural network architecture design \citep{peng2021hyperbolic} and image segmentation \citep{ghadimi2018approximation}. 

Toward solving~\eqref{eq:P} we build on recent progress in (Euclidean) bilevel optimization~\citep{kwon2023fully}, and develop Riemannian ``fully first-order'' batch and stochastic methods. Indeed, while some aspects of the Euclidean analysis translate directly into the Riemannian setting, curvature causes distortion that poses unique challenges in developing the analysis. 


\subsection{Related Work}

\subsubsection*{Riemannian optimization}

The development of efficient stochastic gradient-based optimization algorithms is crucial across various domains, with first-order methods preferred for their computational efficiency. Several first-order stochastic methods have been adapted to the Riemannian setting, such as Riemannian stochastic gradient descent (RSGD) \citep{bonnabel2013stochastic}, and AMSGRAD \citep{becigneul2018riemannian}. Convergence analysis in geodesically convex cases has been proposed \citep{zhang2016first}, and nonconvex Riemannian optimization methods have been developed to handle complex landscapes \citep{kasai2018riemannian, hu2024riemannian}. Saddle-point problems, including geodesic min-max formulations, have also been explored \citep{zhang2023sion}.

\medskip

\subsubsection*{Bilevel optimization}

The motivation for bilevel optimization on manifolds arises from addressing lower-level problems that are non-strongly convex, frequently encountered outside traditional Euclidean spaces. \cite{chen2023bilevel} highlights the necessity for geometry-aware optimization techniques, particularly useful in meta-learning and hyperparameter tuning scenarios. Additional research focuses on constrained bilevel problems in machine learning, where model parameters are defined on specific manifolds \citep{lin2008riemannian, franceschi2018bilevel, tabealhojeh2023rmaml}. Many of these problems involve non-convex constraints, which existing methods often cannot adequately address. Moreover, even those methods that can handle such constraints may not fully exploit the computational advantages provided by a geometry-aware approach \citep{beck2023computationally}.

\subsubsection*{Riemannian bilevel optimization}

\cite{li2024riemannian} investigated hypergradient calculations for bilevel optimization on Riemannian manifolds, proposing deterministic and stochastic algorithms (RieBO and RieSBO). Similarly, \cite{han2024framework} developed a framework for bilevel optimization, offering several hypergradient estimation strategies and conducting convergence and complexity analyses. Both works emphasize the importance of hypergradient information. In contrast, our approach is fully first-order, avoiding second-order information like Riemannian hypergradients, presenting a novel direction in this field.

Useful Riemannian bilevel problems often involve manifold constraints, such as machine learning with manifold constraints or robust PCA on the Stiefel manifold \citep{yao2024constrained, hong2023two, xu2023efficient, khanduri2023linearly, podosinnikova2014robust}. Our paper contributes tools for manifold-based bilevel optimization, opening avenues in optimal transport and geometric ODEs where manifold structures are crucial \citep{figalli2011necessary, udriste2020geometric}.

\subsection{Main Contributions}

The structure of our analysis parallels the fully-first order Euclidean approach \citep{kwon2023fully}. We reformulate the optimization problem (\ref{eq:P}) into a constrained, single-level problem:
\[
\min_{x \in \Mc, y \in \Nc} f(x, y) \quad \text{s.t.} \quad g(x, y) - g^*(x) \leq 0,
\tag{\textbf{P'}}\label{eqn:P'}\]
where \( g^*(x) = g(x, y^*(x)) \). The associated Lagrangian \( \mathcal{L}_{\lambda}(x, y) = f(x, y) + \lambda ( g(x, y) - g^*(x) ) \) uses a multiplier \( \lambda > 0 \). To optimize \( \mathcal{L}_{\lambda} \), we employ Riemannian gradient descent, calculating gradients with first-order derivatives, akin to the Euclidean approach.

The main challenge in addressing \(\eqref{eqn:P'}\) is choosing \(\lambda\). The optimal solution \(x^{*} = \argmin _{x} F(x)\) is found as \(\lambda \to \infty\). However, a high \(\lambda\) makes \(\mathcal{L}_{\lambda}(x, y)\) non-smooth, affecting gradient-descent efficacy.

To address this, we begin with \(\lambda = \lambda_{0} > 0\) and increment it gradually. Each iteration \(k\) sets \(\lambda_{k} = O\left(k^{b}\right)\) for \(b \in (0,1]\), balancing bias removal and nonsmoothness increase rate. This strategy is vital for converging to an \(\epsilon\)-stationary point of \(F\) without needing second derivatives.

One primary contribution is the derivation of an optimal growth rate for \( \lambda_k \), ensuring non-asymptotic convergence to an \(\epsilon\)-stationary point of \( F \) without needing second-order derivatives. Algorithm \(\algo\) advances Riemannian bilevel optimization by employing a first-order gradient method that navigates curvature complexities such as varying sectional curvatures, parallel transport, and the geometry of geodesics. These complexities impact the behavior of gradients and necessitate specialized techniques to ensure efficient convergence. Our method converges efficiently to an \(\epsilon\)-stationary solution, as outlined in Theorem~\ref{thm:main}.

\begin{theorem}[Informal]
\label{t:informal}
There exist choices of hyperparameters of Algorithm \algo such that the following stationarity guarantees hold:
\begin{enumerate}
    \item \label{item:noise_both}If noise is present in $\grad f$ and $\grad g$, then $\mathbb{E}\left[\| \grad F(x_{K}) \|^{2}\right] = \tilde{O}(K^{-2/7})$;
    \item \label{item:noise_f_only}If noise is present only in $\grad f$, then $\mathbb{E}\left[\| \grad F(x_{K}) \|^{2}\right] = \tilde{O}(K^{-2/5})$; and 
    \item \label{item:exact_gradients}If both $\grad f$ and $\grad g$ are exact, then $\mathbb{E}\left[\| \grad F(x_{K}) \|^{2}\right] = \tilde{O}(K^{-2/3})$.
\end{enumerate}
\end{theorem}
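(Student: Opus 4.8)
The plan is to transplant the fully-first-order Euclidean template onto the manifolds \(\Mc,\Nc\) by replacing the true hypergradient \(\grad F\) with the partial Riemannian gradient of the Lagrangian \(\mathcal{L}_{\lambda}\), and to annihilate the resulting bias by letting \(\lambda_k\) grow while the inner iterates chase their moving targets. Concretely, I would maintain two inner sequences in a single loop: \(y_k\) tracking \(y^*_{\lambda_k}(x_k):=\argmin_y\,[f(x_k,y)+\lambda_k g(x_k,y)]\) and \(z_k\) tracking \(y^*(x_k)\), so that the outer search direction
\[
\widehat{\grad F}(x_k)=\grad_x f(x_k,y_k)+\lambda_k\big(\grad_x g(x_k,y_k)-\grad_x g(x_k,z_k)\big)
\]
substitutes the Riemannian envelope (Danskin) identity \(\grad g^*(x)=\grad_x g(x,y^*(x))\) for the second-order hypergradient. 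All updates are retraction/\(\Exp\)-based, and all inner products and transported vectors are taken with respect to the Riemannian metric.

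First I would prove a Riemannian hypergradient-bias lemma: under geodesic strong convexity of \(g(x,\cdot)\) and joint smoothness, \(\|\grad_x\mathcal{L}_{\lambda}(x,y^*_\lambda(x))-\grad F(x)\|=O(1/\lambda)\). The argument mirrors the Euclidean one but compares \(\grad_x g\) at \(y^*_\lambda(x)\) and \(y^*(x)\) by parallel transport along the connecting geodesic; the transport discrepancy carries curvature-dependent factors controlled by sectional-curvature bounds together with \(\dist(y^*_\lambda(x),y^*(x))=O(1/\lambda)\). Next I would establish, for each inner sequence, a tracking recursion of the schematic form
\[
\Exs[\dist(y_{k+1},y^*_{\lambda_{k+1}}(x_{k+1}))^2]\le(1-c\eta_y)\,\Exs[\dist(y_k,y^*_{\lambda_k}(x_k))^2]+O(\eta_x^2\|\widehat{\grad F}\|^2)+O\!\left(\tfrac{(\lambda_{k+1}-\lambda_k)^2}{\lambda_k^{4}}\right)+O(\eta_y^2\sigma_g^2),
\]
where the contraction comes from geodesic strong convexity, the first drift from the Lipschitz dependence of the target on \(x\), the second from incrementing \(\lambda\) (using \(\dist(y^*_{\lambda_{k+1}},y^*_{\lambda_k})=O(|\lambda_{k+1}-\lambda_k|/\lambda_k^2)\)), and curvature again enters the comparison of \(\dist\) across the retraction step.

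Then I would run a descent analysis on \(F\) using its geodesic \(L_F\)-smoothness, \(F(x_{k+1})\le F(x_k)-\eta_x\vdot{\grad F(x_k)}{\widehat{\grad F}(x_k)}+\tfrac{L_F}{2}\eta_x^2\|\widehat{\grad F}(x_k)\|^2\). Decomposing \(\widehat{\grad F}=\grad F+(\text{bias})+(\text{amplified inner error})+(\text{noise})\) and applying Young's inequality, the decisive structural point is that the inner-error contribution is scaled by \(\lambda_k\) (hence \(\lambda_k^2\) in squared norm), the effective smoothness scales like \(L_{\lambda_k}=O(\lambda_k)\), and the stochastic-\(g\) noise enters with variance \(\lambda_k^2\sigma_g^2\). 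I would fold everything into a Lyapunov function \(\Phi_k=(F(x_k)-F^\star)+c_y\Exs[\dist(y_k,y^*_{\lambda_k}(x_k))^2]+c_z\Exs[\dist(z_k,y^*(x_k))^2]\), choosing \(c_y,c_z\) so the inner-error increments generated by the outer step are absorbed by the contractions in the tracking recursions, and telescope.

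The rate optimization is where the three cases separate. Substituting \(\lambda_k=\Theta(k^b)\) and power-law step sizes, the (step-size-weighted) average of \(\Exs\|\grad F(x_k)\|^2\) is bounded by a descent term \(O(1/(K\eta_x))\), a bias term \(O(\lambda_K^{-2})\), an amplified-tracking term, and a variance term \(O(\eta_x\lambda_K(\sigma_f^2+\lambda_K^2\sigma_g^2))\), subject to the stability constraint \(\eta_x\lesssim 1/\lambda_K\). Equalizing the dominant exponents gives \(b=1/7,\,\eta_x\sim k^{-5/7}\) yielding \(\tilde O(K^{-2/7})\) when \(\sigma_g^2>0\); \(b=1/5,\,\eta_x\sim k^{-3/5}\) yielding \(\tilde O(K^{-2/5})\) when only \(\sigma_f^2>0\); and \(b=1/3,\,\eta_x\sim 1/\lambda_k\) yielding \(\tilde O(K^{-2/3})\) in the exact case, matching the \(\epsilon^{-7},\epsilon^{-5},\epsilon^{-3}\) Euclidean complexities. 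I expect the main obstacle to be the coupling between the growing multiplier and the single-loop inner tracking: since the inner error is amplified by \(\lambda_k^2\) in the outer direction while \(\lambda_k\to\infty\) and the target \(y^*_{\lambda_k}\) itself drifts with \(\lambda_k\), the product \(\lambda_k^2\,\Exs[\dist^2]\) must be kept bounded; making the curvature-corrected comparison inequalities tight enough that this product does not blow up, and that the \(\lambda\)-increment drift remains summable, is the delicate step that pins down the admissible exponent \(b\).
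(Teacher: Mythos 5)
Your plan reproduces the paper's architecture almost exactly: the same Lagrangian reformulation exploiting the Riemannian Danskin identity $\grad g^*(x)=\grad_x g(x,y^*(x))$, the same single-loop scheme with $y_k$ chasing $y^*_{\lambda_k}(x_k)$ and $z_k$ chasing $y^*(x_k)$, the same $O(1/\lambda)$ bias lemma (the paper's Lemma~1, with constant $C_\lambda$), the same target-drift estimate (the paper's Lemma~2 gives $\dist(y^*_{\lambda_1}(x_1),y^*_{\lambda_2}(x_2))\le \frac{2(\lambda_2-\lambda_1)}{\lambda_1\lambda_2}\frac{l_{f,0}}{\mu_g}+l_{\lambda,0}\dist(x_1,x_2)$, matching your two drift terms), a smoothness-based descent on $F$ folded into a Lyapunov telescoping argument, and identical exponent bookkeeping: your $b=1/7,1/5,1/3$ with $\eta_x\sim k^{-5/7},k^{-3/5},k^{-1/3}$ correspond precisely to the paper's choices $a=5/7,c=4/7$; $a=3/5,c=2/5$; $a=1/3,c=0$. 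The curvature handling you gesture at is done in the paper via the Alexandrov cosine law, producing factors $\sqrt{|\kappa|}\,d/\tanh(\sqrt{|\kappa|}\,d)$ whose excess over $1$ is second order in the displacement and hence negligible after summation.

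There is, however, one concrete gap: your Lyapunov function $\Phi_k=(F(x_k)-F^\star)+c_y\,\Exs[\dist(y_k,y^*_{\lambda_k}(x_k))^2]+c_z\,\Exs[\dist(z_k,y^*(x_k))^2]$ with \emph{constant} weights $c_y,c_z$ cannot absorb the tracking error, and the telescoping step as you describe it would fail. As you yourself observe, the inner error enters the outer descent amplified by $\lambda_k^2$: the paper's Proposition~B.1 charges a term of order $\alpha_k\lambda_k^2\,\dist(y_{k+1},y^*_{\lambda,k})^2$ against the decrease of $F$. But the tracking recursion only contracts by $1-O(\mu_g\beta_k)=1-O(\mu_g\alpha_k\lambda_k)$ per outer step, so the decrement available for absorption is $c_y\,O(\mu_g\alpha_k\lambda_k)\,\dist^2$; matching $c_y\,\mu_g\alpha_k\lambda_k\gtrsim \alpha_k\lambda_k^2$ forces $c_y\gtrsim \lambda_k/\mu_g$, which no fixed constant satisfies once $\lambda_k\to\infty$. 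This is exactly why the paper's potential is $V_k=(F(x_k)-F^*)+\lambda_k l_{g,1}\,I_k+\tfrac{\lambda_k l_{g,1}}{2}J_k$, with weights growing proportionally to $\lambda_k$ (the paper explicitly flags this as the ``correct scaling factor''). The repair is not free: because the weight changes across iterations, telescoping produces extra terms $\delta_k I_{k+1}$ and $\delta_k J_{k+1}$, and controlling them is precisely what forces the additional step-size constraint $\delta_k/\lambda_k\le T\mu_g\beta_k/16$ in the paper's Theorem~2. With that modification—$\lambda_k$-weighted potential plus the $\delta_k/\lambda_k$ condition—your outline coincides with the paper's proof.
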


\noindent

Theorem \ref{t:informal} is an informal version of our convergence guarantee; the formal version is Theorem \ref{thm:main}. 

Below, we summarize the key aspects of our result:

\begin{itemize}[leftmargin=0.4cm]
    \item \textbf{Stochastic First-Order Algorithm Without Hypergradient Computations}: 
    Algorithm \algo uniquely employs only gradient computations, avoiding the complex hypergradient calculations seen in prior works like \citep{ghadimi2018approximation}. This simplification is especially beneficial in large-scale machine learning tasks, where data completeness is not guaranteed, reducing the need for extensive iterations and streamlining the optimization process in stochastic settings.

    \item \textbf{Convergence Rate Guarantees in Stochastic Scenarios}:
    The convergence rates for Algorithm \algo in stochastic gradient scenarios are $\tilde{O}(\epsilon^{-3.5})$ when both $\grad f$ and $\grad g$ are noisy, improving to $\tilde{O}(\epsilon^{-2.5})$ when only $\grad f$ is noisy, and $\tilde{O}(\epsilon^{-1.5})$ under exact gradients. These rates are tight and align with those expected in Euclidean optimization, adapted to the complexities of Riemannian manifolds.

    \item \textbf{Achieving $\epsilon$-Stationarity}:
    The algorithm effectively converges to an $\epsilon$-stationary point, where the norm of the gradient is below $\epsilon$. This capability is critical for assessing the efficacy of optimization algorithms under various gradient noise conditions.

    \item \textbf{Modular Analysis of $\lambda$}:
    Our analysis reveals how different adjustments in $\lambda$ affect step size, noise variance, and bias, providing insights that help optimize algorithm performance on Riemannian manifolds. This modular approach allows for the strategic modification of $\lambda$, enhancing both computational efficiency and algorithm robustness, paving the way for future advancements in manifold-based optimization algorithm design.
\end{itemize}

\section{Mathematical Background}

\subsection{Some Precepts of Riemannian Geometry}

The Hessian of a function \(f\) at a point \(p\) on a manifold \(\Mc\) with a metric \(g\) is defined using the Levi-Civita connection \(\nabla\). It is a bilinear form that can be expressed in local coordinates as:
\begin{equation}
\Hess_f(X,Y) = X(Y(f)) - (\nabla_X Y)(f),
\end{equation}
where \(X\) and \(Y\) are vector fields on \(\Mc\), and \(\nabla_X Y\) is the covariant derivative of \(Y\) in the direction of \(X\).

The term \(\nabla_{xy}^2\) can be related to the components of the \(\Hess\) in local coordinates. Specifically, if \(X\) and \(Y\) are coordinate vector fields corresponding to coordinates \(x\) and \(y\) respectively, then \(\nabla_{xy}^2 f\) would correspond to the \((x,y)\)-component of the Hessian matrix of \(f\), which is:

\begin{equation}
\Hess_f(X,Y) = \nabla_X \nabla_Y f - \nabla_{\nabla_X Y} f 
\end{equation}

In local coordinates, this would be written as:

\begin{equation}
\Hess_f(\partial_x, \partial_y) = \frac{\partial^2 f}{\partial x \partial y} - \Gamma_{xy}^k \frac{\partial f}{\partial k}
\end{equation}

where \(\Gamma_{xy}^k\) are the Christoffel symbols of the second kind, which encode the manifold's connection and hence its curvature.

\subsection{Main Definitions and Assumptions}
\label{ss:assumptions}

\begin{definition}[\(\epsilon\)-stationary point]
A point \(x \in \Mc\) is called \(\epsilon\)-stationary if \(\|\grad F(x)\|_x^{2} \leq \epsilon\). A stochastic algorithm is said to achieve an \(\epsilon\)-stationary point in \(K\) iterations if \(\mathbb{E}\left[\left\|\grad F\left(x_{K}\right)\right\|_{x_K}^{2}\right] \leq \epsilon\), where the expectation is over the algorithm's stochasticity.
\end{definition}

\textbf{Notation:} \(O_{\mathrm{P}}(\cdot)\) denotes the order of constants dependent on instance-specific parameters (e.g., Lipschitz constants, strong convexity, and smoothness conditions). The notation \(a_{k} \asymp b_{k}\) indicates that \(a_{k}\) and \(b_{k}\) decrease or increase at the same rate as \(k \rightarrow \infty\), i.e., \(\lim _{k \rightarrow \infty} a_{k} / b_{k} = \Theta(1)\). The norm \(\|\cdot\|_x\) is induced by the metric at \(x\), reflecting the geometry of \(\Mc\).

To outline the class of problems (\ref{eq:P}) of interest, we assume the outer-level objective's optimal value on \(\Mc\) is bounded below by \(F^{*} \coloneqq \arg \min _{x \in \Mc} F(x)>-\infty\).

\begin{assumption}[Objective Functions Properties]
\label{assumption:objective_functions}
The objective functions \(f\) and \(g\) exhibit the following properties:
\begin{enumerate}
    \item \(f\) is continuously differentiable on \(\Mc\). Its gradient satisfies \(l_{f, 1}\)- smoothness, meaning that for any two points \(x, y\) on \(\Mc\),
    \begin{equation}
    \| \text{PT}_{y \leftarrow x} \nabla f(x) - \nabla f(y) \|_{\Mc} \leq l_{f, 1} d_{\Mc}(x, y).
    \end{equation}
    \item \(g\) is continuously differentiable on \(\mathcal N\). Its gradient satisfies \(l_{g, 1}\)-smoothness, implying,
    \begin{equation}
    \| \text{PT}_{y \leftarrow x} \nabla g(x) - \nabla g(y) \|_{\Mc} \leq l_{g, 1} d_{\Mc}(x, y).
    \end{equation}
    \item For every \(\bar{x} \in \Mc\), the magnitude of the gradient \(||\nabla_{y} f(\bar{x}, y)||_{\Mc}\) is bounded by \(l_{f, 0}\) for all \(y\).
\end{enumerate}
\end{assumption}

\begin{assumption}[Lower-level Objective Properties]
\label{assumption:lower_level}
For the lower-level objective \(g\) on \(\Nc\):
\begin{enumerate}
    \item For every \(\bar{x} \in \Mc\), the function \(g(\bar{x}, y)\) is \(\mu_{g}\)-strongly convex in \(y\) for some \(\mu_{g}>0\) on \(\Nc\).
    \item \(g\) is twice continuously differentiable on \(\Nc\), and its Hessian \(\nabla^{2} g\) satisfies \(l_{g, 2}\)-Lipschitz continuity,
    \begin{equation}
    \| \text{PT}_{y \leftarrow x}^{\gamma} \nabla^2 g(y) \text{PT}_{x \leftarrow y}^{\gamma} - \nabla^2 g(x) \|_{\Nc} \leq l_{g, 2} d_{\Nc}(x,y).
    \end{equation}
\end{enumerate}
\end{assumption}

\begin{assumption}[Gradient Access]
\label{assumption:gradient_access}
Access to the gradients of the objective functions \(f\) and \(g\) is provided via unbiased estimators \(\grad f(x, y ; \zeta)\) and \(\grad g(x, y ; \phi)\), where:
\begin{equation}
\begin{aligned}
\mathbb{E}[\grad f(x, y ; \zeta)] &= \grad f(x, y), \\
\mathbb{E}[\grad g(x, y ; \phi)] &= \grad g(x, y),
\end{aligned}
\end{equation}
and the variances of the stochastic gradient estimators are bounded:
\begin{equation}
\begin{aligned}
\mathbb{E}\left[\left\|\grad f(x, y ; \zeta)-\grad f(x, y)\right\|_x^{2}\right] &\leq \sigma_{f}^{2}, \\
\mathbb{E}\left[\left\|\grad g(x, y ; \phi)-\grad g(x, y)\right\|_x^{2}\right] &\leq \sigma_{g}^{2},
\end{aligned}
\end{equation}
where \(\|\cdot\|_x\) denotes the norm induced by the metric at point \(x\).
\end{assumption}

\begin{assumption}[Gradient Boundedness]
\label{assumption:gradient_boundedness}
The gradients with respect to \(x\) for \(f\) and \(g\) are bounded for every \(\bar{y}\), with \(\|\grad_x f(x, \bar{y})\|\) and \(\|\grad_x g(x, \bar{y})\|\) bounded by \(l_{f, 0}\) and \(l_{g, 0}\), respectively, for all \(x\).
\end{assumption}

\begin{assumption}[Second-order Smoothness of \(f\)]
\label{assumption:f_second_order}
\(f\) is twice continuously differentiable, with its Hessian \(\text{Hess} f\) being \(l_{f, 2}\)-Lipschitz continuous in the sense over the product of the manifold's tangent spaces at \((x, y)\).
\end{assumption}

The assumptions \ref{assumption:objective_functions} through \ref{assumption:f_second_order} are necessary for guaranteeing the smoothness of \(y_{\lambda}^{*}(x)\) and the efficacy of the inner iterations throughout all outer iterations. These assumptions align the analysis with the inherent curvature and metrics of \(\mathcal{M}\) and \(\mathcal{N}\). They are essential for our proof of Theorem \ref{thm:main}.

\subsection{Computing the Hypergradient via Perturbation Analysis}

In this section, we utilize first-order perturbations in the variables \(x\) and \(y\) to derive the hyper-gradient \(\grad F(x)\) of \(F\) at \(x\). This formulation of the hyper-gradient is crucial for the proofs of the foundational lemmas and theorems that follow.

Considering an infinitesimal perturbation \(\delta v\) within the tangent space \(T_x\Mc\), we transition to a new manifold point \(x' = \Exp_x(\delta v)\). Similarly, perturbing the solution \(y^*(x)\) by \(\delta u\) in \(T_y\Nc\) leads to \(y' = \Exp_y(\delta u)\).

The first-order Taylor expansion of \(g\) around the point \((x, y^*(x))\) is expressed as:
\begin{equation}
\grad_y g(x', y^*(x)) \approx \grad_y g(x, y^*(x)) + \grad_x (\grad_y g)(x, y^*(x))[\delta v].
\end{equation}
Incorporating the perturbation \(\delta u\) in $T_y\Nc$, we refine our approximation to:
\begin{equation}
\grad_{y'} g(x', y') \approx \grad_y g(x', y^*(x)) + \Hess_{yy} g(x, y^*(x))[\delta u].
\end{equation}

To satisfy the optimality condition \(\grad_{y'} g(x', y') = 0\) for \(y'\) as the new minimizer, we establish a linkage between \(\delta u\) and \(\delta v\):
\begin{equation}
\grad_x (\grad_y g)(x', y^*(x))[\delta v] + \Hess_{yy} g(x, y^*(x))[\delta u] = 0.
\end{equation}
Solving for \(\delta u\), we invert the Hessian of \(g\) with respect to \(y\), obtaining:
\begin{equation}
[\delta u] = -(\Hess_{yy} g(x', y^*(x)))^{-1} \grad_x (\grad_y g)(x', y^*(x))[\delta v].
\end{equation}

Finally, the gradient of \(F\) at \(x\), influenced by the movements \(\delta v\) and \(\delta u\), is concisely articulated as:
\begin{equation}
\grad F(x) = \grad_x f(x, y^*(x)) - \Hess_{xy} g(x, y^*(x)) (\Hess_{yy} g(x, y^*(x)))^{-1} \grad_y f(x, y^*(x)),
\label{Fhypergradient}
\end{equation}
culminating our systematic approach to compute the hyper-gradient via perturbation analysis.

\section{Algorithm Design and Step-Size Calculations}

\subsection{Algorithm}

We devise an algorithm to find a stationary point of the bilevel problem, specifically, a point where $F(x)=f(x, y^{*}(x))$ is stationary, using gradients of $f$ and $g$. Considering the formulation $(\mathbf{P}^{\prime})$ and aiming to bypass second-order derivatives, we assess the gradient of $\mathcal{L}_{\lambda}$:
\begin{equation}
\label{eq:gradient_equations}
\begin{aligned}
& \grad_x \mathcal{L}_{\lambda}(x, y) = \grad_x f(x, y) + \lambda \left( \grad_x g(x, y) - \grad g^{*}(x) \right), \\
& \grad_y \mathcal{L}_{\lambda}(x, y) = \grad_y f(x, y) + \lambda \grad_y g(x, y).
\end{aligned}
\end{equation}
On the manifold, the gradient of $g^{*}(x)$ simplifies to $\grad g^{*}(x) = \grad_x g(x, y^{*}(x))$ due to $g$'s optimality at $y^{*}(x)$. To optimize $\mathcal{L}_{\lambda}(x, y)$, we introduce an auxiliary variable $z$, approximating $y^{*}(x)$, and consider an alternative bilevel formulation (\ref{eq:P}) with the outer-level objective $\mathcal{L}_{\lambda}(x', z)$, where $x'=(x, y)$ is the outer variable, and $z$ is the inner variable. This modification alters $F(x)$'s landscape, introducing a bias that must be managed carefully to not affect the function $\mathcal{L}_{\lambda}$'s smoothness, which is crucial for step-size and noise variance.

To manage the bias, we explore the relation between $\mathcal{L}_{\lambda}$ and $F(x)$ through an auxiliary function $\mathcal{L}_{\lambda}^{*}$ defined as:
\begin{equation}
\label{eq:minimization}
\mathcal{L}_{\lambda}^{*}(x) := \min_{y} \mathcal{L}_{\lambda}(x, y).
\end{equation} 

For $\lambda > 2 l_{f, 1} / \mu_{g}$, $\mathcal{L}_{\lambda}(x, y)$ becomes strongly convex in $y$, ensuring a unique minimizer $y_{\lambda}^{*}(x)$:
\begin{equation}
\label{eq:argminimization}
y_{\lambda}^{*}(x) := \argmin_{y} \mathcal{L}_{\lambda}(x, y).
\end{equation}

Given $F(x) = \lim_{\lambda \to \infty} \mathcal{L}_{\lambda}^{*}(x)$ for any $x$ in $X$, $\mathcal{L}_{\lambda}^{*}(x)$ effectively approximates $F(x)$ for a sufficiently large $\lambda$. This approach is underpinned by a lemma adapted for manifolds.

\begin{algorithm}
\caption{\algo - Riemannian First-order Fast Stochastic Approximation}\label{alg:RF2SA}
\begin{algorithmic}[1]
\State \textbf{Input:} step sizes: $\{\alpha_k, \gamma_k\}$, multiplier difference sequence: $\{\delta_k\}$, inner-loop iteration count: $T$,
\State step-size ratio: $\xi$, initializations: $\lambda_0, x_0, y_0, z_0$
\State \text{For} $k = 0$ to $K-1$ \text{do}
    \State \hspace{\algorithmicindent} $z_{k,0} \gets z_{k}$, $y_{k,0} \gets y_{k}$
    \State \hspace{\algorithmicindent} \text{For} $t = 0$ to $T-1$ \text{do}
        \State \hspace{\algorithmicindent}\hspace{\algorithmicindent} $z_{k,t+1} \gets \Exp_{z_{k,t}}(- \gamma_k h_{gz}^{k,t})$
        \State \hspace{\algorithmicindent}\hspace{\algorithmicindent} $y_{k,t+1} \gets \Exp_{y_{k,t}}(- \alpha_k (h_{fy}^{k,t} + \lambda_k h_{gy}^{k,t}))$
    \State \hspace{\algorithmicindent} \text{EndFor}
    \State \hspace{\algorithmicindent} $z_{k+1} \gets z_{k,T}$, $y_{k+1} \gets y_{k,T}$
    \State \hspace{\algorithmicindent} $x_{k+1} \gets \Exp_{x_k}(- \xi \alpha_k (h_{fx}^k + \lambda_k (h_{gxy}^k - h_{gxz}^k)))$
    \State \hspace{\algorithmicindent} $\lambda_{k+1} \gets \lambda_k + \delta_k$
\State \text{EndFor}
\end{algorithmic}
\end{algorithm}

\begin{lemma}
\label{lemma:riemannian_gradient_bound}
For any $x \in X$ and $\lambda \geq 2 l_{f, 1} / \mu_{g}$, the gradient of $\mathcal{L}_{\lambda}^{*}(x)$ is
\begin{equation}
\grad_{x} \mathcal{L}_{\lambda}(x, y_{\lambda}(x)) = \grad_{x} f(x, y_{\lambda}^{}(x)) + \lambda \left( \grad_{x} g(x, y_{\lambda}^{}(x)) - \grad_{x} g(x, y^{}(x)) \right).
\end{equation}
Furthermore, the norm of the difference between the gradients of $F(x)$ and $\mathcal{L}_{\lambda}^{*}(x)$ is bounded by
\begin{equation}
|\grad F(x) - \grad \mathcal{L}_{\lambda}^{*}(x)| \leq \frac{C_{\lambda}}{\lambda}.
\end{equation}
where $C_{\lambda} := \frac{4 l_{f, 0} l_{g, 1}}{\mu_{g}^2}\left(l_{f, 1} + \frac{2 l_{f, 0} l_{g, 2}}{\mu_{g}}\right)$.
\end{lemma}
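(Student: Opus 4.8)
The plan is to establish the two claims in order: first the Danskin/envelope identity for $\grad\mathcal{L}_\lambda^*$, then the $O(1/\lambda)$ comparison of $\grad\mathcal{L}_\lambda^*$ with the true hypergradient $\grad F$ of \eqref{Fhypergradient}.

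\textbf{Envelope identity.} Since $\lambda \geq 2l_{f,1}/\mu_g$, the map $\mathcal{L}_\lambda(x,\cdot)$ is $(\lambda\mu_g - l_{f,1})$-strongly convex, so the minimizer $y_\lambda^*(x)$ in \eqref{eq:argminimization} is unique and, by the smoothness assumptions, depends smoothly on $x$. I differentiate $\mathcal{L}_\lambda^*(x) = \mathcal{L}_\lambda(x, y_\lambda^*(x))$; the chain-rule term carrying the derivative of $y_\lambda^*$ contracts against $\grad_y\mathcal{L}_\lambda(x,y_\lambda^*(x)) = 0$ and drops out, leaving $\grad\mathcal{L}_\lambda^*(x) = \grad_x\mathcal{L}_\lambda(x, y_\lambda^*(x))$. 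Substituting \eqref{eq:gradient_equations} and the inner-problem envelope identity $\grad g^*(x) = \grad_x g(x, y^*(x))$ gives the stated formula. This step is routine once strong convexity secures a well-defined smooth $y_\lambda^*$.

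\textbf{Proximity of the two minimizers.} The crux is controlling $d_{\Nc}(y_\lambda^*(x), y^*(x))$. The optimality conditions $\grad_y g(x, y^*(x)) = 0$ and $\grad_y f(x, y_\lambda^*(x)) + \lambda\grad_y g(x, y_\lambda^*(x)) = 0$ give $\grad_y g(x, y_\lambda^*(x)) = -\tfrac1\lambda\grad_y f(x, y_\lambda^*(x))$. Invoking $\mu_g$-strong convexity of $g(x,\cdot)$ (Assumption~\ref{assumption:lower_level}) together with $\|\grad_y f\|\le l_{f,0}$ (Assumption~\ref{assumption:objective_functions}) yields $\mu_g\, d_{\Nc}(y_\lambda^*, y^*) \le \|\grad_y g(x,y_\lambda^*)\| \le l_{f,0}/\lambda$, hence $d_{\Nc}(y_\lambda^*, y^*) \le l_{f,0}/(\mu_g\lambda)$.

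\textbf{Comparison of gradients.} Set $\Delta := \Exp_{y^*}^{-1}(y_\lambda^*)\in T_{y^*}\Nc$, so $\|\Delta\| = d_{\Nc}(y_\lambda^*, y^*)$. Using the Riemannian fundamental theorem of calculus along the geodesic from $y^*$ to $y_\lambda^*$ and the Hessian-Lipschitz Assumption~\ref{assumption:lower_level}, I expand, with parallel transport implicit, $\grad_y g(x,y_\lambda^*) = \Hess_{yy}g(x,y^*)[\Delta] + R_1$ and $\grad_x g(x,y_\lambda^*) - \grad_x g(x,y^*) = \Hess_{xy}g(x,y^*)[\Delta] + R_2$, where $\|R_i\| \le \tfrac12 l_{g,2}\|\Delta\|^2$. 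Combining the first expansion with Step 2 gives $\lambda\Delta = -(\Hess_{yy}g(x,y^*))^{-1}\grad_y f(x,y_\lambda^*) + \lambda(\Hess_{yy}g)^{-1}R_1$; substituting into $\lambda(\grad_x g(x,y_\lambda^*) - \grad_x g(x,y^*))$ and into the envelope formula produces
\[
\grad\mathcal{L}_\lambda^*(x) = \grad_x f(x,y_\lambda^*) - \Hess_{xy}g(x,y^*)(\Hess_{yy}g(x,y^*))^{-1}\grad_y f(x,y_\lambda^*) + E,
\]
with $E$ collecting the remainders. Comparing against \eqref{Fhypergradient}, the difference $\grad F(x) - \grad\mathcal{L}_\lambda^*(x)$ splits into (i) the changes $\grad_x f(x,y^*) - \grad_x f(x,y_\lambda^*)$ and the $\grad_y f$ factor shift, each bounded through $l_{f,1}$-smoothness, $\|\Hess_{xy}g\|\le l_{g,1}$, $\|(\Hess_{yy}g)^{-1}\|\le 1/\mu_g$, and $\|\Delta\|\le l_{f,0}/(\mu_g\lambda)$, giving the $\tfrac{4 l_{f,0}l_{g,1}l_{f,1}}{\mu_g^2\lambda}$ term; and (ii) the remainder $E$, bounded by $l_{g,2}\|\Delta\|^2$ scaled by $\lambda$ (using $\lambda\|\Delta\|^2 \le l_{f,0}^2/(\mu_g^2\lambda)$), giving the $\tfrac{8 l_{f,0}^2 l_{g,1}l_{g,2}}{\mu_g^3\lambda}$ term. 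Summing reproduces $C_\lambda/\lambda$.

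\textbf{Main obstacle.} The delicate part is Step 3: on a curved manifold the first-order comparison of gradients based at $y^*$ and $y_\lambda^*$ must be carried out via parallel transport along the connecting geodesic, and the second-order remainders demand the Hessian-Lipschitz estimate rather than a naive Euclidean Taylor bound to absorb curvature-induced distortion. The bookkeeping that matters is ensuring the factor $\lambda$ multiplying the $O(\|\Delta\|^2)$ remainder still leaves an $O(1/\lambda)$ contribution and that the accumulated numerical constants assemble exactly into $C_\lambda$.
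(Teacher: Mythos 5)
Your proof is correct and the constants close with room to spare, but it is organized differently from the paper's. The paper proves the bound in three steps: the same envelope identity; a general-purpose comparison lemma (Lemma \ref{A.2}) that bounds $\grad F(x)-\grad_x\mathcal{L}_\lambda(x,y)$ for an \emph{arbitrary} $y$ up to a correction term proportional to $\grad_y\mathcal{L}_\lambda(x,y)$, which is then specialized to $y=y_\lambda^*(x)$ where that correction vanishes; and the distance estimate $d_{\Nc}(y_\lambda^*(x),y^*(x))\leq 2l_{f,0}/(\lambda\mu_g)$, obtained as a limiting case of Lemma \ref{lemma:distance_bound}, whose proof runs through the $\lambda\mu_g/2$-strong convexity of $\mathcal{L}_\lambda(x,\cdot)$. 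You do two things differently. First, you derive the distance bound directly from the $\mu_g$-strong convexity of $g(x,\cdot)$ itself: since $\grad_y g(x,y_\lambda^*)=-\tfrac{1}{\lambda}\grad_y f(x,y_\lambda^*)$, gradient coercivity gives $d_{\Nc}(y_\lambda^*,y^*)\leq l_{f,0}/(\mu_g\lambda)$, which is more elementary and a factor of $2$ tighter than the paper's route. Second, in place of Lemma \ref{A.2} you substitute the optimality condition into Taylor expansions of $\grad_y g$ and $\grad_x g$ around $y^*$ (with transport along the connecting geodesic and Hessian-Lipschitz remainders), obtaining the explicit representation $\grad\mathcal{L}_\lambda^*(x)=\grad_x f(x,y_\lambda^*)-\Hess_{xy}g(x,y^*)\left(\Hess_{yy}g(x,y^*)\right)^{-1}\grad_y f(x,y_\lambda^*)+E$ with $\|E\|=O\!\left(\lambda\, l_{g,1}l_{g,2}\, d_{\Nc}(y_\lambda^*,y^*)^2/\mu_g\right)$, and compare it term-by-term with \eqref{Fhypergradient}. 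What the paper's organization buys is reusability: the general-$y$ form of Lemma \ref{A.2}, with its $\grad_y\mathcal{L}_\lambda$ correction term, quantifies the bias when $y$ only approximates $y_\lambda^*$, which is the situation the algorithm actually faces. What your route buys is a shorter, self-contained argument with slightly better constants, plus the conceptually clean statement that $\grad\mathcal{L}_\lambda^*$ \emph{is} the hypergradient formula evaluated at $y_\lambda^*$ up to an $O(1/\lambda)$ remainder. Both treatments handle the manifold subtleties (parallel transport rather than vector subtraction, and the Hessian-Lipschitz estimate in place of a naive Euclidean Taylor bound) in the same way, so there is no gap in your argument.
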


The gradient $\grad \mathcal{L}_{\lambda}^{*}(x)$ is computable with first-order derivatives of both $f$ and $g$. Thus, any first-order method locating a stationary point of $\mathcal{L}_{\lambda}^{*}(x)$ approximates the trajectory of $x$ updated with $\grad F(x)$, with a bias of $O(1/\lambda)$.

We use $\grad \mathcal{L}_{\lambda}^{*}(x)$ as a proxy for $\grad F(x)$ to produce a sequence of iterates $\{x_{k}\}$. Concurrently, we generate sequences $\{y_{k}\}$ and $\{z_{k}\}$ to approximate the solutions $y_{\lambda_{k}}^{*}(x_{k})$ and $y^{*}(x_{k})$, respectively, incrementing $\lambda_{k}$ with $k$ to ensure the bias in $\{x_{k}\}$ diminishes to zero.

Our Fully First-order Stochastic Approximation (F$^2$SA) method, adapted for the manifold, employs stochastic gradients as unbiased estimators:
\begin{equation}
\begin{aligned}
h_{g z}^{k, t} & := \grad_{y} g(x_{k}, z_{k, t} ; \phi_{z}^{k, t}), \quad h_{f y}^{k, t} := \grad_{y} f(x_{k}, y_{k, t} ; \zeta_{y}^{k, t}), \\
h_{g y}^{k, t} & := \grad_{y} g(x_{k}, y_{k, t} ; \phi_{y}^{k, t}), \quad h_{g x y}^{k} := \grad_{x} g(x_{k}, y_{k+1} ; \phi_{x y}^{k}), \\
h_{f x}^{k} & := \grad_{x} f(x_{k}, y_{k+1} ; \zeta_{x}^{k}), \quad h_{g x z}^{k} := \grad_{x} g(x_{k}, z_{k+1} ; \phi_{x z}^{k}).
\end{aligned}
\end{equation}
With \(T=1\) and an appropriate choice of \(\xi\), Algorithm \(\mathrm{RF^2SA}\) enables a fully single-loop update of all variables, tailored to the manifold's geometry. The step-size design for \(\mathrm{RF^2SA}\), as outlined in Algorithm \ref{alg:RF2SA}, adapts to this setup.

\subsection{Step-Size Design Principle}
We tailor the step-sizes for Algorithm~\ref{alg:RF2SA} to ensure convergence to an $\epsilon$-stationary point of $F$. This involves meeting several geometric conditions, considering the curvature. For instance, if $\mathcal{L}_{\lambda_{k}}$ is $\left(\lambda_{k} \mu_{g} / 2\right)$-strongly convex along $y$'s geodesics, then updating $y_{k, t}$ resembles a geodesic contraction towards $y_{\lambda, k}^{*}$, with a rate of $1 - O(\mu_{g} \beta_{k})$.

Here, $\beta_{k} = \alpha_{k} \lambda_{k}$ is the effective step-size for $y_{k}$. We simplify notation by denoting $y_{\lambda, k}^{*} = \Exp_{x_k}^{-1}(y_{\lambda_{k}}^{*}(x_{k}))$ and $y_{k}^{*} = \Exp_{x_k}^{-1}(y^{*}(x_{k}))$, where $\Exp_{x_k}^{-1}(\cdot)$ represents the unique inverse of the exponential map at $x_k$, mapping points in the manifold back to the tangent space at $x_k$.

For updating $x_{k}$, the step-size $\xi \alpha_{k}$ should decay no slower than $\Omega(1/k)$. The step-size $\beta_{k}$ is limited to $O(1/l_{g, 1})$, implying a polynomial growth in $\lambda_{k}$ with $k$.

The manifold distance $\dist(\cdot, \cdot)$ between $x_{k+1}$ and $x_{k}$ depends on several factors. Ideally, $\lambda_{k}$'s growth rate ensures $\dist(y_{k}, y_{\lambda, k}^{*})$ is roughly $\lambda_{k}^{-2}$, suggesting $\lambda_{k}$ grows inversely to $\beta_k^{1/4}$.

Efficiency in Algorithm~\ref{alg:RF2SA} relies on how quickly $y_{k}$ and $z_{k}$ can track their targets as $x_{k}$ and $\lambda_{k}$ evolve. We will explore how $y_{\lambda}^{*}(x)$ adapts to changes in $\lambda$ and $x$.

\begin{figure}
\centering
\includegraphics[width=0.5\textwidth]{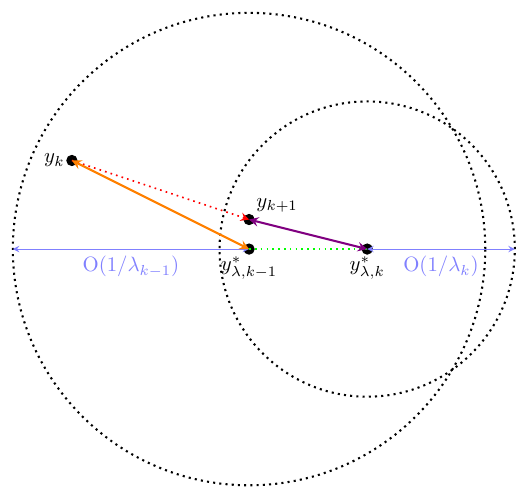}
\caption{$y_{k}$ should move faster than $y_{\lambda_{k}}^{*}(x_{k})$, remaining within an $O(1/\lambda_{k})$-ball around $y_{\lambda_{k}}^{*}(x_{k})$.}
\end{figure}

\begin{lemma}
\label{lemma:distance_bound}
For any points $x_1, x_2$ on a manifold $X$ and multipliers $\lambda_2 \geq \lambda_1 \geq 2 l_{f, 1} / \mu_g$, the distance between optimal solutions for these multipliers is bounded by
\begin{equation}
\dist(y_{\lambda_1}^*(x_1), y_{\lambda_2}^*(x_2)) \leq \frac{2(\lambda_2 - \lambda_1)}{\lambda_1 \lambda_2} \frac{l_{f, 0}}{\mu_g} + l_{\lambda, 0} \dist(x_2, x_1),
\end{equation}
with some constant $l_{\lambda, 0} \leq 3 l_{g, 1} / \mu_g$.
\end{lemma}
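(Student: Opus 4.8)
The plan is to run a standard sensitivity (implicit function) analysis of the first-order optimality condition, carried out covariantly on $\Nc$, and then integrate the resulting velocity bounds along connecting curves. The starting point is the defining relation for $y_\lambda^*(x)$, namely the stationarity condition
\[
\grad_y f(x, y_\lambda^*(x)) + \lambda \grad_y g(x, y_\lambda^*(x)) = 0,
\]
which holds for every $x$ because the $g^*(x)$ term in $\mathcal{L}_{\lambda}$ is independent of $y$ and so contributes nothing to $\grad_y \mathcal{L}_{\lambda}$. By the triangle inequality I split the target distance as
\[
\dist(y_{\lambda_1}^*(x_1), y_{\lambda_2}^*(x_2)) \leq \dist(y_{\lambda_1}^*(x_1), y_{\lambda_2}^*(x_1)) + \dist(y_{\lambda_2}^*(x_1), y_{\lambda_2}^*(x_2)),
\]
treating the variation in $\lambda$ (at fixed $x_1$) and the variation in $x$ (at fixed $\lambda_2$) separately; each term is then controlled by bounding the covariant velocity of the appropriate curve $y_\lambda^*(\cdot)$ and integrating.

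For the $\lambda$-term, I differentiate the optimality condition covariantly in $\lambda$ along the curve $\lambda \mapsto y_\lambda^*(x_1)$, which gives $\tfrac{D}{d\lambda} y_\lambda^* = -(\Hess_{yy}\mathcal{L}_{\lambda})^{-1}\,\grad_y g(x_1, y_\lambda^*(x_1))$. Two quantitative inputs then suffice: (i) since $g$ is $\mu_g$-strongly convex (Assumption~\ref{assumption:lower_level}) and $\grad_y f$ is $l_{f,1}$-Lipschitz (Assumption~\ref{assumption:objective_functions}), for $\lambda \geq 2 l_{f,1}/\mu_g$ one has $\Hess_{yy}\mathcal{L}_{\lambda} \succeq (\lambda \mu_g/2)\,I$, so its inverse has operator norm at most $2/(\lambda \mu_g)$; and (ii) the optimality condition yields $\|\grad_y g\| = \|\grad_y f\|/\lambda \leq l_{f,0}/\lambda$ by Assumption~\ref{assumption:objective_functions}(3). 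Hence $\|\tfrac{D}{d\lambda} y_\lambda^*\| \leq 2 l_{f,0}/(\lambda^2 \mu_g)$, and integrating from $\lambda_1$ to $\lambda_2$ bounds the curve length, hence the distance, by $\tfrac{2 l_{f,0}}{\mu_g}\big(\tfrac{1}{\lambda_1} - \tfrac{1}{\lambda_2}\big)$, which is exactly the first term in the claim.

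For the $x$-term, I fix $\lambda = \lambda_2$ and differentiate the optimality condition covariantly along a minimizing geodesic $\gamma$ from $x_1$ to $x_2$, obtaining $\tfrac{D}{ds} y_{\lambda_2}^*(\gamma(s)) = -(\Hess_{yy}\mathcal{L}_{\lambda_2})^{-1}\,\Hess_{xy}\mathcal{L}_{\lambda_2}[\dot\gamma(s)]$. Using the same strong-convexity floor on the Hessian and bounding the mixed operator by $\|\Hess_{xy} f\| + \lambda_2 \|\Hess_{xy} g\| \leq l_{f,1} + \lambda_2 l_{g,1}$ (the mixed second-order blocks are controlled by the gradient-Lipschitz constants of Assumptions~\ref{assumption:objective_functions}(1)--(2)), I obtain $\|\tfrac{D}{ds} y_{\lambda_2}^*\| \leq \tfrac{2 l_{f,1}}{\lambda_2 \mu_g} + \tfrac{2 l_{g,1}}{\mu_g}$. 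Since $\lambda_2 \geq 2 l_{f,1}/\mu_g$ the first summand is at most $1$, and since $\mu_g \leq l_{g,1}$ (the strong-convexity parameter never exceeds the smoothness parameter) this is bounded by $3 l_{g,1}/\mu_g =: l_{\lambda,0}$. Integrating over $\gamma$, whose length equals $\dist(x_1,x_2)$, yields the second term, and summing the two estimates completes the proof.

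I expect the main obstacle to be making the implicit-function/sensitivity step rigorous in the Riemannian category: one must verify that $y_\lambda^*(x)$ is genuinely smooth (so the covariant velocities exist), justify the covariant differentiation of the vector-field-valued stationarity condition—where $\Hess_{yy}\mathcal{L}_{\lambda}$ and the mixed operator $\Hess_{xy}\mathcal{L}_{\lambda}$ appear with their curvature-correct definitions—and ensure that integrating the pointwise velocity bounds along the connecting curves legitimately controls the Riemannian distance (using that $\dist$ is at most the length of any connecting curve, with parallel transport absorbed into the covariant derivatives). Uniform invertibility of $\Hess_{yy}\mathcal{L}_{\lambda}$, guaranteed by the strong-convexity floor $\lambda \mu_g/2$, is precisely what makes the Riemannian implicit function theorem applicable and keeps all constants uniform in the base point.
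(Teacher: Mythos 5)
Your proposal is correct, and the constants come out exactly right, but it takes a genuinely different route from the paper. The paper never differentiates the solution map: it evaluates $\grad_y \mathcal{L}_{\lambda_2}(x_2,\cdot)$ at the ``wrong'' point $y_{\lambda_1}^*(x_1)$, bounds that residual gradient by $(l_{f,1}+\lambda_2 l_{g,1})\,\dist(x_1,x_2) + \frac{\lambda_2-\lambda_1}{\lambda_1}l_{f,0}$ using parallel-transport Lipschitzness of $\grad_y f$, $\grad_y g$ in $x$ together with the optimality condition at $(x_1,\lambda_1)$ (which gives $\|\grad_y g(x_1,y_{\lambda_1}^*(x_1))\|\le l_{f,0}/\lambda_1$), and then converts the gradient bound to a distance bound in one shot via geodesic strong convexity (coercivity): $\tfrac{\lambda_2\mu_g}{2}\,\dist(y_{\lambda_1}^*(x_1),y_{\lambda_2}^*(x_2)) \le \|\grad_y\mathcal{L}_{\lambda_2}(x_2,y_{\lambda_1}^*(x_1))\|$. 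Your route instead splits by the triangle inequality, differentiates the stationarity condition covariantly in $\lambda$ and in $x$ (a Riemannian implicit-function/sensitivity argument), and integrates the velocity bounds $\|\tfrac{D}{d\lambda}y_\lambda^*\|\le 2l_{f,0}/(\lambda^2\mu_g)$ and $\|\tfrac{D}{ds}y_{\lambda_2}^*\|\le 2(l_{f,1}+\lambda_2 l_{g,1})/(\lambda_2\mu_g)$ along connecting curves. What the paper's argument buys is economy of hypotheses: it needs only gradient Lipschitzness and strong convexity, and in particular never requires $y_\lambda^*(x)$ to be differentiable, sidestepping the Riemannian implicit function theorem you correctly identify as the main technical burden. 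What your argument buys is sharper local information -- pointwise derivative bounds on the solution map rather than only an endpoint distance estimate -- and it is fully legitimate under the paper's standing assumptions, since Assumptions~\ref{assumption:lower_level} and~\ref{assumption:f_second_order} supply the $C^2$ regularity and the uniform positive-definiteness $\Hess_{yy}\mathcal{L}_\lambda \succeq (\lambda\mu_g/2)I$ makes the implicit function theorem applicable; indeed the paper itself deploys exactly this implicit-differentiation device, just later, in Lemma~\ref{A.3} to establish smoothness of $y_\lambda^*$.
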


In the algorithmic setting, it's crucial that $y_k$'s update moves it sufficiently close to the current target $y_{\lambda, k}^*$ each iteration, surpassing the target's movement due to updates in $x_k$ and $\lambda_k$. Ideally, in expectation,
\begin{equation}
\dist(y_{k+1}, y_{\lambda, k}^*) < \dist(y_k, y_{\lambda, k-1}^*).
\end{equation}
Considering the geometry, the squared distance $\dist(y_{k+1}, y_{\lambda, k}^*)^2$ contracts with $T$-steps of $1 - O(\mu_g \beta_k)$, starting from $y_k$, leading to the requirement
\begin{equation}
(1 - O(T \mu_g \beta_k)) \dist(y_k, y_{\lambda, k}^*)^2 < \dist(y_k, y_{\lambda, k-1}^*)^2.
\end{equation}
Utilizing Lemma \ref{lemma:distance_bound} and the geometry, the minimal condition is
\begin{equation}
\begin{aligned}
\dist(y_{\lambda, k-1}, y_{\lambda, k}) &\leq \left(\frac{l_{f, 0}}{\mu_g}\right) \left(\frac{\delta_k}{\lambda_k^2}\right) + l_{\lambda, 0} \dist(x_k, x_{k-1}) \
&\leq T \mu_g \beta_k \dist(y_k, y_{\lambda, k-1}^*).
\end{aligned}
\end{equation}
The rate at which $\dist(y_{k+1}, y_{\lambda, k}^*)$ decreases must surpass $\lambda_k^{-1}$, maintaining controlled bias in $x_k$ updates. Additionally, $\dist(x_k, x_{k-1})$ should align with $\xi \beta_k \dist(y_k, y_{\lambda, k-1}^*)$.

Two key conditions emerge:
\begin{equation}
\frac{\delta_k}{\lambda_k} \leq O_P(1) \cdot \beta_k, \quad \frac{\xi}{T} < O_P(1),
\end{equation}
where $O_P(1)$ denotes constants dependent on the problem instance. If $\lambda_k$ increases polynomially, then $\delta_k / \lambda_k = O(1/k)$, satisfying the first condition if $\beta_k = \Omega(1/k)$. The second condition concerns the inner iterations $T$ needed per outer iteration, allowing for a single-loop algorithm with $T=1$ and an adequately small $\xi$, or setting $\xi=1$ and adjusting $T>1$ for specific instance parameters.

\section{Non-Asymptotic Convergence Analysis}

We discussed a number of assumptions on the regularity of the optimization objective and the underlying manifold in Section \ref{ss:assumptions}. We now present our main convergence result in Theorem \ref{thm:main}. Corollary \ref{cor:convergence-rates} provides explicit iteration complexity bounds for each setting, further elucidating the efficiency and applicability of our results in various contexts.

\subsection*{Convergence Analysis Results}

\label{ss:thm}
\begin{theorem}[Alexandrov Space Version]
\label{thm:main}
Given that the assumptions from Section~\ref{ss:assumptions} hold within an Alexandrov space with curvature bounded by \(\kappa\), analogous to manifolds \(\Mc, \Nc\), and assuming appropriate selection of parameters and step-sizes such that \(\lambda_{0} \geq 2 l_{f, 1} / \mu_{g}\) and

\begin{subequations}
\renewcommand{\theequation}{\theparentequation\alph{equation}} 
    \begin{align}
        \beta_{k} &\leq \gamma_{k} \leq \min \left(\frac{1}{4 l_{g, 1}}, \frac{1}{4 T \mu_{g}}\right),
        \alpha_{k} \leq \min \left(\frac{1}{8 l_{f, 1}}, \frac{1}{2 \xi l_{F, 1}}\right), \label{thm2.1}
    \end{align}
    \begin{align}
        \frac{\xi}{T} &< c_{\xi} \mu_{g} \cdot \max \left(l_{g, 1} l_{*, 0}^{2}, l_{*, 1} \sqrt{\Mc}\right)^{-1}, \frac{\delta_{k}}{\lambda_{k}} \leq \frac{T \mu_{g} \beta_{k}}{16}. \label{thm2.2}
    \end{align}
\end{subequations}

for all \(k \geq 0\), with \(c_{\xi}\) being a suitably chosen constant. Then, over \(K \geq 1\) iterations within the Alexandrov space—a generalization allowing curvature bounds without necessitating smoothness—the outcomes for Algorithm~\ref{alg:RF2SA} adhere to
\begin{equation}
\sum_{k=0}^{K-1} \xi \alpha_{k} \mathbb{E}\left[\left\| \grad F\left(x_{k}\right)\right\|_{x_{k}}^{2}\right] \leq O_{P}(1) \cdot \sum_{k} \xi \alpha_{k} \lambda_{k}^{-2} + O_{P}\left(\sigma_{f}^{2}\right) \cdot \sum_{k} \alpha_{k}^{2} \lambda_{k} + O_{P}\left(\sigma_{g}^{2}\right) \cdot \sum_{k} \gamma_{k}^{2} \lambda_{k} + O_{P}(1),
\label{eq:main-inequality}
\end{equation}
where \(\grad F(x)\) symbolizes a generalized notion of gradient in Alexandrov spaces, and \(\|\cdot\|_{x_{k}}\) signifies the distance measure at point \(x_{k}\), aligning with the space's metric structure.
\end{theorem}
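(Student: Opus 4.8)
The plan is to analyze the surrogate objective $\mathcal{L}_{\lambda_k}^{*}(x_k)$ rather than $F$ directly, since Lemma~\ref{lemma:riemannian_gradient_bound} certifies that $\grad\mathcal{L}_{\lambda}^{*}$ is a first-order-computable proxy for $\grad F$ with bias $O(1/\lambda)$, and its $x$-component coincides with $\grad_x\mathcal{L}_{\lambda}(x,y_{\lambda}^{*}(x))$ by the envelope identity. I would introduce a Lyapunov potential
\[
\Phi_k := \mathcal{L}_{\lambda_k}^{*}(x_k) - F^{*} + c_1\,\lambda_k\,\dist\!\big(y_k, y_{\lambda_k}^{*}(x_k)\big)^2 + c_2\,\lambda_k\,\dist\!\big(z_k, y^{*}(x_k)\big)^2,
\]
with weights $c_1,c_2$ fixed later, establish a one-step estimate of the form $\mathbb{E}[\Phi_{k+1}\mid\mathcal{F}_k]\le\Phi_k-\xi\alpha_k\|\grad\mathcal{L}_{\lambda_k}^{*}(x_k)\|_{x_k}^2+(\text{error}_k)$, and telescope over $k=0,\dots,K-1$. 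Three ingredients feed into this estimate: an outer descent step for $\mathcal{L}^{*}$, and two geodesic contraction recursions for the trackers $y_k$ and $z_k$.

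For the outer step I would apply the Alexandrov-space descent inequality to $\mathcal{L}_{\lambda_k}^{*}$ along $x_{k+1}=\Exp_{x_k}(-\xi\alpha_k\hat d_k)$, where $\hat d_k=h_{fx}^{k}+\lambda_k(h_{gxy}^{k}-h_{gxz}^{k})$, using the smoothness constant $l_{F,1}$ from the step-size condition \eqref{thm2.1}. I would then split $\hat d_k$ into (i) its conditional mean, which equals $\grad\mathcal{L}_{\lambda_k}^{*}(x_k)$ by Lemma~\ref{lemma:riemannian_gradient_bound} once $y_{k+1},z_{k+1}$ are replaced by the true optima; (ii) a deterministic tracking error controlled by $\dist(y_{k+1},y_{\lambda_k}^{*})$ and $\dist(z_{k+1},y^{*})$ after parallel-transporting the relevant gradients to $T_{x_k}\Mc$; and (iii) zero-mean noise. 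The $\lambda$-increment $\mathcal{L}_{\lambda_{k+1}}^{*}(x_{k+1})-\mathcal{L}_{\lambda_k}^{*}(x_{k+1})$ is absorbed using the Lipschitz dependence of $\mathcal{L}_{\lambda}^{*}$ on $\lambda$ together with $\delta_k$. For the inner recursions, geodesic $(\lambda_k\mu_g/2)$-strong convexity of $\mathcal{L}_{\lambda_k}$ in $y$ makes the $T$-step update contract $\dist(\cdot,y_{\lambda_k}^{*})^2$ by $1-O(T\mu_g\beta_k)$, up to per-step noise and a target drift bounded through Lemma~\ref{lemma:distance_bound} by $l_{\lambda,0}\dist(x_k,x_{k+1})+O(\delta_k/\lambda_k^2)$; the identical argument under $\mu_g$-strong convexity of $g$ yields the $z_k$ recursion for tracking $y^{*}(x_k)$.

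To combine, I would pick $c_1,c_2$ so that the positive tracking terms generated by the outer step are dominated by the contraction gains of the inner recursions; the conditions \eqref{thm2.1}--\eqref{thm2.2} are exactly what guarantee this, since $\delta_k/\lambda_k\le T\mu_g\beta_k/16$ controls the $\lambda$-drift of the targets and the small-$\xi/T$ bound controls the feedback from $\dist(x_k,x_{k+1})$. Telescoping then gives $\sum_k\xi\alpha_k\mathbb{E}\|\grad\mathcal{L}_{\lambda_k}^{*}(x_k)\|_{x_k}^2\le\Phi_0+(\text{error})$; passing to $\grad F$ via $\|\grad F(x_k)\|_{x_k}^2\le 2\|\grad\mathcal{L}_{\lambda_k}^{*}(x_k)\|_{x_k}^2+2C_{\lambda}^2\lambda_k^{-2}$ from Lemma~\ref{lemma:riemannian_gradient_bound} produces the $\sum_k\xi\alpha_k\lambda_k^{-2}$ term, while the accumulated stochastic terms, after accounting for the $\lambda_k$-amplification of the $g$-gradient noise in the updates and the potential weights, collect into the $\sigma_f^2\sum_k\alpha_k^2\lambda_k$ and $\sigma_g^2\sum_k\gamma_k^2\lambda_k$ contributions.

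I expect the main obstacle to be the curvature bookkeeping together with the circular coupling of the three recursions. In the Alexandrov setting both the descent inequality and the strong-convexity contraction rest on comparison-triangle (law-of-cosines) estimates governed by the bound $\kappa$, and parallel transport distorts precisely the gradient differences appearing in the tracking-error decomposition; keeping all such distortion factors uniformly $O_P(1)$ is delicate. More fundamentally, the target drift in the $y_k$ recursion depends on $\dist(x_k,x_{k+1})$, which is itself bounded by the $x$-update magnitude, which depends on the tracking errors being estimated — so the three recursions must be closed simultaneously rather than in sequence. Breaking this circularity is the role of the small-$\xi/T$ condition, and exhibiting a single choice of potential weights that closes the loop uniformly in $k$ is the technical crux of the argument.
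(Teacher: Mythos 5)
Your overall architecture coincides with the paper's: a Lyapunov potential combining an objective-gap term with $\lambda_k$-weighted squared tracking distances, one outer descent step plus two inner geodesic contraction recursions, Alexandrov comparison (law-of-cosines) estimates for the curvature bookkeeping, and the conditions \eqref{thm2.1}--\eqref{thm2.2} to break the circular coupling. Building the leading term from $\mathcal{L}^*_{\lambda_k}(x_k)$ rather than $F(x_k)$ (the paper's choice in \eqref{eq:4}) is a legitimate variant: it defers the $C_\lambda/\lambda_k$ bias to the final conversion step, at the cost of (i) needing a uniform-in-$\lambda$ smoothness lemma for $\mathcal{L}^*_\lambda$ — this is \emph{not} the constant $l_{F,1}$ you invoke, which is the smoothness of $F$ from Lemma \ref{A.1}, and would require its own (fillable) proof — and (ii) controlling the monotone increment $\mathcal{L}^*_{\lambda_{k+1}}-\mathcal{L}^*_{\lambda_k}\le \delta_k\bigl(g(x,y^*_{\lambda_k})-g^*(x)\bigr)=O(\delta_k/\lambda_k^2)$, which you note and which telescopes to $O_P(1)$.

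The genuine gap is in your treatment of the target drift in the inner recursions. You bound the drift of $y^*_{\lambda_k}(x_k)$ only through the Lipschitz estimate of Lemma \ref{lemma:distance_bound}, namely $l_{\lambda,0}\dist(x_k,x_{k+1})+O(\delta_k/\lambda_k^2)$, and must then absorb the cross term $2\langle v_k,\text{drift}\rangle$ by Young's inequality with parameter $\eta_k\asymp T\mu_g\beta_k$ (this size is forced, since $\eta_k\|v_k\|^2$ has to be dominated by the contraction gain). But $\dist(x_k,x_{k+1})^2$ carries the $x$-update noise $\xi^2(\alpha_k^2\sigma_f^2+\beta_k^2\sigma_g^2)$, so the Young remainder, after multiplying by the potential weight $\lambda_k l_{g,1}$ and using $\beta_k=\alpha_k\lambda_k$, contributes
\[
\lambda_k l_{g,1}\cdot\frac{l_{\lambda,0}^2}{\eta_k}\,\xi^2\alpha_k^2\sigma_f^2
\;\asymp\;\frac{l_{g,1}l_{\lambda,0}^2\xi^2}{T\mu_g}\,\alpha_k\,\sigma_f^2 ,
\]
which sums to $\sigma_f^2\sum_k\alpha_k$ rather than $\sigma_f^2\sum_k\alpha_k^2\lambda_k$; dividing by $\sum_k\xi\alpha_k$ leaves a non-vanishing $O(\sigma_f^2\,\xi/T)$ floor, so \eqref{eq:main-inequality} (and any convergence in the noisy regimes) does not follow from your plan. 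The missing idea is exactly the paper's Lemmas \ref{A.3}, \ref{A.5}, and \ref{A.6}: use the \emph{smoothness}, not merely the Lipschitzness, of the solution maps $y^*(\cdot)$ and $y^*_\lambda(\cdot)$, expanding the drift as $\nabla y^*_{\lambda_k}(x_k)\bigl[\Exp_{x_k}^{-1}(x_{k+1})\bigr]$ plus a remainder of size $\tfrac{l_{*,1}}{2}\dist(x_k,x_{k+1})^2$. Because the update noise is zero-mean, it vanishes from the linear term in conditional expectation (only $q_k^x$ survives there, and that is cancelled by the $-\tfrac{\xi\alpha_k}{4}\|q_k^x\|^2$ term from the outer step under the small-$\xi/T$ condition), so the noise variance enters with coefficient $O(\xi^2)$ instead of $O(\xi^2/\eta_k)$, yielding the claimed $\sigma_f^2\sum_k\alpha_k^2\lambda_k$ and $\sigma_g^2\sum_k\gamma_k^2\lambda_k$ terms. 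This is precisely why $l_{*,1}$ (hence the second-order constants $l_{g,2},l_{f,2}$) appears in the step-size condition \eqref{thm2.2}; your proposal never uses $l_{*,1}$, which is the symptom of the gap.
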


The proof of Theorem~\ref{thm:main} is deferred to Appendix B. Therein we also explain why the effect of the curvature $\kappa$ is negligible on the final inequality \ref{eq:main-inequality}.

Our analysis examines the expected decrease of the potential function $\mathbb{V}_{k}$, defined as
\begin{equation}
\mathbb{V}_{k} := \left(F\left(x_{k}\right) - F^{*}\right) + l_{g, 1} \lambda_{k} \dist\left(y_{k}, y_{\lambda_{k}}^{*}\left(x_{k}\right)\right)^{2} + \frac{\lambda_{k} l_{g, 1}}{2} \dist\left(z_{k}, y^{*}\left(x_{k}\right)\right)^{2},\label{eq:4}
\end{equation}
where \(F^{*}\) is the minimum value of \(F\), and \(y_{\lambda}^{*}\) and \(y^{*}\) correspond to solutions. Monitoring the distance between \(y_{k}\) and \(y_{\lambda_{k}}^{*}(x_{k})\) is essential for computing the true gradient of \(F\) at \(x_{k}\) using only gradients. The proof will also show that the correct scaling factor for these errors is proportional to \(\lambda_{k}\).

For step-size design, we maintain conditions similar to \ref{thm2.1} for gradient-based methods. The conditions \ref{thm2.2} address the double-loop nature of the problem. Aligning with the step-size design rule (3), we propose:
\begin{equation}
\begin{split}
T &= \max \left(32, \left(c_{\xi} \mu_{g}\right)^{-1} \max \left(l_{g, 1} l_{*, 0}^{2}, \sqrt{M} l_{*, 1}\right)\right), \\
\xi &= 1, \quad \alpha_{k} = \frac{c_{\alpha}}{(k + k_{0})^{a}}, \quad \gamma_{k} = \frac{c_{\gamma}}{(k + k_{0})^{c}},
\end{split}
\label{eq:hyperparameters}
\end{equation}
and for the Lagrange multiplier increase sequence \(\{\delta_{k}\}\),
\begin{equation}
\delta_{k} = \min \left(\frac{T \mu_{g}}{16} \alpha_{k} \lambda_{k}^{2}, \frac{\gamma_{k}}{2 \alpha_{k}} - \lambda_{k}\right).
\label{eq:delta_k}
\end{equation}
Rate constants \(a, c \in [0,1]\) with \(a \geq c\), the initial value of the Lagrange multiplier \(\lambda_{0}\), and constants for the context are established as:
\begin{equation}
\begin{split}
k_{0} \geq \frac{4}{\mu_{g}} \max \left(\frac{\xi l_{F, 1}}{2}, T l_{g, 1}, l_{f, 1}\right), \lambda_{0} \geq \frac{2 l_{f, 1}}{\mu_{g}}, \\
c_{\gamma} = \frac{1}{\mu_{g} k_{0}^{1-c}}, \quad c_{\alpha} = \frac{1}{2 \lambda_{0} \mu_{g} k_{0}^{1-a}}.
\end{split}
\label{eq:constants}
\end{equation}
These specifications streamline convergence rate analysis, with the framework accommodating various other choices that comply with conditions \ref{thm2.1} and \ref{thm2.2}, facilitating the delineation of the convergence rate across different stochastic noise regimes.

In the following corollary, we present a more interpretable version of Theorem \ref{thm:main}, in terms of the iteration complexity guarantees under different settings. This interpretation allows for a clearer understanding of how our theoretical findings translate into practical implications for convergence rates in various scenarios.

\begin{corollary}\label{cor:convergence-rates}
Assume the stipulations of Theorem~\ref{thm:main} are upheld, with step-sizes delineated as in equations \ref{eq:hyperparameters}, \ref{eq:delta_k}, and \ref{eq:constants}. Let \( R \) signify a random variable uniformly distributed over \( \{0, \ldots, K-1\} \). Under these premises, after \( K \) iterations, the ensuing convergence outcomes are derived:

\begin{enumerate}[label=(\alph*)]
    \item\label{cor:conv-rates-a} In the presence of stochastic noise in both objectives \( f \) and \( g \) (\( \sigma_{f}^{2}, \sigma_{g}^{2} > 0 \)), setting \( a = 5/7 \) and \( c = 4/7 \), we achieve a convergence rate of \( \mathbb{E}\left[\| \grad F(x_{R}) \|^{2}\right] \asymp \frac{\log K}{K^{2/7}} \).
    
    \item\label{cor:conv-rates-b} If stochastic noise is solely in \( f \) (\( \sigma_{f}^{2} > 0 \), \( \sigma_{g}^{2} = 0 \)), setting \( a = 3/5 \) and \( c = 2/5 \), we attain \( \mathbb{E}\left[\| \grad F(x_{R}) \|^{2}\right] \asymp \frac{\log K}{K^{2/5}} \).
    
    \item\label{cor:conv-rates-c} In scenarios with exact gradients (\( \sigma_{f}^{2} = \sigma_{g}^{2} = 0 \)), appointing \( a = 1/3 \) and \( c = 0 \), it follows that \( \| \grad F(x_{K}) \|^{2} \asymp \frac{\log K}{K^{2/3}} \).
\end{enumerate}

These findings illustrate that convergence rates improve when stochastic noise affects fewer components of the problem. Specifically, the rate improves from \( O(k^{-2/7}) \) to \( O(k^{-2/5}) \) with noise only in \( f \), and to \( O(k^{-2/3}) \) in fully deterministic contexts. This compares to the \( O(k^{-1}) \) rates that can be obtained by second-order methods as in \cite{li2024riemannian, han2024framework}.
\end{corollary}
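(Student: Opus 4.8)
The plan is to treat Corollary~\ref{cor:convergence-rates} as a direct consequence of the master inequality~\eqref{eq:main-inequality}: substitute the polynomial schedules from~\eqref{eq:hyperparameters}--\eqref{eq:constants}, evaluate the resulting $p$-series, and normalize. First I would pin down the growth rate of the multiplier sequence $\{\lambda_k\}$, since every term in~\eqref{eq:main-inequality} depends on it. From the update $\lambda_{k+1} = \lambda_k + \delta_k$ with $\delta_k$ as in~\eqref{eq:delta_k}, the first branch $\frac{T\mu_g}{16}\alpha_k\lambda_k^2$ is Riccati-type and would drive $\lambda_k$ to blow up in finite $k$, so the binding constraint is the second branch, which caps $\lambda_{k+1} \le \frac{\gamma_k}{2\alpha_k}$. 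Since $\frac{\gamma_k}{2\alpha_k} = \frac{c_\gamma}{2c_\alpha}(k+k_0)^{a-c}$ and $a \ge c$, this yields $\lambda_k \asymp k^{a-c}$; I would then verify this is consistent with $\beta_k = \alpha_k\lambda_k \le \gamma_k$ required by~\ref{thm2.1}, so that all step-size hypotheses of Theorem~\ref{thm:main} hold.

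With $\alpha_k \asymp k^{-a}$, $\gamma_k \asymp k^{-c}$, and $\lambda_k \asymp k^{a-c}$ in hand, the second step is to reduce each of the four contributions in~\eqref{eq:main-inequality} to a single $p$-series. The bias term gives $\sum_k \alpha_k\lambda_k^{-2} \asymp \sum_k k^{-(3a-2c)}$; the $f$-noise term gives $\sum_k \alpha_k^2\lambda_k \asymp \sum_k k^{-(a+c)}$; the $g$-noise term gives $\sum_k \gamma_k^2\lambda_k \asymp \sum_k k^{-(3c-a)}$; and the normalizing weight is $\sum_k \xi\alpha_k \asymp K^{1-a}$. I would then apply the elementary estimate $\sum_{k=1}^K k^{-p} \asymp K^{1-p}$ for $p<1$, $\asymp \log K$ for $p=1$, and $\asymp O(1)$ for $p>1$ to each of these sums.

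The third step is to divide both sides of~\eqref{eq:main-inequality} by $\sum_k \xi\alpha_k$; the left-hand side becomes the $\alpha_k$-weighted average of $\mathbb{E}[\|\grad F(x_k)\|^2]$, which equals $\mathbb{E}[\|\grad F(x_R)\|^2]$ for $R$ drawn with probability proportional to $\alpha_k$ (comparable to uniform sampling, since $\alpha_k$ varies only polynomially). It then remains to insert each prescribed exponent pair. For~\ref{cor:conv-rates-a}, $a=5/7,\,c=4/7$ make both $3a-2c=1$ and $3c-a=1$, so the bias and $g$-noise sums are each $\asymp\log K$ while the $f$-noise sum ($a+c=9/7>1$) is $O(1)$; dividing by $K^{1-a}=K^{2/7}$ gives $\frac{\log K}{K^{2/7}}$. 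For~\ref{cor:conv-rates-b} with $\sigma_g=0$, $a=3/5,\,c=2/5$ make $3a-2c=1$ and $a+c=1$, so both surviving sums are $\asymp\log K$ and dividing by $K^{2/5}$ gives $\frac{\log K}{K^{2/5}}$. For~\ref{cor:conv-rates-c} with $\sigma_f=\sigma_g=0$, only the bias sum survives; $a=1/3,\,c=0$ make $3a-2c=1$, so it is $\asymp\log K$, and dividing by $K^{2/3}$ gives $\frac{\log K}{K^{2/3}}$.

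I expect the main obstacle to be Step~1: rigorously establishing the two-sided bound $\lambda_k \asymp k^{a-c}$ from the $\min$-defined recursion~\eqref{eq:delta_k}, in particular showing that the cap $\frac{\gamma_k}{2\alpha_k}$ is the active constraint for all large $k$ and that $\lambda_k$ actually saturates it rather than lagging behind at a slower rate, and checking that the constants $\frac{c_\gamma}{2c_\alpha}$ and $\lambda_0$ keep $\{\lambda_k\}$ increasing and trapped between matching polynomial envelopes. Once $\lambda_k \asymp k^{a-c}$ is secured, the remaining steps are routine $p$-series bookkeeping, and the particular exponent choices are recovered by forcing the dominant bias and noise sums to sit exactly at the $p=1$ logarithmic threshold while keeping the normalizer $K^{1-a}$ as large as conditions~\ref{thm2.1} and~\ref{thm2.2} permit.
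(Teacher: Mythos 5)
Your proposal is correct and follows essentially the same route as the paper's own proof: pin down $\lambda_k = \gamma_k/(2\alpha_k) \asymp (k+k_0)^{a-c}$ from the schedule \eqref{eq:delta_k}, substitute into \eqref{eq:main-inequality} to obtain the three $p$-series with exponents $3a-2c$, $a+c$, and $3c-a$, normalize by $\sum_k \xi\alpha_k \asymp K^{1-a}$ (equivalently, lower-bound each weight by $\asymp K^{-a}$ to pass to the uniform variable $R$), and choose the exponents so the surviving sums sit at the $p=1$ logarithmic threshold. The one step you flag as the main obstacle is resolved in the paper by a short induction rather than a two-sided envelope argument: assuming $\lambda_k = \gamma_k/(2\alpha_k)$, the increment needed to keep tracking the cap is at most $\frac{(a-c)c_\gamma}{2c_\alpha}(k+k_0)^{a-c-1}$, which by $c\le 1$ and $T\ge 32$ never exceeds the first branch $\frac{T\mu_g}{16}\alpha_k\lambda_k^2$ of \eqref{eq:delta_k}, so the cap-tracking branch is always the active one and $\lambda_k$ saturates the cap exactly for every $k$.
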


\section{Limitations and Future Work}

In general, fully first-order stochastic algorithms, although competitive with their second-order counterparts exhibit certain limitations like higher iteration complexity \citep{kwon2023fully}. This gap highlights the need for further investigation into the theoretical limits of first-order methods with respect to second-order methods for Riemannian bilevel optimization. Additionally, \algo's application is predominantly restricted to well-conditioned lower-level problems. It remains open to study \algo's potential in a broader range of problem classes.

Future research directions include utilizing our framework to address a wide array of real-world applications, where the natural settings of problems involve varying geometric structures at different decision levels. An exciting future direction is to explore applications of our algorithm in hyperparameter optimization, meta-learning, and reinforcement learning, where manifold optimization ideas have provided significant advantages. \citep{jaquier2020high, tabealhojeh2023rmaml, xu2016manifold, jaquier2020bayesian}

Investigating two-player games with states represented as matrices or other manifold-valued objects could further enrich the bilevel optimization landscape, offering novel insights into game theory and decision-making processes on complex geometries. Moreover, the integration of operator-valued optimization tasks and the development of algorithms that consider the manifold's curvature effects more explicitly would refine our understanding and application of Riemannian optimization techniques. \citep{domingo2020mean, cai2023curvature}

Moreover, in general, the concept of ``bilevel" formulations is becoming increasingly significant in the context of Riemannian problems, where many issues seem to naturally incorporate a two-tier optimization process. One pertinent example is the \(k\)-sparse barycenter problem (e.g., in \cite{do2023approximation}), where the goal is to approximate a covariance matrix \( X \) (represented as an ellipsoid) using a sparse combination of given covariance matrices \( A_1, \ldots, A_N \). Specifically, one aims to find a \(k\)-sparse weight vector \( q(X) \) that minimizes the distance between \( X \) and the Wasserstein barycenter of the selected matrices. Formally, \( q(X) := \arg\min_{q \in \mathcal{Q}} \text{dist}^2(X, \text{BaryCenter}(q, A_1, \ldots, A_N)) \), where \(\mathcal{Q} = \{ q \in \mathbb{R}^N \mid q \geq 0, \|q\|_0 \leq k, \sum_{i=1}^N q_i = 1 \}\). The barycenter is computed as \(\text{BaryCenter}(q, A_1, \ldots, A_N) = \arg\min_{Y \in \mathcal{M}} \sum_{i=1}^N q_i \text{dist}^2(Y, A_i)\), where \(\mathcal{M}\) is the manifold of symmetric positive definite matrices. Consequently, \( X \approx \text{BaryCenter}(q(X), A_1, \ldots, A_N) \), with \( X \) approximated using at most \( k \) of the covariance matrices \( A_1, \ldots, A_N \).

\section{Conclusion}

We have presented a novel and fully first-order approach to Riemannian bilevel optimization. This opens new avenues for addressing non-strongly convex lower-level problems and provides a geometrically aware framework for complex optimization challenges involving manifold constraints.

\bibliographystyle{plainnat}
\setlength{\bibsep}{3pt}
\bibliography{custom}


\clearpage

\appendix

\section*{Appendix / Supplemental Material}

\begin{table}[H]
    \centering
    \begin{tabular}{c l l}
        \toprule
        \textbf{Symbol} & \textbf{Meaning} & \textbf{Less than} \\
        \midrule
        $l_{f,0}$ & Bound of $\|\nabla_x f\|, \|\nabla_y f\|$ & $\cdot$ \\
        $l_{f,1}$ & Smoothness of $f$ & $\cdot$ \\
        $l_{g,0}$ & Bound of $\|\nabla_x g\|$ & $\cdot$ \\
        $l_{g,1}$ & Smoothness of $g$ & $\cdot$ \\
        $\mu_g$ & Strong-convexity of $g$ & $\cdot$ \\
        $l_{g,2}$ & Hessian-continuity of $g$ & $\cdot$ \\
        $M_f$ & Second-order moment of $\nabla f(x,y;\zeta)$ & $l_{f,0}^2 + \sigma_f^2$ \\
        $M_g$ & Second-order moment of $\nabla g(x,y;\phi)$ & $l_{g,0}^2 + \sigma_g^2$ \\
        $l_{f,2}$ & Hessian-continuity of $f$ & $\cdot$ \\
        $l_{F,1}$ & Smoothness of $F(x)$ & $l_{*,0} \left(l_{f,1} + \frac{l_{g,1}^2}{\mu_g} + \frac{2l_{f,0}l_{g,1}l_{g,2}}{\mu_g^2} \right)$ \\
        $l_{\lambda,0}$ & Lipschitzness of $y_\lambda^*(x)$ (for all $\lambda \ge 2l_{f,1}/\mu_g$) & $\frac{3 l_{g,1}}{\mu_g}$ \\
        $l_{\lambda,1}$ & Smoothness of $y_\lambda^*(x)$ (for $\lambda \ge 2l_{f,1}/\mu_g$) & $32 (l_{g,2} + \lambda^{-1} \cdot l_{f,2}) \frac{l_{g,1}^2}{\mu_g^3}$ \\
        $l_{*,0}$ & $= 1 + \max_{\lambda \ge 2l_{f,1}/\mu_g}l_{\lambda,0}$ & $\cdot$ \\
        $l_{*,1}$ & $= \max_{\lambda \ge 2l_{f,1}/\mu_g} l_{\lambda, 1}$ & $\cdot$ \\
        \bottomrule
    \end{tabular}
    \caption{Meaning of Constants}
    \label{tab:constant_relations}
\end{table}

In Table \ref{tab:constant_relations}, we list the main symbols used in the following proofs, their interpretations, and the inequalities that they satisfy (where applicable).

To simplify the representation of variable movements, we define $q^{x}_{k}$, $q^{y}_{k,t}$, and $q^{z}_{k,t}$ as follows:
\begin{equation*}
\begin{aligned}
    q^{x}_{k} & := \grad_{x} f(x_{k}, y_{k+1}) + \lambda_{k} (\grad_{x} g(x_{k}, y_{k+1}) - \grad_{x} g(x_{k}, z_{k+1})), \\
    q^{y}_{k,t} & := \grad_{y} f(x_{k}, y_{k,t}) + \lambda_{k} \grad_{y} g(x_{k}, y_{k,t}), \\
    q^{z}_{k,t} & := \grad_{y} g(x_{k}, z_{k,t}).
\end{aligned}
\end{equation*}
These quantities represent the expected movements of $x_{k}$, $y_{k}^{(t)}$, and $z_{k}^{(t)}$ in the absence of stochastic noise in the gradient oracles.

\section{Detailed Proofs of Lemmas \ref{lemma:riemannian_gradient_bound} and \ref{lemma:distance_bound}}

\subsection{Lemma A.1}
\label{A.1}

This lemma establishes a bound on the difference between the gradient of a function at two points \(x_2\) and \(x_1\), taking into account the effects of parallel transport.

$F(x) = f(x, y^*(x))$ is $l_{F, 1}$-smooth where
$$l_{F, 1} \leq l_{*, 0}\left(l_{f, 1}+\frac{l_{g, 1}^{2}}{\mu_{g}}+\frac{2 l_{f, 0} l_{g, 1} l_{g, 2}}{\mu_{g}^{2}}\right).$$

\begin{proof}
We recall from equation \eqref{Fhypergradient} that the gradient \(\grad F(x)\) of the function F, defined in \eqref{eq:P}, is given by the expression:
\[
\grad F(x) = \grad_x f(x, y^*(x)) - \Hess_{xy} g(x, y^*(x)) \left( \Hess_{yy} g(x, y^*(x)) \right)^{-1} \grad_y f(x, y^*(x)),
\]
where \(y^*(x)\) denotes the solution to the inner-level optimization problem associated with \(x\).

The bound for the difference between the parallel transported gradient at \(x_2\) to \(x_1\) and the gradient at \(x_1\) is given by:
\[
\begin{aligned}
&\left\|\mathrm{PT}_{x_2 \to x_1}\grad F\left(x_{2}\right)-\grad F\left(x_{1}\right)\right\| \\
&\quad \leq \left(l_{f, 1}+\frac{l_{f, 0}}{\mu_{g}} l_{g, 2} + \frac{l_{g, 1}}{\mu_{g}} l_{g, 1}\right)\left(\left\|d_{\Mc}(x_{1},x_{2})\right\|+\left\|d_{\Nc}(y^{*}\left(x_{1}\right),y^{*}\left(x_{2}\right))\right\|\right) \\
&\quad\quad + l_{g, 1} l_{f, 0}\left\|\mathrm{PT}_{x_2 \to x_1}\left(\nabla_{yy}^{2} g\left(x_{2}, y^{*}\left(x_{2}\right)\right)^{-1}\right)\mathrm{PT}_{x_1 \to x_2}-\nabla_{yy}^{2} g\left(x_{1}, y^{*}\left(x_{1}\right)\right)^{-1}\right\|.
\end{aligned}
\]
To simplify this inequality, we employ the assumptions on the smoothness and strong convexity of the functions \(f\) and \(g\), alongside the triangle inequality. From our assumptions in Section \ref{ss:assumptions}, we require in particular:

\begin{enumerate}
    \item \textbf{Smoothness of \(f\):} \(f\) is \(l_{f, 1^{-}}\)-smooth, which provides a bound on the gradient differences of \(f\) at two points.
    \item \textbf{Smoothness of \(g\):} \(g\) is \(l_{g, 1}\)-smooth, enabling us to bound the gradient differences of \(g\).
    \item \textbf{Strong Convexity of \(g\):} The \(\mu_{g}\)-strong convexity of \(g\) facilitates relating the Hessian of \(g\) to its inverse, critical for bounding the differences in Hessian inverses.
    \item \textbf{Lipschitz Continuity of the Hessian of \(g\):} The \(l_{g, 2}\)-Lipschitz continuity of the Hessian of \(g\) aids in bounding the differences in Hessians at two points.
\end{enumerate}
\[
\begin{aligned}
&\left\|\mathrm{PT}_{x_2 \to x_1}\mathrm{grad} F(x_{2})-\mathrm{grad} F(x_{1})\right\| \\
\leq &\; ||\mathrm{PT}_{x_2 \to x_1} \mathrm{grad} f(x_2, y^*(x_2)) - \mathrm{grad} f(x_1, y^*(x_1))|| \\
&+ ||\mathrm{PT}_{x_2 \to x_1} \mathrm{grad} g(x_2, y^*(x_2)) - \mathrm{grad} g(x_1, y^*(x_1))|| \\
&\cdot \max_x ||\mathrm{Hess}_{yy}g(x, y^*(x))^{-1}|| \cdot \max_x ||\mathrm{grad}_y f(x, y^*(x))|| \\
&+ ||\mathrm{PT}_{x_2 \to x_1} (\mathrm{Hess}_{xy} g(x_2, y^*(x_2))\mathrm{PT}_{x_1 \to x_2} - \mathrm{Hess}_{xy} g(x_1, y^*(x_1))|| \\ 
&\cdot \max_x ||\mathrm{Hess}_{yy}g(x, y^*(x))^{-1}|| \cdot \max_x ||\mathrm{grad}_y f(x, y^*(x))|| \\
&+ \max_x||\mathrm{Hess}_{xy} g(x, y^*(x))|| \cdot \max_x||\mathrm{grad}_y f(x, y^*(x))|| \\
& \cdot ||\mathrm{PT}_{x_2 \to x_1}\mathrm{Hess}^{-1}_{yy}(x_2, y^*(x_2))\mathrm{PT}_{x_1 \to x_2} - \mathrm{Hess}^{-1}_{yy}(x_1, y^*(x_1))|| \\
\leq &\; (l_{f_1} + \frac{l_{f_0}}{\mu_g} l_{g_2} + \frac{l_{g_1}}{\mu_g}l_{f_1}) (d_{\Mc}(x_1, x_2) + d_{\Nc}(y^*(x_1), y^*(x_2))) \\
&+ \frac{l_{g_1} l_{f_0}}{\mu_g^2} l_{g, 2} (d_{\Mc}(x_1, x_2) + d_{\Nc}(y^*(x_1), y^*(x_2))).
\end{aligned}
\]
where to upper bound $||\mathrm{PT}_{x_2 \to x_1}\mathrm{Hess}^{-1}_{yy}(x_2, y^*(x_2))\mathrm{PT}_{x_1 \to x_2} - \mathrm{Hess}^{-1}_{yy}(x_1, y^*(x_1))||$ we used the following result on bounding the norm of the difference of the inverses of two matrices, \(A\) and \(B\), leveraging the Neumann series to express the inverse of a matrix in terms of its perturbation. Specifically, we have:
\[
\|A^{-1} - B^{-1}\| \leq \|\Delta A\| \cdot \|A^{-1}\| \cdot \|B^{-1}\|,
\]
where $\Delta A = A - B$, which is instrumental in establishing the final bound on the gradient difference.

The lemma concludes with:
\[
\begin{aligned}
l_{F, 1} & \leq l_{*, 0}\left(l_{f, 1}+\frac{l_{f, 0} l_{g, 2}+l_{g, 1}^{2}}{\mu_{g}}+\frac{l_{f, 0} l_{g, 1} l_{g, 2}}{\mu_{g}^{2}}\right) \\
& \leq l_{*, 0}\left(l_{f, 1}+\frac{l_{g, 1}^{2}}{\mu_{g}}+\frac{2 l_{f, 0} l_{g, 1} l_{g, 2}}{\mu_{g}^{2}}\right),
\end{aligned}
\]
where the last inequality utilizes the condition that \(l_{g, 1} / \mu_{g} \geq 1\).
\end{proof}

\subsection{Lemma A.2}
\label{A.2}

This lemma establishes a bound on the difference between the gradient of a function \(F(x)\) and the gradient of a Lagrangian \(\mathcal{L}_{\lambda}(x, y)\) with respect to \(x\), adjusted for the effects of parallel transport.

For any $x, y, \lambda$, the following holds:
\[
\begin{array}{r}
\left\|\mathrm{grad} F(x) - \mathrm{grad}_x \mathcal{L}_{\lambda}(x, y) + \mathrm{PT}_{y \to y^*} \mathrm{Hess}_{xy} g(x, y^*) \left(\mathrm{Hess}_{yy} g(x, y^*)\right)^{-1} \mathrm{PT}_{y^* \to y} \mathrm{grad}_y \mathcal{L}(x, y)\right\| \\
\quad \leq 2\left(\frac{l_{g, 1}}{\mu_{g}}\right)d_{\Mc}(y, y^*)\left(l_{f, 1}+\lambda \cdot \min \left(2 l_{g, 1}, l_{g, 2}d_{\Nc}(y, y^*)\right)\right).
\end{array}
\]

\begin{proof}
Given the Lagrangian \(\mathcal{L}_{\lambda}(x, y)\), we consider the gradients with respect to variables \(x\) and \(y\), expressed as:
\[
\begin{aligned}
\grad_x \mathcal{L}_{\lambda}(x, y) &= \grad_x f(x, y) + \lambda \left( \grad_x g(x, y) - \text{PT}_{y^*(x) \to y} \grad_x g(x, y^*(x)) \right), \\
\grad_y \mathcal{L}_{\lambda}(x, y) &= \grad_y f(x, y) + \lambda \grad_y g(x, y).
\end{aligned}
\]
Here, \(\text{PT}_{y^*(x) \to y}\) denotes the parallel transport operation that moves vectors along geodesics from the tangent space at \(y^*(x)\) to the tangent space at \(y\), ensuring that the comparison of vectors is meaningful.

The discrepancy between the gradient of \(F\) and the gradient of the Lagrangian with respect to \(x\) is detailed as follows:
\[
\begin{aligned}
\grad F(x) - \grad_x \mathcal{L}_{\lambda}(x, y) = & \grad_x f(x, y^*) - \text{PT}_{y \to y^*} \grad_x f(x, y) \\
& - \text{Hess}_{xy} g(x, y^*) \left( \text{Hess}_{yy} g(x, y^*)^{-1} \grad_y f(x, y) \right) \\
& - \lambda \left( \grad_x g(x, y) - \text{PT}_{y^* \to y} \grad_x g(x, y^*) \right).
\end{aligned}
\]

We can rearrange terms for $\grad_x g(x, y) - \mathrm{PT}_{y^* \to y} \grad_x g(x, y^*)$ as the following:
\[
\begin{aligned}
\grad_x g(x, y) - \mathrm{PT}_{y^* \to y} \grad_x g(x, y^*) = & \grad_x g(x, y) - \mathrm{PT}_{y^* \to y} \grad_x g(x, y^*) \\
& - \mathrm{PT}_{y^* \to y} \text{Hess}_{xy} g(x, y^*) \mathrm{PT}_{y \to y^*} \text{Exp}_y^{-1}(y^*) \\
& + \mathrm{PT}_{y^* \to y} \text{Hess}_{xy} g(x, y^*) \mathrm{PT}_{y \to y^*} \text{Exp}_y^{-1}(y^*).
\end{aligned}
\]

Note that from the optimality condition for $y^*$, we have $\mathrm{grad}_y g(x, y^*) = 0$. From the gradient of the Lagrangian $\mathcal{L}$, we have $\mathrm{grad}_y \mathcal{L}(x, y) = \mathrm{grad}_y f(x, y) + \lambda \mathrm{grad}_y g(x, y)$. We can express the equivalent of $y - y^*$ using the inverse exponential map and the Hessian as follows:
\[
\begin{aligned}
\text{Exp}_y^{-1}(y^*) = & -\left(\text{Hess}_{yy} g(x, y^*)\right)^{-1} \left(\grad_y g(x, y) - \text{PT}_{y^* \to y} \grad_y g(x, y^*) \right. \\
& \left. - \text{PT}_{y^* \to y}\text{Hess}_{yy} g(x, y^*)\text{PT}_{y \to y^*} \text{Exp}_y^{-1}(y^*)\right) \\
& + \frac{1}{\lambda} \left(\text{Hess}_{yy} g(x, y^*)\right)^{-1} \left(\grad_y \mathcal{L}(x, y) - \grad_y f(x, y)\right).
\end{aligned}
\]

This approximation is based on the Taylor expansion in the setting, where \( \mathrm{grad}_y g(x, y^*) = 0 \) because \( y^* \) is an optimal point (assuming \( g \) is minimized at \( y^* \) with respect to \( y \)).
\[ \mathrm{grad}_y g(x, y) - \mathrm{PT}_{y^* \to y} \mathrm{grad}_y g(x, y^*) \approx \mathrm{PT}_{y^* \to y}\mathrm{Hess}_{yy} g(x, y^*)\mathrm{PT}_{y \to y^*} \mathrm{Exp}_y^{-1}(y) \]
\[
\begin{aligned}
\mathrm{grad} F(x) - \mathrm{grad}_x \mathcal{L}_{\lambda}(x, y) \\ 
& = \left(\mathrm{grad}_x f(x, y^*) - \mathrm{grad}_x f(x, y)\right) \\
& - \mathrm{Hess}_{xy} g(x, y^*) \left(\mathrm{Hess}_{yy} g(x, y^*)\right)^{-1} \left(\mathrm{grad}_y f(x, y^*) - \mathrm{PT}_{y \to y^*} \mathrm{grad}_y f(x, y)\right) \\
& - \mathrm{Hess}_{xy} g(x, y^*) \left(\mathrm{Hess}_{yy} g(x, y^*)\right)^{-1} \mathrm{PT}_{y \to y^*} \mathrm{grad}_y \mathcal{L}(x, y) \\
& - \lambda \left(\mathrm{grad}_x g(x, y) - \mathrm{PT}_{y^* \to y} \mathrm{grad}_x g(x, y^*) - \mathrm{PT}_{y^* \to y} \mathrm{Hess}_{xy} g(x, y^*) \mathrm{Exp}_y^{-1}(y^*)\right) \\
& + \lambda \mathrm{Hess}_{xy} g(x, y^*) \left(\mathrm{Hess}_{yy} g(x, y^*)\right)^{-1} \left(\mathrm{grad}_y g(x, y) - \mathrm{PT}_{y^* \to y} \mathrm{grad}_y g(x, y^*) \right. \\
& \left. - \mathrm{PT}_{y^* \to y} \mathrm{Hess}_{yy} g(x, y^*) \mathrm{PT}_{y \to y^*} \mathrm{Exp}_y^{-1}(y^*)\right).
\end{aligned}
\]

To simplify this, we will require the following facts:
\[
\left\|\mathrm{grad}_y g(x, y) - \mathrm{PT}_{y^* \to y} \mathrm{grad}_y g(x, y^*) - \mathrm{PT}_{y^* \to y} \mathrm{Hess}_{yy} g(x, y^*)\mathrm{PT}_{y \to y^*}\mathrm{Exp}_{y^*}^{-1}(y)\right\| \leq l_{g, 2} \|\mathrm{Exp}_{y^*}^{-1}(y)\|^2
\]
\[
\left\|\mathrm{grad}_y g(x, y) - \mathrm{PT}_{y^* \to y} \mathrm{grad}_y g(x, y^*) - \mathrm{PT}_{y^* \to y} \mathrm{Hess}_{yy} g(x, y^*)\mathrm{PT}_{y \to y^*}\mathrm{Exp}_{y^*}^{-1}(y)\right\| \leq 2 l_{g, 1} d_{\Mc}(y, y^*)
\]
\[
\left\|\mathrm{grad}_x g(x, y) - \mathrm{PT}_{y^* \to y} \mathrm{grad}_x g(x, y^*) - \mathrm{PT}_{y^* \to y} \mathrm{Hess}_{xy} g(x, y^*)\mathrm{PT}_{y \to y^*}\mathrm{Exp}_{y^*}^{-1}(y)\right\| \] \[\leq \min \left(l_{g, 2} d_{\Mc}(y, y^*)^2, 2 l_{g, 1} d_{\Mc}(y, y^*)\right) .
\]
\[
\left\|\mathrm{grad}_x f(x, y^*) - \mathrm{PT}_{y \to y^*} \mathrm{grad}_x f(x, y)\right\| \leq l_{f, 1} d_{\Nc}(y, y^*),
\]
\[
\left\|\mathrm{grad}_y f(x, y^*) - \mathrm{PT}_{y \to y^*} \mathrm{grad}_y f(x, y)\right\| \leq l_{f, 1} d_{\Nc}(y, y^*).
\]

With this, our final result is:
\[
\begin{aligned}
& \left\|\mathrm{grad} F(x) - \mathrm{grad}_x \mathcal{L}_{\lambda}(x, y) + \mathrm{PT}_{y \to y^*} \mathrm{Hess}_{xy} g(x, y^*) \left(\mathrm{Hess}_{yy} g(x, y^*)\right)^{-1} \mathrm{PT}_{y^* \to y} \mathrm{grad}_y \mathcal{L}(x, y)\right\| \\
& \quad \leq l_{f, 1}\left(1+\frac{l_{g, 1}}{\mu_{g}}\right)d_{\Nc}(y, y^*) + \lambda\left(1+\frac{l_{g, 1}}{\mu_{g}}\right)d_{\Nc}(y, y^*) \min \left(l_{g, 2}d_{\Mc}(y, y^*)^2, 2 l_{g, 1}\right).
\end{aligned}
\]

We know that $l_{g, 1} / \mu_{g} \geq 1$ and thus, we have:
\[
\begin{array}{r}
\left\|\mathrm{grad} F(x) - \mathrm{grad}_x \mathcal{L}_{\lambda}(x, y) + \mathrm{PT}_{y \to y^*} \mathrm{Hess}_{xy} g(x, y^*) \left(\mathrm{Hess}_{yy} g(x, y^*)\right)^{-1} \mathrm{PT}_{y^* \to y} \mathrm{grad}_y \mathcal{L}(x, y)\right\| \\
\quad \leq 2\left(\frac{l_{g, 1}}{\mu_{g}}\right)d_{\Nc}(y, y^*)\left(l_{f, 1}+\lambda \cdot \min \left(2 l_{g, 1}, l_{g, 2}d_{\Nc}(y, y^*)\right)\right).
\end{array}
\]
\end{proof}

\subsection{Lemma A.3}
\label{A.3}

Under Assumptions \ref{assumption:objective_functions}, \ref{assumption:lower_level} and \ref{assumption:gradient_access}, and $\lambda > 2l_{f,1} / \mu_g$, a function $y_{\lambda}^*(x)$ is $l_{\lambda,1}$-smooth: for any $x_1, x_2 \in X$, we have $$\|\grad y_\lambda^*(x_1) - \grad y_\lambda^*(x_2)\| \le l_{\lambda, 1} d_{\Mc}(x_1, x_2)$$ where $l_{\lambda,1} \le 32 (l_{g,2} + \lambda^{-1} l_{f,2}) l_{g,1}^2 / \mu_g^3$.

\begin{proof}
The Lipschitz continuity of $y_{\lambda}^{*}(x)$ directly follows from Lemma \ref{lemma:distance_bound}, considering the manifold's intrinsic geometry. By the optimality condition for $\nabla y_{\lambda}^{*}(x)$ in the manifold setting, we obtain
\[
\nabla_{y} \mathcal{L}_{\lambda}\left(x, y_{\lambda}^{*}(x)\right)=\nabla_{y} f\left(x, y_{\lambda}^{*}(x)\right)+\lambda \nabla_{y} g\left(x, y_{\lambda}^{*}(x)\right)=0 ,
\]

where $\nabla_{y}$ denotes the gradient with respect to $y$. Differentiating with respect to $x$ along the manifold yields
\[
\left(\text{Hess}_{yy} f\left(x, y_{\lambda}^{*}(x)\right)+\lambda \text{Hess}_{yy} g\left(x, y_{\lambda}^{*}(x)\right)\right) \nabla y_{\lambda}^{*}(x)=-\left(\text{Hess}_{xy} f\left(x, y_{\lambda}^{*}(x)\right)+\lambda \text{Hess}_{xy} g\left(x, y_{\lambda}^{*}(x)\right)\right),
\]

where $\text{Hess}_{yy}$ and $\text{Hess}_{xy}$ represent the Hessians with respect to $y$ and the mixed partial derivative along the manifold, respectively. Given $\lambda>2 l_{f, 1} / \mu_{g}$, the left-hand side exhibits a positive definiteness with a minimum eigenvalue greater than $\lambda \mu_{g} / 2$. Thus,

\[
\nabla y_{\lambda}^{*}(x)=-\left(\frac{1}{\lambda} \text{Hess}_{yy} f\left(x, y_{\lambda}^{*}(x)\right)+\text{Hess}_{yy} g\left(x, y_{\lambda}^{*}(x)\right)\right)^{-1}\left(\frac{1}{\lambda} \text{Hess}_{xy} f\left(x, y_{\lambda}^{*}(x)\right)+\text{Hess}_{xy} g\left(x, y_{\lambda}^{*}(x)\right)\right) .
\]

To derive the smoothness property, we compare the expression at two points $x_{1}$ and $x_{2}$ on the manifold:
\[
\begin{aligned}
\frac{\lambda \mu_{g}}{2}\|\nabla y_{\lambda}^{*}(x_{1})-\nabla y_{\lambda}^{*}(x_{2})\| \leq & \left(l_{f, 2}+\lambda l_{g, 2}\right)\left(d_{\Mc}(x_{1},x_{2})+d_{\Nc}(y_{\lambda}^{*}(x_{1}),y_{\lambda}^{*}(x_{2}))\right) \max _{x \in X}\|\nabla y_{\lambda}^{*}(x)\| \\
& +\left(l_{f, 2}+\lambda l_{g, 2}\right)\left(d_{\Mc}(x_{1},x_{2})+d_{\Nc}(y_{\lambda}^{*}(x_{1}),y_{\lambda}^{*}(x_{2}))\right) \\
\leq & \left(l_{f, 2}+\lambda l_{g, 2}\right)\left(1+l_{\lambda, 0}\right)^{2}d_{\Mc}(x_{1},x_{2}) .
\end{aligned}
\]

Rearranging, we obtain

\[
\|\nabla y_{\lambda}^{*}(x_{1})-\nabla y_{\lambda}^{*}(x_{2})\| \leq 32\left(\frac{l_{f, 2}}{\lambda}+l_{g, 2}\right) \frac{l_{g, 1}^{2}}{\mu_{g}^{3}}d_{\Mc}(x_{1},x_{2}).
\]
\end{proof}

\subsection{Lemma A.4}
\label{A.4}

For any fixed $\lambda > 2l_{f,1} / \mu_g$, at every $k$ iteration conditioned on $\mathcal{F}_k$, we have
\begin{align*}
    \Exs[\|d_{\Nc}\left(y^{*}\left(x_{k+1}\right), y^{*}\left(x_{k}\right)\right)^2| \mathcal{F}_k] \le \xi^2 l_{*,0}^2 \left( \alpha_k^2 \Exs[\|q_k^x\|^2 | \mathcal{F}_k] + \alpha_k^2 \sigma_f^2 + \beta_k^2 \sigma_g^2 \right). 
\end{align*}

\begin{proof}
The result directly follows from the Lipschitz continuity established in Lemma \ref{lemma:distance_bound}, taking the limit as \(\lambda_{1}=\lambda_{2}\) approaches infinity on a manifold.

Given the structure of a manifold, we assess the changes in the optimal solution \(y^{*}\) between consecutive points \(x_{k+1}\) and \(x_{k}\) through the geodesic distance, conditioned on the filtration \(\mathcal{F}_{k}\). This approach quantifies the modifications in \(y^{*}\) as we traverse from one location to another on the manifold. Specifically, we express the expectation of the squared geodesic distance between \(y^{*}(x_{k+1})\) and \(y^{*}(x_{k})\) as follows:

$$
\mathbb{E}\left[d_{\Nc}\left(y^{*}\left(x_{k+1}\right), y^{*}\left(x_{k}\right)\right)^2 \mid \mathcal{F}_{k}\right] \leq l_{*, 0}^2 \mathbb{E}\left[d_{\Mc}\left(x_{k+1}, x_{k}\right)^2 \mid \mathcal{F}_{k}\right],
$$

The inequality captures the bounded change in \(y^{*}\) in response to movements in \(x\) across the manifold, leveraging the Lipschitz property of \(y^{*}\) relative to \(x\).

Further, by incorporating the step-sizes and the stochastic gradients' variances, we refine this inequality to:

$$
\leq l_{*, 0}^2 \xi^2 \alpha_{k}^2\left(\mathbb{E}\left[\left\|q_{k}^{x}\right\|^2 \mid \mathcal{F}_{k}\right]+\alpha_{k}^2 \sigma_{f}^2+\beta_{k}^2 \sigma_{g}^2\right),
$$

where \(\xi^2\), \(\alpha_{k}\), \(\sigma_{f}\), and \(\sigma_{g}\) encapsulate the effect of the algorithm's parameters and the inherent randomness of the optimization problem within the manifold setting. The expression \(\mathbb{E}\left[\left\|q_{k}^{x}\right\|^2 \mid \mathcal{F}_{k}\right]\) reflects the expected squared norm of the search direction on the tangent space.
\end{proof}

\subsection{Lemma A.5}
\label{A.5}

At every $k^{th}$ iteration, conditioned on $\mathcal{F}_k$, let $v_k$ be a random vector decided before updating $x_k$. Then for any $\eta_k > 0$, we have
\begin{align*}
    \Exs[\langle v_k, {y^*(x_{k+1}) - y^*(x_k)}\rangle | F_k ] &\le (\xi \alpha_k \eta_k + M \xi^2 l_{*,1}^2 \beta_k^2) \Exs[\|v_k\|^2 | F_k] \\
    &\quad + \left(\frac{\xi \alpha_k l_{*,0}^2}{4\eta_k} + \frac{\xi^2 \alpha_k^2}{4}\right) \Exs[\|q_k^x\|^2 | \mathcal{F}_k] + \frac{\xi^2}{4}(\alpha_k^2 \sigma_f^2 + \beta_k^2 \sigma_g^2),
\end{align*}
where $M := \max\left(l_{f,0}^2 + \sigma_f^2, l_{g,0}^2 + \sigma_g^2\right)$.

\begin{proof}
Utilizing the smoothness property of $y^{*}(x)$ as discussed in \cite{chen2021closing}, which is essential for controlling the noise variance induced by updating $x$, we proceed as follows on a manifold:

Consider the inner product on the tangent space of the manifold at point $x_k$, which respects the manifold's geometry. For two vectors $u, v$ in the tangent space at $x_k$, their inner product is denoted by $\langle u, v \rangle_{x_k}$. We can then express the expectation involving this inner product as follows:

$$
\begin{aligned}
\langle v_{k}, \Exp_{x_k}^{-1}(y_{k+1}^{*}) - \Exp_{x_k}^{-1}(y_{k}^{*}) \rangle_{x_k} = & \langle v_{k}, \nabla y^{*}(x_{k})(\Exp_{x_k}^{-1}(x_{k+1})) \rangle_{x_k} \\
& + \langle v_{k}, \Exp_{x_k}^{-1}(y^{*}(x_{k+1})) - \Exp_{x_k}^{-1}(y^{*}(x_{k})) - \nabla y^{*}(x_{k})(\Exp_{x_k}^{-1}(x_{k+1})) \rangle_{x_k}.
\end{aligned}
$$

For the first term, applying the expectation and the Cauchy-Schwarz inequality on the manifold, we get:

$$
\mathbb{E}[\langle v_{k}, \nabla y^{*}(x_{k})(\Exp_{x_k}^{-1}(x_{k+1})) \rangle_{x_k} \mid \mathcal{F}_{k}] = -\xi \alpha_{k} \mathbb{E}[\langle v_{k}, \nabla y^{*}(x_{k}) q_{k}^{x} \rangle_{x_k} \mid \mathcal{F}_{k}]
$$

$$
\leq \xi \alpha_{k} \eta_{k} \mathbb{E}[\|v_{k}\|^2_{x_k} \mid \mathcal{F}_{k}] + \frac{\xi \alpha_{k}}{4 \eta_{k}} \mathbb{E}[\|\nabla y^{*}(x_{k}) q_{k}^{x}\|^2_{x_k} \mid \mathcal{F}_{k}]
$$

$$
\leq \xi \alpha_{k} \eta_{k} \mathbb{E}[\|v_{k}\|^2_{x_k} \mid \mathcal{F}_{k}] + \frac{\xi \alpha_{k} l_{*, 0}^{2}}{4 \eta_{k}} \mathbb{E}[\|q_{k}^{x}\|^2_{x_k} \mid \mathcal{F}_{k}]
$$

For the second term, leveraging the smoothness of $y^{*}(x)$ on the manifold, we have:

$$
\mathbb{E}[\langle v_{k}, \Exp_{x_k}^{-1}(y^{*}(x_{k+1})) - \Exp_{x_k}^{-1}(y^{*}(x_{k})) - \nabla y^{*}(x_{k})(\Exp_{x_k}^{-1}(x_{k+1})) \rangle_{x_k} \mid \mathcal{F}_{k}]
$$

$$
\leq \frac{l_{*, 1}}{2} \mathbb{E}[d_{\Mc}(x_{k+1}, x_{k})^2 \mid \mathcal{F}_{k}]
$$
\end{proof}

\subsection{Lemma A.6}
\label{A.6}

Under Assumptions \ref{assumption:objective_functions}-\ref{assumption:f_second_order}, at every $k^{th}$ iteration, conditioned on $\mathcal{F}_k$, let $v_k$ be a random vector decided before updating $x_k$. Then for any $\eta_k > 0$, we have
\begin{align*}
    \Exs[\langle {v_k}, {y_{\lambda_{k+1}}^*(x_{k+1}) - y_{\lambda_k}^*(x_k)} \rangle | \mathcal{F}_k ] &\le (\delta_k/\lambda_k + \xi \alpha_k \eta_k + M \xi^2 l_{\lambda_k,1}^2 \beta_k^2) \Exs[\|v_k\|^2 | \mathcal{F}_k] \\
    &+ \left(\frac{\xi \alpha_k l_{*,0}^2}{4\eta_k} + \frac{\xi^2 \alpha_k^2}{4}\right) \Exs[\|q_k^x\|^2 | \mathcal{F}_k] + \frac{\xi^2}{4} (\alpha_k^2 \sigma_f^2 + \beta_k^2 \sigma_g^2) + \frac{\delta_k l_{f,0}^2}{\lambda_k^3 \mu_g^2},
    \end{align*}
where $M := \max\left(l_{f,0}^2 + \sigma_f^2, l_{g,0}^2 + \sigma_g^2\right)$.

\begin{proof}
We start with the following decomposition:

\[
\begin{aligned}
\langle v_{k}, & \Exp_{x_{k}}^{-1}(y_{\lambda_{k+1}}^{*}(x_{k+1})) - \Exp_{x_{k}}^{-1}(y_{\lambda_{k}}^{*}(x_{k})) \rangle = \\
& \langle v_{k}, \Exp_{x_{k+1}}^{-1}(y_{\lambda_{k+1}}^{*}(x_{k+1})) - \Exp_{x_{k+1}}^{-1}(y_{\lambda_{k}}^{*}(x_{k+1})) \rangle \\
& +\langle v_{k}, \nabla y_{\lambda_{k}}^{*}(x_{k})(\Exp_{x_{k}}^{-1}(x_{k+1})) \rangle \\
& +\langle v_{k}, \Exp_{x_{k}}^{-1}(y_{\lambda_{k}}^{*}(x_{k+1})) - \Exp_{x_{k}}^{-1}(y_{\lambda_{k}}^{*}(x_{k})) - \nabla y_{\lambda_{k}}^{*}(x_{k})(\Exp_{x_{k}}^{-1}(x_{k+1})) \rangle.
\end{aligned}
\]

For the second and third terms, the smoothness of $y_{\lambda}(x)$ is applied similarly to the proof in \ref{A.5}, considering the manifold's intrinsic geometry.

Regarding the first term, taking expectation and using the inequality $\langle a, b\rangle \leq c\|a\|^2 + \frac{1}{4c}\|b\|^2$ adapted for the tangent space, we get:
\[
\begin{aligned}
\mathbb{E}&[\langle v_{k}, \Exp_{x_{k+1}}^{-1}(y_{\lambda_{k+1}}^{*}(x_{k+1})) - \Exp_{x_{k+1}}^{-1}(y_{\lambda_{k}}^{*}(x_{k+1})) \rangle \mid \mathcal{F}_{k}] \\
& \leq c \mathbb{E}[\|v_{k}\|^2] + \frac{1}{4c} \mathbb{E}[\|\Exp_{x_{k+1}}^{-1}(y_{\lambda_{k+1}}^{*}(x_{k+1})) - \Exp_{x_{k+1}}^{-1}(y_{\lambda_{k}}^{*}(x_{k+1}))\|^2] \\
& \leq c \mathbb{E}[\|v_{k}\|^2] + \frac{1}{c} \frac{\delta_{k}^2}{\lambda_{k}^2 \lambda_{k+1}^2} \frac{l_{f,0}^2}{\mu_{g}^2},
\end{aligned}
\]
where the expectation and norms are understood within the context of the manifold's geometry. By selecting $c = \frac{\delta_{k}}{\lambda_{k}}$, we derive:
\[
\mathbb{E}[\langle v_{k}, \Exp_{x_{k+1}}^{-1}(y_{\lambda_{k+1}}^{*}(x_{k+1})) - \Exp_{x_{k+1}}^{-1}(y_{\lambda_{k}}^{*}(x_{k+1})) \rangle \mid \mathcal{F}_{k}] \leq \frac{\delta_{k}}{\lambda_{k}} \mathbb{E}[\|v_{k}\|^2] + \frac{l_{f,0}^2 \delta_{k}}{\mu_{g}^2 \lambda_{k}^3}.
\]
Combining this with bounds on the other two terms, we conclude the lemma.
\end{proof}

\subsection*{Proof of Lemma 1}
\label{Lemma 1}

Let \( y_{\lambda}^*(x) \coloneqq \arg \min_{y} \mathcal{L}_{\lambda}(x, y) \). Note that \(\mathrm{grad}_y \mathcal{L}_{\lambda}(x, y_{\lambda}^*(x)) = 0\), and thus
\[
\mathrm{grad} \mathcal{L}_{\lambda}^*(x) = \mathrm{grad}_x \mathcal{L}_{\lambda}(x, y_{\lambda}^*(x)) + \mathrm{grad}_x y_{\lambda}^*(x)^T \mathrm{grad}_y \mathcal{L}_{\lambda}(x, y_{\lambda}^*(x)) = \mathrm{grad}_x \mathcal{L}_{\lambda}(x, y_{\lambda}^*(x)).
\]

To compare this to \(\mathrm{grad} F(x)\), we can invoke Lemma \ref{A.2} which gives
\[
\begin{aligned}
& ||\mathrm{grad} F(x) - \mathrm{grad}_x \mathcal{L}_{\lambda}(x, y_{\lambda}^*(x))|| \\
& \quad \leq 2\left(l_{g, 1} / \mu_{g}\right) d_{\Nc}(y_{\lambda}^*(x), y^*(x)) \left(l_{f, 1} + \lambda \cdot \min \left(2 l_{g, 1}, l_{g, 2} d_{\Nc}(y^*(x), y_{\lambda}^*(x))\right)\right) .
\end{aligned}
\]

From a version of Lemma 2 (\ref{Lemma 2}), we use \(d_{\Nc}(y_{\lambda}^*(x), y^*(x)) \leq \frac{2 l_{f, 0}}{\lambda \mu_{g}}\), and get
\[
||\mathrm{grad} F(x) - \mathrm{grad}_x \mathcal{L}_{\lambda}(x, y_{\lambda}^*(x))|| \leq \frac{1}{\lambda} \cdot \frac{4 l_{f, 0} l_{g, 1}}{\mu_{g}^2} \left(l_{f, 1} + \frac{2 l_{f, 0} l_{g, 2}}{\mu_{g}}\right). \qed
\]

Here, \( \mathrm{grad}_x y_{\lambda}^*(x)^T \) represents the transpose of the gradient of \( y_{\lambda}^*(x) \) with respect to \( x \).

\subsection*{Proof of Lemma 2}
\label{Lemma 2}

Note that on a manifold, the function \(\mathcal{L}_{\lambda}(x, y)\) is at least \(\frac{\lambda \mu_{g}}{2}\) strongly-convex in \(y\) with respect to the metric once \(\lambda \geq 2 l_{f, 1} \mu_{g}\). To see this,
\[
\mathcal{L}_{\lambda}(x, y) = f(x, y) + \lambda (g(x, y) - g^*(x))
\]
which is at least \(-l_{f, 1} + \lambda \mu_{g}\)-strongly convex in \(y\) with respect to the metric. If \(\lambda > 2 l_{f, 1} / \mu_{g}\), this implies at least \(\lambda \mu_{g} / 2\) strong-convexity of \(\mathcal{L}_{\lambda}(x, y)\) in \(y\).

By the optimality condition at \(y_{\lambda_{1}}^*(x_{1})\) with \(x_{1}, \lambda_{1}\), we have
\[
\mathrm{grad}_y f(x_{1}, y_{\lambda_{1}}^*(x_{1})) + \lambda_{1} \mathrm{grad}_y g(x_{1}, y_{\lambda_{1}}^*(x_{1})) = 0,
\]

which also implies that \(d_M(g(x_{1}, y_{\lambda_{1}}^*(x_{1})), 0) \leq l_{f, 0} / \lambda_{1}\). Observe that
\[
\begin{aligned}
\mathrm{grad}_y f & (x_2, y_{\lambda_1}^*(x_1)) + \lambda_2 \mathrm{grad}_y g(x_2, y_{\lambda_1}^*(x_1)) \\
= & \left(\mathrm{grad}_y f(x_2, y_{\lambda_1}^*(x_1)) - \mathrm{PT}_{x_1 \rightarrow x_2}(\mathrm{grad}_y f(x_1, y_{\lambda_1}^*(x_1)))\right) + \mathrm{PT}_{x_1 \rightarrow x_2}(\mathrm{grad}_y f(x_1, y_{\lambda_1}^*(x_1))) \\
& + \lambda_2 \left(\mathrm{grad}_y g(x_2, y_{\lambda_1}^*(x_1)) - \mathrm{PT}_{x_1 \rightarrow x_2}(\mathrm{grad}_y g(x_1, y_{\lambda_1}^*(x_1)))\right) + \lambda_2 \mathrm{PT}_{x_1 \rightarrow x_2}(\mathrm{grad}_y g(x_1, y_{\lambda_1}^*(x_1))) \\
= & \left(\mathrm{grad}_y f(x_2, y_{\lambda_1}^*(x_1)) - \mathrm{PT}_{x_1 \rightarrow x_2}(\mathrm{grad}_y f(x_1, y_{\lambda_1}^*(x_1)))\right) \\
& + \lambda_2 \left(\mathrm{grad}_y g(x_2, y_{\lambda_1}^*(x_1)) - \mathrm{PT}_{x_1 \rightarrow x_2}(\mathrm{grad}_y g(x_1, y_{\lambda_1}^*(x_1)))\right) \\
& + (\lambda_2 - \lambda_1) \mathrm{PT}_{x_1 \rightarrow x_2}(\mathrm{grad}_y g(x_1, y_{\lambda_1}^*(x_1))),
\end{aligned}
\]

where in the last equality, we applied the optimality condition for \(y_{\lambda_{1}}^*(x_{1})\). Then applying the Lipschitzness of \(\mathrm{grad}_y f\) and \(\mathrm{grad}_y g\) in \(x\), we have
\[
d_{\Mc}(\mathrm{grad}_y f(x_{2}, y_{\lambda_{1}}^*(x_{1})) + \lambda_{2} \mathrm{grad}_y g(x_{2}, y_{\lambda_{1}}^*(x_{1})), 0) \leq l_{f, 1} d_{\Mc}(x_{1}, x_{2}) + l_{g, 1} \lambda_{2} d_{\Mc}(x_{2}, x_{1}) + (\lambda_{2} - \lambda_{1}) \frac{l_{f, 0}}{\lambda_{1}} .
\]

Since \(\mathcal{L}_{\lambda_{2}}(x_{2}, y)\) is \(\lambda_{2} \mu_{g} / 2\)-strongly convex in \(y\) with respect to the metric, from the coercivity property of strongly-convex functions, along with the optimality condition with \(y_{\lambda_{2}}^*(x_{2})\), we have
\[
\begin{aligned}
& \frac{\lambda_{2} \mu_{g}}{2} d_{\Mc}(y_{\lambda_{1}}^*(x_{1}), y_{\lambda_{2}}^*(x_{2})) \\
& \quad \leq ||\nabla_y \mathcal{L}_{\lambda_{2}}(x_{2}, y_{\lambda_{1}}^*(x_{1})) - \nabla_y \mathcal{L}_{\lambda_{2}}(x_{2}, y_{\lambda_{1}}^*(x_{2}))|| \\
& \quad \leq (l_{f, 1} + \lambda_{2} l_{g, 1}) d_{\Mc}(x_{1}, x_{2}) + \frac{\lambda_{2} - \lambda_{1}}{\lambda_{1}} l_{f, 0}.
\end{aligned}
\]
\[
\implies
\frac{\lambda_{2} \mu_{g}}{2} d_{\Nc}(y_{\lambda_{1}}^*(x_{1}), y_{\lambda_{2}}^*(x_{2})) \leq ||\nabla_y \mathcal{L}_{\lambda_{2}}(x_{2}, y_{\lambda_{1}}^*(x_{1}))|| \leq (l_{f, 1} + \lambda_{2} l_{g, 1}) d_{\Mc}(x_{1}, x_{2}) + \frac{\lambda_{2} - \lambda_{1}}{\lambda_{1}} l_{f, 0}.
\]

Dividing both sides by \(\lambda_{2} \mu_{g} / 2\) concludes the first part of the proof. Note that \(y^*(x) = \lim_{\lambda \to \infty} y_{\lambda}^*(x)\). Thus, for any \(x\) and finite \(\lambda \geq 2 l_{f, 1} / \mu_{g}\),
\[
d_{\Nc}(y_{\lambda}^*(x), y^*(x)) \leq \frac{2 l_{f, 0}}{\lambda \mu_{g}}. \qed
\]

\noindent \textit{Remarks.}

The Cauchy-Schwarz inequality in the context of manifolds states that for any two tangent vectors \(u, v\) at a point, the following holds:
\[ \langle u, v \rangle \leq \|u\| \cdot \|v\|, \]
where \(\langle \cdot, \cdot \rangle\) denotes the metric and \(\|\cdot\|\) is the norm induced by this metric.

Applying this to the inequality for strong convexity:
\[ \langle \mathrm{grad}_x f - \mathrm{PT}_{yx}(\mathrm{grad}_y f), \Exp^{-1}_x(y) \rangle \geq \mu \|\Exp^{-1}_x(y)\|^2, \]
we get:
\[ \|\mathrm{grad}_x f - \mathrm{PT}_{yx}(\mathrm{grad}_y f)\| \cdot \|\Exp^{-1}_x(y)\| \geq \langle \mathrm{grad}_x f - \mathrm{PT}_{yx}(\mathrm{grad}_y f), \Exp^{-1}_x(y) \rangle. \]

Since the right-hand side of this inequality is the same as the left-hand side of the strong convexity inequality, we can substitute it in, yielding:
\[ \|\mathrm{grad}_x f - \mathrm{PT}_{yx}(\mathrm{grad}_y f)\| \cdot \|\Exp^{-1}_x(y)\| \geq \mu \|\Exp^{-1}_x(y)\|^2. \]

This form of the inequality highlights the relationship between the difference in gradients (after parallel transport) and the geodesic distance between points \(x\) and \(y\).

\section{Proofs of Theorem 2 and Corollary 3}
\label{AppendixB}

In this section, we prove our central result in Theorem \ref{thm:main}. The crux of this analysis revolves around determining the upper boundary of \(V_{k+1} - V_k\) concerning the potential function \(V_k\), as elucidated in equation \ref{eq:4}, tailored to our specific setting.

About \(x_k\) and \(y_k\), as characterized in \algo within the manifold framework, we introduce the following notations:
\begin{equation}
I_k \coloneqq d_{\Nc}(y_k, y_{\lambda, k}^{*})^2, \quad J_k \coloneqq d_{\Nc}(z_k, y_k^{*})^2,
\label{IkJk}
\end{equation}
where \( y_{\lambda, k}^* \coloneqq y_{\lambda_k}^*(x_k) \), \(y_k^* \coloneqq y^*(x_k)\), and \(x^* = \argmin_{x} F(x)\), all situated. Here, \(d_{\Nc}(\cdot, \cdot)\) signifies the distance metric on manifold $\Nc$.

Leveraging these notations, we redefine the potential function \(V_k\) as:
\begin{equation}
V_k \coloneqq \left( F(x_k) - F(x^*) \right) + \lambda_k l_{g, 1} I_k + \frac{\lambda_k l_{g, 1}}{2} J_k,
\label{PotentialV_k}
\end{equation}
for each \(k \in \mathbb N\). In the ensuing subsections, our aim is to delineate the upper limit of \(V_{k+1} - V_k\) vis-à-vis \(I_k\) and \(J_k\), giving due consideration to the manifold's geometry and curvature characteristics. The proof for the Theorem \ref{thm:main}, aptly adapted to this scenario, will be explicated in Section \ref{Theorem2Proof}.

\subsection{Estimation of \(F(x_{k+1}) - F(x_k)\)}
The selection of the step size \( \alpha_k \) is carefully chosen to fit the context of the manifold:
\begin{equation}
\text{(step-size rule):} \quad \alpha_k \leq \frac{1}{2 \xi \tilde{l}_{F, 1}},
\label{Fxk Estimation}
\end{equation}
where \( \tilde{l}_{F, 1} \) is appropriately adjusted to match the manifold's geometric properties. This adjustment is crucial for including the negative term \(-\frac{\xi \alpha_k}{4} \|\mathrm{grad} F(x_k)\|_{x_k}^2\) in our analysis. This term is key in the demonstration of Theorem 2, as discussed in Section \ref{Theorem2Proof}.

Moreover, we also stipulate:
\begin{equation}
\text{(step-size rule):} \quad \frac{\xi}{T} \leq \frac{\mu_{g}}{96 \tilde{l}_{g, 1}}.
\label{Step-Size-Epsilon-T}
\end{equation}

The metrics \(d_{\Nc}^2(y_{k+1}, y_{\lambda, k}^*)\) and \(d_{\Nc}^2(z_{k+1}, y_k^*)\), will be integral to deriving our upper bound estimates, as detailed in Propositions \ref{Lemma B.3} and \ref{Lemma B.5}, respectively.

\newtheorem{proposition}{Proposition}

\begin{proposition}
\label{Lemma B.1}
Under the step-size rules given in equations \ref{Fxk Estimation} and \ref{Step-Size-Epsilon-T}, and $\lambda_{k} \geq 2 l_{f, 1} / \mu_{g}$, it holds that for each $k \in \mathbb{N}$
\[
\begin{aligned}
\mathbb{E}\left[F\left(x_{k+1}\right)-F\left(x_{k}\right) \mid \mathcal{F}_{k}\right] \leq & -\frac{\xi \alpha_{k}}{4}\left(2\left\|\nabla F\left(x_{k}\right)\right\|^{2}+\left\|q_{k}^{x}\right\|^{2}\right)\\
& +\frac{T \mu_{g} \alpha_{k} \lambda_{k}^{2}}{32}\left(2d_{\Nc}^2\left(y_{k+1}, y_{\lambda, k}^{*}\right)+d_{\Nc}^2\left(z_{k+1}, y_{k}^{*}\right)\right) \\
& +\frac{\xi^{2} l_{F, 1}}{2}\left(\alpha_{k}^{2} \sigma_{f}^{2}+\beta_{k}^{2} \sigma_{g}^{2}\right)+\frac{\xi \alpha_{k}}{2} \cdot 3 C_{\lambda}^{2} \lambda_{k}^{-2}
\end{aligned}
\]
where $q_{k}^{x}$ is given in equation \ref{lemma:riemannian_gradient_bound}, and $C_{\lambda}:=\frac{4 l_{f, 0} l_{g, 1}}{\mu_{g}^{2}}\left(l_{f, 1}+\frac{2 l_{f, 0} l_{g, 2}}{\mu_{g}}\right)$.
\end{proposition}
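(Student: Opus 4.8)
The plan is to run the standard Riemannian descent argument for the $l_{F,1}$-smooth function $F$ (Lemma~\ref{A.1}) and then split the resulting two terms into a ``signal'' part that produces $-\frac{\xi\alpha_k}{4}(2\|\grad F(x_k)\|^2+\|q_k^x\|^2)$ and a ``bias/noise'' part. Writing the realized step as $x_{k+1}=\Exp_{x_k}(-\xi\alpha_k\hat q_k^x)$, where $\hat q_k^x:=h_{fx}^k+\lambda_k(h_{gxy}^k-h_{gxz}^k)$ is the noisy realization of $q_k^x$, smoothness gives the almost-sure inequality
\[
F(x_{k+1})-F(x_k)\le -\xi\alpha_k\langle \grad F(x_k),\hat q_k^x\rangle+\tfrac{l_{F,1}}{2}\xi^2\alpha_k^2\|\hat q_k^x\|^2.
\]
First I would condition on $\mathcal{F}_k$. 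Since the outer oracle noise is zero-mean and independent of the inner loop, $\mathbb{E}[\hat q_k^x\mid\mathcal{F}_k]=\mathbb{E}[q_k^x\mid\mathcal{F}_k]$ and $\mathbb{E}[\|\hat q_k^x\|^2\mid\mathcal{F}_k]=\mathbb{E}[\|q_k^x\|^2\mid\mathcal{F}_k]+\mathbb{E}[\|\hat q_k^x-q_k^x\|^2\mid\mathcal{F}_k]$, the latter variance being $O(\sigma_f^2+\lambda_k^2\sigma_g^2)$; multiplied by $\tfrac{l_{F,1}}{2}\xi^2\alpha_k^2$ and recalling $\beta_k=\alpha_k\lambda_k$, this yields the noise contribution $\tfrac{\xi^2 l_{F,1}}{2}(\alpha_k^2\sigma_f^2+\beta_k^2\sigma_g^2)$.

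Next I would apply the polarization identity $-\langle \grad F(x_k),q_k^x\rangle=-\tfrac12\|\grad F(x_k)\|^2-\tfrac12\|q_k^x\|^2+\tfrac12\|\grad F(x_k)-q_k^x\|^2$ inside the conditional expectation. The first two terms give $-\tfrac{\xi\alpha_k}{2}\|\grad F(x_k)\|^2-\tfrac{\xi\alpha_k}{2}\|q_k^x\|^2$, and the step-size rule~\ref{Fxk Estimation}, $\alpha_k\le 1/(2\xi\tilde l_{F,1})$, guarantees $\tfrac{l_{F,1}}{2}\xi^2\alpha_k^2\|q_k^x\|^2\le \tfrac{\xi\alpha_k}{4}\|q_k^x\|^2$, so the deterministic second-order term is absorbed and what remains of the $q_k^x$ terms is exactly $-\tfrac{\xi\alpha_k}{4}\|q_k^x\|^2$, combining with the $\grad F$ term into the first line of the claim. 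All the right-hand quantities $\|q_k^x\|^2$, $d_{\Nc}^2(y_{k+1},y_{\lambda,k}^*)$, $d_{\Nc}^2(z_{k+1},y_k^*)$ are understood in conditional expectation form.

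The remaining work is to control the bias term $\tfrac{\xi\alpha_k}{2}\|\grad F(x_k)-q_k^x\|^2$, which is where the main obstacle lies. I would decompose through $\grad\mathcal{L}_{\lambda_k}^*(x_k)=\grad_x\mathcal{L}_{\lambda_k}(x_k,y_{\lambda,k}^*)$: Lemma~\ref{lemma:riemannian_gradient_bound} supplies $\|\grad F(x_k)-\grad\mathcal{L}_{\lambda_k}^*(x_k)\|\le C_\lambda/\lambda_k$, while the $l_{f,1}$- and $l_{g,1}$-smoothness of the gradients in $y$ bound the tracking gap $\|q_k^x-\grad\mathcal{L}_{\lambda_k}^*(x_k)\|$ by $(l_{f,1}+\lambda_k l_{g,1})\,d_{\Nc}(y_{k+1},y_{\lambda,k}^*)+\lambda_k l_{g,1}\,d_{\Nc}(z_{k+1},y_k^*)$. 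Squaring and using $(\sum_{i=1}^3 a_i)^2\le 3\sum_i a_i^2$ produces the $\tfrac{\xi\alpha_k}{2}\cdot 3C_\lambda^2\lambda_k^{-2}$ term verbatim, plus two distance terms.

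The delicate final step is converting those distance coefficients into the stated factor. Using $\lambda_k\ge 2l_{f,1}/\mu_g$ and $l_{g,1}/\mu_g\ge 1$ to dominate both $(l_{f,1}+\lambda_k l_{g,1})^2$ and $\lambda_k^2 l_{g,1}^2$ by a constant multiple of $\lambda_k^2\tilde l_{g,1}^2$, the ratio rule~\ref{Step-Size-Epsilon-T}, $\xi/T\le \mu_g/(96\tilde l_{g,1})$, is exactly calibrated so that $\tfrac{\xi\alpha_k}{2}\cdot 3(\cdot)\,d_{\Nc}^2(\cdot)$ is dominated by $\tfrac{T\mu_g\alpha_k\lambda_k^2}{32}\bigl(2d_{\Nc}^2(y_{k+1},y_{\lambda,k}^*)+d_{\Nc}^2(z_{k+1},y_k^*)\bigr)$. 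The hard part is the parallel-transport bookkeeping in the decomposition of $q_k^x-\grad F(x_k)$ (so that the Lipschitz estimates and Lemma~\ref{lemma:riemannian_gradient_bound} apply to transported vectors at the correct base points) together with checking that the curvature-adjusted constants $\tilde l_{F,1},\tilde l_{g,1}$ make both step-size rules simultaneously tight enough to close the two absorptions.
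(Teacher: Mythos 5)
Your proposal is correct and follows essentially the same route as the paper's proof: Riemannian descent via $l_{F,1}$-smoothness, the polarization identity $-\langle \grad F, q_k^x\rangle = -\tfrac12\|\grad F\|^2 - \tfrac12\|q_k^x\|^2 + \tfrac12\|\grad F - q_k^x\|^2$, absorption of the second-order term by the rule $\alpha_k \le 1/(2\xi \tilde l_{F,1})$, a triangle-inequality decomposition of the bias through $\grad \mathcal{L}_{\lambda_k}^*(x_k)$ using Lemma~\ref{lemma:riemannian_gradient_bound} and the $y$-Lipschitzness of $\grad_x f$, $\grad_x g$, followed by $(a+b+c)^2 \le 3(a^2+b^2+c^2)$ and the ratio rule $\xi/T \le \mu_g/(96\tilde l_{g,1})$ to produce the $\tfrac{T\mu_g\alpha_k\lambda_k^2}{32}$ coefficients. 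Your treatment of the stochastic-versus-deterministic step ($\hat q_k^x$ versus $q_k^x$) is in fact slightly more explicit than the paper's.
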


\begin{proof}
From the smoothness of \(F\),
\[
\mathbb{E}\left[F\left(x_{k+1}\right)-F\left(x_{k}\right) \mid \mathcal{F}_{k}\right] \leq \mathbb{E}\left[\left.\left\langle\nabla F\left(x_{k}\right), x_{k+1}-x_{k}\right\rangle+\frac{l_{F, 1}}{2}d_{\Mc}^2\left(x_{k+1}, x_{k}\right) \right\vert \mathcal{F}_{k}\right]
\]

As \(q_{k}^{x}\) satisfies \(\mathbb{E}\left[x_{k+1}-x_{k} \mid \mathcal{F}_{k}\right]=\alpha_{k} q_{k}^{x}\),
\[
\begin{aligned}
& \mathbb{E}\left[F\left(x_{k+1}\right)-F\left(x_{k}\right) \mid \mathcal{F}_{k}\right]=-\xi \alpha_{k}\left\langle\nabla_{x} F\left(x_{k}\right), q_{k}^{x}\right\rangle+\frac{l_{F, 1}}{2} \mathbb{E}\left[d_{\Mc}^2\left(x_{k+1}, x_{k}\right) \mid \mathcal{F}_{k}\right] \\
& =-\frac{\xi \alpha_{k}}{2}\left(\left\|\nabla F\left(x_{k}\right)\right\|^{2}+\left\|q_{k}^{x}\right\|^{2}-\left\|\nabla F\left(x_{k}\right)-q_{k}^{x}\right\|^{2}\right)+\frac{l_{F, 1}}{2} \mathbb{E}\left[d_{\Mc}^2\left(x_{k+1}, x_{k}\right) \mid \mathcal{F}_{k}\right]
\end{aligned}
\]

Note that
\[
\mathbb{E}\left[d_{\Mc}^2\left(x_{k+1}, x_{k}\right)\right] \leq \xi^{2} \alpha_{k}^{2} \mathbb{E}\left[\left\|q_{k}^{x}\right\|^{2}+\xi^{2}\left(\alpha_{k}^{2} \sigma_{f}^{2}+\beta_{k}^{2} \sigma_{g}^{2}\right)\right.
\]

and thus with \ref{Fxk Estimation} we have
\[
\begin{gathered}
\mathbb{E}\left[F\left(x_{k+1}\right)-F\left(x_{k}\right) \mid \mathcal{F}_{k}\right] \leq-\frac{\xi \alpha_{k}}{2}\left\|\nabla F\left(x_{k}\right)\right\|^{2}-\frac{\xi \alpha_{k}}{4}\left\|q_{k}^{x}\right\|^{2} \\
+\frac{\xi \alpha_{k}}{2}\left\|\nabla F\left(x_{k}\right)-q_{k}^{x}\right\|^{2}+\frac{\xi^{2} l_{F, 1}}{2}\left(\alpha_{k}^{2} \sigma_{f}^{2}+\beta_{k}^{2} \sigma_{g}^{2}\right) .
\end{gathered}
\]

Next, we bound \(\left\|\nabla F\left(x_{k}\right)-q_{k}^{x}\right\|\) using the triangle inequality:
\[
\begin{aligned}
\left\|\nabla F\left(x_{k}\right)-q_{k}^{x}\right\| \leq \left\|\nabla_{x} f\left(x_{k}, y_{k+1}\right)-\nabla_{x} f\left(x_{k}, y_{\lambda, k}^{*}\right)\right\|+\lambda_{k}\left\|\nabla_{x} g\left(x_{k}, y_{k+1}\right)-\nabla_{x} g\left(x_{k}, y_{\lambda, k}^{*}\right)\right\| \\
+\lambda_{k}\left\|\nabla_{x} g\left(x_{k}, z_{k+1}\right)-\nabla_{x} g\left(x_{k}, y_{k}^{*}\right)\right\|+\left\|\nabla \mathcal{L}_{\lambda_{k}}^{*}\left(x_{k}\right)-\nabla F\left(x_{k}\right)\right\|
\end{aligned}
\]

From Lemma \ref{lemma:riemannian_gradient_bound}, the term \(\left\|\nabla \mathcal{L}_{\lambda_{k}}^{*}\left(x_{k}\right)-\nabla F\left(x_{k}\right)\right\|\) is bounded by \(C_{\lambda} / \lambda_{k}\). Combining with the regularity of \(f\) and \(g\) yields the following:
\[
\left\|\nabla F\left(x_{k}\right)-q_{k}^{x}\right\| \leq 2 l_{g, 1} \lambda_{k} d_{\Nc}(y_{k+1},y_{\lambda, k}^{*}) + l_{g, 1} \lambda_{k} d_{\Nc}(z_{k+1}, y_{k}^{*}) + C_{\lambda}/\lambda_{k}.
\]

Finally, from the Cauchy-Schwartz inequality \((a+b+c)^{2} \leq 3\left(a^{2}+b^{2}+c^{2}\right)\), we get
\[
\begin{aligned}
& \mathbb{E}\left[F\left(x_{k+1}\right)-F\left(x_{k}\right) \mid \mathcal{F}_{k}\right] \leq-\frac{\xi \alpha_{k}}{2}\left\|\nabla F\left(x_{k}\right)\right\|^{2}-\frac{\xi \alpha_{k}}{4}\left\|q_{k}^{x}\right\|^{2} \\
& \quad+\frac{\xi \alpha_{k}}{2} \cdot 3 C_{\lambda}^{2} \lambda_{k}^{-2}+3 \xi \alpha_{k} l_{g, 1} \lambda_{k}^{2} d_{\Nc}(z_{k+1}, y_{k}^{*})^{2}+6 \xi \alpha_{k} l_{g, 1} \lambda_{k}^{2} d_{\Nc}(y_{k+1}, y_{\lambda, k}^{*})^{2}+\frac{\xi^{2} l_{F, 1}}{2}\left(\alpha_{k}^{2} \sigma_{f}^{2}+\beta_{k}^{2} \sigma_{g}^{2}\right)
\end{aligned}
\]

The step-size condition \ref{Step-Size-Epsilon-T} concludes our claim.
\end{proof}

\subsection{Descent Lemma for \(y_k\) towards \(y^*_{\lambda, k}\)}

In this section, we provide the upper bounds of \(\mathcal{I}_{k+1}\) and \(d_{\Nc}(y_{k+1}, y_{\lambda, k}^{*})\) in the context of a manifold. The following step-size rule is adapted for the manifold's geometry:

\begin{equation}
\text{(step-size rule):} \quad \frac{\delta_{k}}{\lambda_{k}} \leq \frac{T \beta_{k} \mu_{g}}{32}, \text{ and } 2 \xi^{2} M l_{*, 1}^{2} \beta_{k}^{2} < \frac{T \beta_{k} \mu_{g}}{16}
\label{descent-lemma-size}
\end{equation}

\begin{proposition}
\label{Lemma B.2}
Given the step-size rule \ref{descent-lemma-step}, for each \(k \in \mathbb{N}\), we have
\[
\begin{aligned}
\mathbb{E}\left[\mathcal{I}_{k+1} \mid \mathcal{F}_{k}\right] \leq & \left(\left[
\frac{\sqrt{|\kappa|}d_{\Nc}(y_{\lambda, k+1}^{*}, y_{\lambda, k}^{*})}{\tanh(\sqrt{|\kappa|}d_{\Nc}(y_{\lambda, k+1}^{*}, y_{\lambda, k}^{*}))}
\right] + 2 \delta_{k} / \lambda_{k} + T \beta_{k} \mu_{g} / 8 + 2 M \xi^{2} l_{*, 1}^{2} \beta_{k}^{2}\right) \mathbb{E}\left[d_{\Nc}^2(y_{k+1}, y_{\lambda, k}^{*})\right] \\
& + O\left(\frac{\xi^{2} l_{*, 0}^{2} \alpha_{k}^{2}}{\mu_{g} T \beta_{k}}\right)\mathbb{E}\left[\|\tau_{x_{k}}(q_{k}^{x})\|^{2}\right] + O\left(\frac{\delta_{k}}{\lambda_{k}^{3}} \frac{l_{f, 0}^{2}}{\mu_{g}^{2}}\right) \\
& + O\left(\xi^{2} l_{*, 0}^{2}\right) \cdot \left(\alpha_{k}^{2} \sigma_{f}^{2} + \beta_{k}^{2} \sigma_{g}^{2}\right) \\
\end{aligned}
\]
Here, \(\mathcal{I}_{k}\) is adapted to consider the geodesic distance, and \(q_{k}^{x}\) is calculated in the tangent space of the manifold.
\end{proposition}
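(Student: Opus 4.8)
The plan is to track the evolution of the squared distance to the moving target, separating a curvature-dependent contraction coefficient from the error incurred when the target shifts from $y_{\lambda,k}^{*} = y_{\lambda_k}^*(x_k)$ to $y_{\lambda,k+1}^{*} = y_{\lambda_{k+1}}^*(x_{k+1})$. First I would apply the trigonometric distance comparison valid in an Alexandrov space of curvature bounded by $\kappa$ (the law-of-cosines bound of \citet{zhang2016first}) to the geodesic triangle with vertices $y_{k+1}$, $y_{\lambda,k}^{*}$, and $y_{\lambda,k+1}^{*}$, taking the angle at $y_{\lambda,k}^{*}$. Writing $\rho := d_{\Nc}(y_{\lambda,k}^{*}, y_{\lambda,k+1}^{*})$ for the target displacement, this yields
\[
\mathcal{I}_{k+1} = d_{\Nc}(y_{k+1}, y_{\lambda,k+1}^{*})^2 \le \frac{\sqrt{|\kappa|}\,\rho}{\tanh(\sqrt{|\kappa|}\,\rho)}\, d_{\Nc}(y_{k+1}, y_{\lambda,k}^{*})^2 - 2\bigl\langle \Exp_{y_{\lambda,k}^{*}}^{-1}(y_{k+1}),\, \Exp_{y_{\lambda,k}^{*}}^{-1}(y_{\lambda,k+1}^{*})\bigr\rangle + \rho^2 ,
\]
which already exposes the $\tanh$-factor of the claimed coefficient; the residual work is to bound the cross term and the movement term $\rho^2$ in conditional expectation.

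For the cross term I would take $v_k$ proportional to $\Exp_{y_{\lambda,k}^{*}}^{-1}(y_{k+1})$ — a vector decided before the update of $x_{k+1}$, with $\|v_k\|^2 \asymp d_{\Nc}(y_{k+1},y_{\lambda,k}^{*})^2$ — and invoke Lemma~\ref{A.6}, whose left-hand side is precisely $\mathbb{E}[\langle v_k, y_{\lambda_{k+1}}^*(x_{k+1}) - y_{\lambda_k}^*(x_k)\rangle \mid \mathcal{F}_k]$ (the sign being harmless since the internal Young step symmetrizes). This contributes a coefficient of order $\delta_k/\lambda_k + \xi\alpha_k\eta_k + M\xi^2 l_{\lambda_k,1}^2\beta_k^2$ on $d_{\Nc}(y_{k+1},y_{\lambda,k}^{*})^2$, together with the stated $\|q_k^x\|^2$, variance, and $\delta_k/\lambda_k^3$ bias terms. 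Choosing $\eta_k \asymp T\mu_g\lambda_k$ turns $\xi\alpha_k\eta_k$ into the advertised $T\beta_k\mu_g/8$ (recall $\beta_k = \alpha_k\lambda_k$), the bound $l_{\lambda_k,1}\le l_{*,1}$ from Lemma~\ref{A.3} upgrades the smoothness constant to produce the $2M\xi^2 l_{*,1}^2\beta_k^2$ term, and this same $\eta_k$ yields the $O\!\left(\xi^2 l_{*,0}^2\alpha_k^2/(\mu_g T\beta_k)\right)$ factor on $\mathbb{E}[\|q_k^x\|^2]$.

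For the movement term $\rho^2$ I would apply Lemma~\ref{lemma:distance_bound} with $(\lambda_1,x_1)=(\lambda_k,x_k)$ and $(\lambda_2,x_2)=(\lambda_{k+1},x_{k+1})$, giving $\rho \le \tfrac{2\delta_k}{\lambda_k\lambda_{k+1}}\tfrac{l_{f,0}}{\mu_g} + l_{\lambda,0}\,d_{\Mc}(x_{k+1},x_k)$, then square, take conditional expectation using $\mathbb{E}[d_{\Mc}(x_{k+1},x_k)^2\mid\mathcal{F}_k]\le \xi^2\alpha_k^2\,\mathbb{E}[\|q_k^x\|^2]+\xi^2(\alpha_k^2\sigma_f^2+\beta_k^2\sigma_g^2)$ and $l_{\lambda,0}\le l_{*,0}$; this feeds into the $\|q_k^x\|^2$ coefficient, the $O(\xi^2 l_{*,0}^2)(\alpha_k^2\sigma_f^2+\beta_k^2\sigma_g^2)$ variance term, and a higher-order $\delta_k$ bias contribution. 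Collecting everything and applying the step-size rule~\eqref{descent-lemma-size} — which enforces $\delta_k/\lambda_k \le T\beta_k\mu_g/32$ and $2\xi^2 M l_{*,1}^2\beta_k^2 < T\beta_k\mu_g/16$, keeping each error coefficient within the contraction budget — produces the stated inequality.

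The main obstacle I anticipate is the curvature bookkeeping. The factor $\frac{\sqrt{|\kappa|}\,\rho}{\tanh(\sqrt{|\kappa|}\,\rho)}$ depends on the \emph{random} displacement $\rho$, so I must certify that $\rho$ remains in a regime where this factor is uniformly close to $1$ (guaranteed by the decaying step-sizes) before it can be pulled in front of the expectation of $d_{\Nc}(y_{k+1},y_{\lambda,k}^{*})^2$. I must also reconcile the base point of the cross term — the inner product lives in $T_{y_{\lambda,k}^{*}}\Nc$ — with the base point implicit in Lemma~\ref{A.6} via parallel transport, checking that the transport distortion is absorbed into the smoothness constants $l_{\lambda_k,1}, l_{*,0}$ already present. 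The remaining delicate point is the stochastic coupling: ensuring that conditioning on $\mathcal{F}_k$ keeps $v_k$ admissible (decided before $x_{k+1}$) so that Lemma~\ref{A.6} applies with the noise driving $x_{k+1}$ handled exactly through its $\sigma_f^2,\sigma_g^2$ terms.
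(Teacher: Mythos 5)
Your proposal follows essentially the same route as the paper's proof: the Alexandrov law-of-cosines decomposition of $\mathcal{I}_{k+1}$ into the $\tanh$-weighted contraction term, the target-displacement term bounded via Lemma~\ref{lemma:distance_bound} together with the bound on $\mathbb{E}[d_{\Mc}^2(x_{k+1},x_k)\mid\mathcal{F}_k]$, and the cross term handled by Lemma~\ref{A.6} with $v_k=\Exp_{y_{\lambda,k}^{*}}^{-1}(y_{k+1})$ and $\eta_k\asymp T\mu_g\lambda_k/\xi$, followed by the step-size rule~\eqref{descent-lemma-size}. Your flagged concerns (the randomness inside the curvature factor, the base-point/transport bookkeeping, and the measurability of $v_k$ before the $x$-update) are legitimate and are in fact handled more casually in the paper than in your outline.
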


\begin{proof}
We start from the version of the distance and the inner product, considering the curvature \(\kappa\):
\[
\begin{aligned}
d_{\Nc}^2(y_{k+1}, y_{\lambda, k+1}^{*}) &= \underbrace{d_{\Nc}^2(y_{k+1}, y_{\lambda, k}^{*})}_{(i)} + \underbrace{d_{\Nc}^2(y_{\lambda, k+1}^{*}, y_{\lambda, k}^{*})}_{(ii)} \\
&- \underbrace{2d_{\Nc}(y_{k+1}, y_{\lambda, k}^{*})d_{\Nc}(y_{\lambda, k+1}^{*}, y_{\lambda, k}^{*})\cos(\angle(y_{k+1}, y_{\lambda, k}^{*}, y_{\lambda, k+1}^{*}))}_{(iii)}
\end{aligned}
\]

Incorporating the curvature \(\kappa\), we apply the Alexandrov space cosine law \citep{zhang2016first}:
\begin{align*}
&d_{\Nc}^2(y_{k+1}, y_{\lambda, k+1}^{*}) \\
&\leq \underbrace{\frac{\sqrt{|\kappa|}d_{\Nc}(y_{\lambda, k+1}^{*}, y_{\lambda, k}^{*})}{\tanh(\sqrt{|\kappa|}d_{\Nc}(y_{\lambda, k+1}^{*}, y_{\lambda, k}^{*}))} \left( d_{\Nc}^2(y_{k+1}, y_{\lambda, k}^{*})\right)}_{(i)} \\
&\quad + \underbrace{d_{\Nc}^2(y_{\lambda, k+1}^{*}, y_{\lambda, k}^{*})}_{(ii)} - \underbrace{2d_{\Nc}(y_{k+1}, y_{\lambda, k}^{*})d_{\Nc}(y_{\lambda, k+1}^{*}, y_{\lambda, k}^{*})\cos(\angle(y_{k+1}, y_{\lambda, k}^{*}, y_{\lambda, k+1}^{*}))}_{(iii)}
\end{align*}

The upper bound of \((i)\) is given in Proposition \ref{Lemma B.3} below. To bound \((ii)\), we invoke Lemma \ref{lemma:distance_bound}, yielding
\[
(ii) \ \mathbb{E}\left[d_{\Nc}^2\left(y_{\lambda, k+1}^{*}, y_{\lambda, k}^{*}\right) \mid \mathcal{F}_{k}\right] \leq \frac{4 \delta_{k}^{2}}{\lambda_{k}^{2} \lambda_{k+1}^{2}} \frac{l_{f, 0}^{2}}{\mu_{g}^{2}} + l_{*, 0}^{2} \mathbb{E}\left[d_{\Nc}^2\left(x_{k+1}, x_{k}\right) \mid \mathcal{F}_{k}\right]
\]

\[
\leq \frac{4 \delta_{k}^{2}}{\lambda_{k}^{4}} \frac{l_{f, 0}^{2}}{\mu_{g}^{2}} + \xi^{2} l_{*, 0}^{2} \left(\alpha_{k}^{2} \mathbb{E}\left[\|\tau_{x_{k}}(q_{k}^{x})\|^{2}\right] + \alpha_{k}^{2} \sigma_{f}^{2} + \beta_{k}^{2} \sigma_{g}^{2}\right)
\]

where \( \tau_{x_{k}}(q_{k}^{x}) \) denotes the parallel transport of the search direction \( q_{k}^{x} \) at the point \( x_{k} \) along the manifold.

For \( (iii) \), considering the smoothness property of \( y_{\lambda}^{*}(x) \) as per a generalized version of Lemma \ref{A.3}, and thus Lemma \ref{A.6}, we set \( v = \Exp_{y_{\lambda, k}^{*}}^{-1}(y_{k+1}) \) and \( \eta_{k} = T \mu_{g} \lambda_{k} /(16 \xi) \), we obtain

\[
\begin{aligned}
(iii) \leq & \left(2 \delta_{k} / \lambda_{k} + T \beta_{k} \mu_{g} / 8 + 2 M \xi^{2} l_{*, 1}^{2} \beta_{k}^{2}\right) \mathbb{E}\left[d_{\Nc}^2(y_{k+1}, y_{\lambda, k}^{*}) \mid \mathcal{F}_{k}\right] \\
& + \xi^{2}\left(\frac{\alpha_{k}^{2}}{2} + \frac{8 \alpha_{k}^{2} l_{*, 0}^{2}}{\mu_{g} T \beta_{k}}\right) \mathbb{E}\left[\|\tau_{x_{k}}(q_{k}^{x})\|^{2}\right] + \frac{\xi^{2}}{2} \left(\alpha_{k}^{2} \sigma_{f}^{2} + \beta_{k}^{2} \sigma_{g}^{2}\right) + \frac{2 \delta_{k}}{\lambda_{k}^{3}} \frac{l_{f, 0}^{2}}{\mu_{g}^{3}}
\end{aligned}
\]

We sum up the \( (i), (ii), (iii) \) to conclude

\[
\begin{aligned}
\mathbb{E}\left[\mathcal{I}_{k+1} \mid \mathcal{F}_{k}\right] \leq & \left(\left[
\frac{\sqrt{|\kappa|}d_{\Nc}(y_{\lambda, k+1}^{*}, y_{\lambda, k}^{*})}{\tanh(\sqrt{|\kappa|}d_{\Nc}(y_{\lambda, k+1}^{*}, y_{\lambda, k}^{*}))}
\right] + 2 \delta_{k} / \lambda_{k} + T \beta_{k} \mu_{g} / 8 + 2 M \xi^{2} l_{*, 1}^{2} \beta_{k}^{2}\right) \mathbb{E}\left[d_{\Nc}^2(y_{k+1}, y_{\lambda, k}^{*})\right] \\
& + O\left(\frac{\xi^{2} l_{*, 0}^{2} \alpha_{k}^{2}}{\mu_{g} T \beta_{k}}\right)\mathbb{E}\left[\|\tau_{x_{k}}(q_{k}^{x})\|^{2}\right] + O\left(\frac{\delta_{k}}{\lambda_{k}^{3}} \frac{l_{f, 0}^{2}}{\mu_{g}^{2}}\right) \\
& + O\left(\xi^{2} l_{*, 0}^{2}\right) \cdot \left(\alpha_{k}^{2} \sigma_{f}^{2} + \beta_{k}^{2} \sigma_{g}^{2}\right) \\
\end{aligned}
\]

Lastly, the step-size rule \ref{descent-lemma-size} yields our conclusion.
\end{proof}

Next, we note that $\alpha_{k}$ and $\beta_{k}$ are chosen to satisfy

\begin{equation}
\text{(step size rules):} \quad \alpha_{k} \leq \frac{1}{8 l_{f, 1}} \text { and } \beta_{k} \leq \frac{1}{8 l_{g, 1}}
\label{8lf1g1}
\end{equation}

Note that $\beta_{k} \leq \frac{1}{8 l_{g, 1}}$ is given from the step-size condition (3a), and $\alpha_{k} \leq \frac{1}{8 l_{g, 1} \lambda_{k}} \leq \frac{1}{8 l_{f, 1}}$ since $\lambda_{k} \geq l_{f, 1} / \mu_{g}$.

\begin{proposition}
\label{Lemma B.3}
Under the given step-size rules, it holds that for each $k \in \mathbb{N}$
\[
\mathbb{E}\left[d^2_\Nc\left(y_{k+1},y_{\lambda, k}^{*}\right) \mid \mathcal{F}_{k}\right] \leq\left(\left(\frac{\sqrt{\left| \kappa \right|} d_\Nc\left(y_{k}^{(t+1)}, y_{k}^{(t)}\right)}{\tanh(\sqrt{\left| \kappa \right|} d_\Nc\left(y_{k}^{(t+1)}, y_{k}^{(t)}\right))} \right)-3 T \mu_{g} \beta_{k} / 4\right) \mathcal{I}_{k}+T\left(\alpha_{k}^{2} \sigma_{f}^{2}+\beta_{k}^{2} \sigma_{g}^{2}\right)
\]
\end{proposition}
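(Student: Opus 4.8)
The plan is to read the inner loop (lines 5--8 of Algorithm~\ref{alg:RF2SA}) as $T$ steps of Riemannian stochastic gradient descent applied to the map $y\mapsto\mathcal{L}_{\lambda_k}(x_k,y)$, whose unique minimizer is the \emph{fixed} target $y_{\lambda,k}^*$ throughout the inner loop, since $x_k$ and $\lambda_k$ are frozen. By the discussion around \eqref{eq:argminimization} this map is $\tfrac{\lambda_k\mu_g}{2}$-strongly convex and $(l_{f,1}+\lambda_k l_{g,1})$-smooth in $y$, its Riemannian gradient at $y_{k,t}$ is exactly $q^y_{k,t}$, and $h_{fy}^{k,t}+\lambda_k h_{gy}^{k,t}$ is an unbiased estimate with variance at most $\sigma_f^2+\lambda_k^2\sigma_g^2$. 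First I would derive a single-step recursion for $d_\Nc(y_{k,t+1},y_{\lambda,k}^*)^2$ and then unroll it over $t=0,\dots,T-1$ starting from $y_{k,0}=y_k$, so that the initial distance is $d_\Nc(y_{k,0},y_{\lambda,k}^*)^2=\mathcal{I}_k$ and the terminal one is $d_\Nc(y_{k+1},y_{\lambda,k}^*)^2$.

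For the single step I would apply the Alexandrov cosine-law comparison of \citep{zhang2016first} to the geodesic triangle with vertices $y_{k,t}$, $y_{k,t+1}$, and $y_{\lambda,k}^*$. Writing the step as $y_{k,t+1}=\Exp_{y_{k,t}}(-\alpha_k v_{k,t})$ with $v_{k,t}=h_{fy}^{k,t}+\lambda_k h_{gy}^{k,t}$ and $u_{k,t}=\Exp_{y_{k,t}}^{-1}(y_{\lambda,k}^*)$, this yields
\[
d_\Nc(y_{k,t+1},y_{\lambda,k}^*)^2\le \zeta_{k,t}\,d_\Nc(y_{k,t},y_{\lambda,k}^*)^2+\alpha_k^2\|v_{k,t}\|^2+2\alpha_k\langle v_{k,t},u_{k,t}\rangle,
\]
where $\zeta_{k,t}=\tfrac{\sqrt{|\kappa|}\,d_\Nc(y_{k,t+1},y_{k,t})}{\tanh(\sqrt{|\kappa|}\,d_\Nc(y_{k,t+1},y_{k,t}))}$ is exactly the curvature factor appearing in the statement, and the curvature inflation acts only on the distance-to-target term while the step-length square $\alpha_k^2\|v_{k,t}\|^2$ carries no such factor. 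Taking expectation over the fresh inner noise (conditional on $y_{k,t}$) replaces $v_{k,t}$ by $q^y_{k,t}$ in the cross term and splits $\Exs\|v_{k,t}\|^2=\|q^y_{k,t}\|^2+(\text{variance})$, contributing the additive $\alpha_k^2\sigma_f^2+\beta_k^2\sigma_g^2$ with $\beta_k=\alpha_k\lambda_k$. I would then invoke the Riemannian strong-convexity/smoothness (co-coercivity) inequality for $q^y_{k,t}$ about its root $y_{\lambda,k}^*$: with $\mu=\tfrac{\lambda_k\mu_g}{2}$, $L=l_{f,1}+\lambda_k l_{g,1}$, the bound $\alpha_k\le 1/(8\lambda_k l_{g,1})\le 2/(\mu+L)$ implied by $\beta_k\le 1/(8l_{g,1})$ in \eqref{8lf1g1} lets the term $\alpha_k^2\|q^y_{k,t}\|^2$ be absorbed into the negative cross term, leaving the clean one-step bound $\Exs[d_\Nc(y_{k,t+1},y_{\lambda,k}^*)^2]\le(\zeta_{k,t}-\mu_g\beta_k)\,\Exs[d_\Nc(y_{k,t},y_{\lambda,k}^*)^2]+\alpha_k^2\sigma_f^2+\beta_k^2\sigma_g^2$.

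Finally I would unroll this contraction over the $T$ inner steps. Bounding each per-step factor by a representative $\zeta_k$ and using that the step-size conditions force $T\mu_g\beta_k\le 1/4$ (from $\beta_k\le\gamma_k\le 1/(4T\mu_g)$), a Bernoulli-type estimate gives $\prod_{t}(\zeta_{k,t}-\mu_g\beta_k)\le \zeta_k-\tfrac{3T\mu_g\beta_k}{4}$ to leading order, which is exactly the claimed contraction coefficient; and since every factor is at most $1$, the geometric noise sum telescopes to at most $T(\alpha_k^2\sigma_f^2+\beta_k^2\sigma_g^2)$. The main obstacle is the tension between the curvature factor $\zeta_{k,t}\ge 1$ and the $O(\mu_g\beta_k)$ contraction: one must show the inflation $\zeta_{k,t}-1$ is genuinely higher order, namely $O(|\kappa|\,d_\Nc(y_{k,t+1},y_{k,t})^2)=O(|\kappa|\beta_k^2)$ because each step has length $O(\beta_k)$, so that it cannot swamp the linear-in-$\beta_k$ decrease, and that the $T$-fold product of these factors collapses to a single first-order $\zeta_k$ rather than a $\zeta_k^T$ blow-up. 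Extracting the constant $3/4$ (instead of $1$) from the Bernoulli expansion is precisely what the budget $T\mu_g\beta_k\le 1/4$ buys, and is the step I would verify most carefully; a secondary subtlety is justifying the Riemannian co-coercivity inequality with curvature-robust constants, for which the twice-differentiability and Hessian-Lipschitz Assumptions~\ref{assumption:lower_level} and \ref{assumption:f_second_order} are used.
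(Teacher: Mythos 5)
Your proposal is correct and follows essentially the same route as the paper's proof: the Alexandrov cosine law applied to each inner step, the coercivity/co-coercivity inequality for the $\tfrac{\lambda_k\mu_g}{2}$-strongly convex and $(l_{f,1}+\lambda_k l_{g,1})$-smooth Lagrangian, absorption of the squared-gradient term via the budget $\alpha_k l_{f,1}+\beta_k l_{g,1}\le \tfrac{1}{4}$, and a $T$-fold unrolling of the resulting contraction. The only difference is constant bookkeeping: the paper extracts $\tfrac{3}{4}\mu_g\beta_k$ per step directly by splitting the cross term, whereas your claimed clean per-step contraction of a full $\mu_g\beta_k$ is marginally too strong (absorbing $\alpha_k^2\|q^y_{k,t}\|^2$ necessarily spends a fraction of the cross term, yielding at best roughly $\tfrac{7}{8}\mu_g\beta_k$), but your Bernoulli product estimate under $T\mu_g\beta_k\le\tfrac{1}{4}$ has enough slack that this weaker per-step constant still delivers the stated total contraction of $\tfrac{3}{4}T\mu_g\beta_k$.
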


\begin{proof}
Since the expected value of the difference between successive iterations in a manifold can be expressed as \(\mathbb{E}\left[\mathrm{Exp}^{-1}_{y_{k}^{(t+1)}}(y_{k}^{(t)}) \mid \mathcal{F}_{k}\right] = -\alpha_{k} \nabla_{y} q_{k}^{(t)} = -\alpha_{k} \nabla_{y} \mathcal{L}_{\lambda_{k}}(x_{k}, y_{k}^{(t)})\), we have

\[
\begin{aligned}
& \mathbb{E}\left[d^2_\Nc\left(y_{k}^{(t+1)},y_{\lambda, k}^{*}\right) \mid \mathcal{F}_{k}\right] \\
& \quad \leq \frac{\sqrt{\left| \kappa \right|} d_\Nc\left(y_{k}^{(t+1)},y_{k}^{(t)}\right)}{\tanh(\sqrt{\left| \kappa \right|} d_\Nc\left(y_{k}^{(t+1)},y_{k}^{(t)}\right))} \left( d^2_\Nc\left(y_{k}^{(t)},y_{\lambda, k}^{*}\right)\right) + d^2_\Nc\left(y_{k}^{(t+1)},y_{k}^{(t)}\right) \\
& \quad - 2 d_\Nc\left(y_{k}^{(t)},y_{\lambda, k}^{*}\right) d_\Nc\left(y_{k}^{(t+1)},y_{k}^{(t)}\right)\cos(\angle(y_{k}^{(t)}, y_{\lambda, k}^{*}, y_{k}^{(t+1)}))
\end{aligned}
\]

Given that \(\lambda_{0} \geq 2 \mu_{f} / \mu_{g}\), and all \(\mathcal{L}_{k}\) is strongly convex in \(y\), the following inequality holds

\[
\max \left(\frac{\lambda_{k} \mu_{g}}{2} d_\Nc^2\left(y_{k}^{(t)}, y_{\lambda, k}^{*}\right), \frac{1}{l_{f, 1}+\lambda_{k} l_{g, 1}} \|\nabla_{y} q_{k}^{(t)}\|^2 \right) \leq \langle \nabla_{y} q_{k}^{(t)}, \mathrm{Exp}^{-1}_{y_{k}^{(t)}}(y_{\lambda, k}^{*}) \rangle
\]

Using the Alexandrov space result to approximate the expected squared distance, we have

\[
\mathbb{E}\left[d_\Nc^2\left(y_{k}^{(t+1)}, y_{\lambda, k}^{*}\right) \mid \mathcal{F}_{k}\right] \leq \left(\frac{\sqrt{\left| \kappa \right|} d_\Nc\left(y_{k}^{(t+1)}, y_{k}^{(t)}\right)}{\tanh(\sqrt{\left| \kappa \right|} d_\Nc\left(y_{k}^{(t+1)}, y_{k}^{(t)}\right))} -\frac{3 \mu_{g} \beta_{k}}{4}\right) d_\Nc^2\left(y_{k}^{(t)}, y_{\lambda, k}^{*}\right) + \alpha_{k}^{2} \sigma_{f}^{2} + \beta_{k}^{2} \sigma_{g}^{2},
\]

where \(\alpha_{k}\left(l_{f, 1}+\lambda_{k} l_{g, 1}\right) = \alpha_{k} l_{f, 1} + \beta_{k} l_{g, 1} \leq 1/4\) under condition \ref{8lf1g1}. Repeating this process \(T\) times as per the algorithm leads to the conclusion in Proposition \ref{Lemma B.3}, where \(y_{k+1} = y_{k}^{(T)}\) and \(y_{k} = y_{k}^{(0)}\).
\end{proof}

\subsection{Descent Lemma for $z_{k}$ towards $y_{k}^{*}$}

Similar to the previous section, we provide the upper bound of $\mathcal{J}_{k+1}$ first and then estimate $d_{\Nc}(z_{k+1},y_{k}^{*})$ that appears in the upper bound. We work with the following step-size condition:

\begin{equation}
\text { (step-size rule): } \quad 2 M l_{*, 1}^{2} \xi^{2} \beta_{k}^{2} \leq T \mu_{g} \gamma_{k} / 16
\label{descent-lemma-step}
\end{equation}

This condition holds since $\beta_{k} \leq \gamma_{k}$, and $\beta_{k} \leq \frac{1}{4 T \mu_{g}}$ and $\frac{\xi^{2}}{T^{2}} \leq \frac{\mu_{g}^{2}}{8}\left(M l_{*, 1}^{2}\right)^{-1}$.

\begin{proposition}
\label{Lemma B.4}
Under the step-size rule \ref{descent-lemma-step}, at each $k^{\text {th }}$ iteration, the following holds:
\[
\begin{aligned}
\mathbb{E}\left[\mathcal{J}_{k+1} \mid \mathcal{F}_{k}\right] \leq & \left(\frac{\sqrt{\left| \kappa \right|} d_\Nc\left(y_{k+1}^{*},y_{k}^{*}\right)}{\tanh(\sqrt{\left| \kappa \right|} d_\Nc\left(y_{k+1}^{*},y_{k}^{*}\right))} + \frac{3 T \gamma_{k} \mu_{g}}{8}\right) \cdot \mathbb{E}\left[d_\Nc^2\left(z_{k+1},y_{k}^{*}\right) \mid \mathcal{F}_{k}\right] \\
& +O\left(\frac{\xi^{2} \alpha_{k}^{2} l_{*, 0}^{2}}{T \mu_{g} \gamma_{k}}\right)\left\|q_{k}^{x}\right\|^{2}+O\left(\xi^{2} l_{*, 0}^{2}\right)\left(\alpha_{k}^{2} \sigma_{f}^{2}+\beta_{k}^{2} \sigma_{g}^{2}\right)
\end{aligned}
\]
\end{proposition}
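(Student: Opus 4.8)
The plan is to follow the template of the proof of Proposition~\ref{Lemma B.2} almost verbatim, under the substitutions $y_k \mapsto z_k$, the regularized target $y_{\lambda,k}^*$ replaced by the \emph{true} inner optimum $y_k^* := y^*(x_k)$, and the effective inner step-size $\beta_k$ replaced by $\gamma_k$ (the update $z_{k,t+1} = \Exp_{z_{k,t}}(-\gamma_k h_{gz}^{k,t})$ is plain Riemannian gradient descent on the $\mu_g$-strongly convex map $g(x_k,\cdot)$ and carries no $\lambda$-scaling). The essential structural simplification is that $y^*(x)$ depends only on $x$, not on $\lambda$; consequently none of the $\lambda$-drift contributions that appeared in Proposition~\ref{Lemma B.2} (the $\delta_k/\lambda_k$ term and the $\delta_k l_{f,0}^2/(\lambda_k^3\mu_g^2)$ term) arise here, which is precisely why the stated bound is cleaner.

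First I would write the Alexandrov cosine-law decomposition of $\mathcal{J}_{k+1} = d_\Nc^2(z_{k+1}, y_{k+1}^*)$ using $y_k^*$ as the intermediate vertex, exactly as at the opening of the proof of Proposition~\ref{Lemma B.2}:
\begin{equation*}
d_\Nc^2(z_{k+1}, y_{k+1}^*) \le \frac{\sqrt{|\kappa|}\,d_\Nc(y_{k+1}^*, y_k^*)}{\tanh\!\bigl(\sqrt{|\kappa|}\,d_\Nc(y_{k+1}^*, y_k^*)\bigr)}\,d_\Nc^2(z_{k+1}, y_k^*) + d_\Nc^2(y_{k+1}^*, y_k^*) - 2\,d_\Nc(z_{k+1}, y_k^*)\,d_\Nc(y_{k+1}^*, y_k^*)\cos(\angle(z_{k+1}, y_k^*, y_{k+1}^*)),
\end{equation*}
labelling the three summands $(i),(ii),(iii)$. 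Term $(i)$ is left untouched: its coefficient is the curvature factor appearing in the claim, and $d_\Nc^2(z_{k+1}, y_k^*)$ will later be contracted against $\mathcal{J}_k$ by the $z$-analogue of Proposition~\ref{Lemma B.3}. For $(ii)$ I would invoke Lemma~\ref{A.4} directly, which bounds $\Exs[d_\Nc^2(y^*(x_{k+1}), y^*(x_k)) \mid \mathcal{F}_k]$ by $\xi^2 l_{*,0}^2\bigl(\alpha_k^2\Exs[\|q_k^x\|^2] + \alpha_k^2\sigma_f^2 + \beta_k^2\sigma_g^2\bigr)$, supplying the $\|q_k^x\|^2$ and noise contributions at the stated order.

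The crux is term $(iii)$. Here I would set $v_k = \Exp_{y_k^*}^{-1}(z_{k+1})$, so that $\|v_k\|^2 = d_\Nc^2(z_{k+1}, y_k^*)$, and apply Lemma~\ref{A.5}---the $y^*$-smoothness inner-product bound, which, unlike Lemma~\ref{A.6}, contains no $\delta_k$ terms---to control $\Exs[\langle v_k, \Exp^{-1}(y^*(x_{k+1})) - \Exp^{-1}(y^*(x_k))\rangle \mid \mathcal{F}_k]$. Choosing $\eta_k \asymp T\mu_g\gamma_k/(\xi\alpha_k)$ makes the $\|v_k\|^2$-coefficient scale like $T\mu_g\gamma_k$ and simultaneously turns the $\tfrac{\xi\alpha_k l_{*,0}^2}{4\eta_k}$ factor into the advertised $O\!\left(\tfrac{\xi^2\alpha_k^2 l_{*,0}^2}{T\mu_g\gamma_k}\right)$ multiplier of $\|q_k^x\|^2$. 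The residual $2M\xi^2 l_{*,1}^2\beta_k^2$ term from Lemma~\ref{A.5} is absorbed into the contraction budget via the step-size rule~\ref{descent-lemma-step}, i.e.\ $2Ml_{*,1}^2\xi^2\beta_k^2 \le T\mu_g\gamma_k/16$; combining this with the $\eta_k$-term fixes the coefficient $\tfrac{3T\gamma_k\mu_g}{8}$ on $d_\Nc^2(z_{k+1}, y_k^*)$. Summing $(i)$–$(iii)$ and collecting the noise terms then gives the claim.

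The main obstacle I anticipate is purely the constant bookkeeping in term $(iii)$: one must pick the exact constant in $\eta_k$ so that the curvature-corrected cross term is dominated by the $z$-contraction rate $T\mu_g\gamma_k$ while keeping the induced $\|q_k^x\|^2$ coefficient at the stated order, and then confirm that the smoothness remainder $2M\xi^2 l_{*,1}^2\beta_k^2$ fits inside the budget via~\ref{descent-lemma-step}. The curvature prefactor itself needs no special handling, since $\frac{\sqrt{|\kappa|}\,d}{\tanh(\sqrt{|\kappa|}\,d)} \to 1$ as the target displacement $d_\Nc(y_{k+1}^*, y_k^*) = O(\text{step})$ shrinks, exactly as in Propositions~\ref{Lemma B.2} and~\ref{Lemma B.3}.
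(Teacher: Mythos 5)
Your proposal matches the paper's proof essentially step for step: the same Alexandrov cosine-law decomposition into $(i)$, $(ii)$, $(iii)$ with $y_k^*$ as the intermediate vertex, the same Lipschitz bound (Lemma~\ref{A.4}, equivalently Lemma~\ref{Lemma 2}) for $(ii)$, and the same application of Lemma~\ref{A.5} with $v_k = \Exp_{y_k^*}^{-1}(z_{k+1})$ and $\eta_k = T\mu_g\gamma_k/(8\xi\alpha_k)$ for $(iii)$, with the $2M\xi^2 l_{*,1}^2\beta_k^2$ remainder absorbed by the step-size rule~\ref{descent-lemma-step} to yield the coefficient $3T\gamma_k\mu_g/8$. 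You also correctly identified the key structural point — that $y^*$ is $\lambda$-independent, so Lemma~\ref{A.5} rather than Lemma~\ref{A.6} applies and no $\delta_k$ terms appear — which is exactly why the paper's bound here is cleaner than in Proposition~\ref{Lemma B.2}.
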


\begin{proof}
We estimate each term in the following decomposition.

\[
\begin{aligned}
& d_\Nc\left(z_{k+1}, y_{k+1}^{*}\right)^{2} \leq \underbrace{\frac{\sqrt{\left| \kappa \right|} d_\Nc\left(y_{k+1}^{*},y_{k}^{*}\right)}{\tanh(\sqrt{\left| \kappa \right|} d_\Nc\left(y_{k+1}^{*},y_{k}^{*}\right))} \left( d_\Nc\left(z_{k+1},y_{k}^{*}\right)^{2}\right)}_{(i)} \\
& + \underbrace{d_\Nc\left(y_{k+1}^{*},y_{k}^{*}\right)^{2}}_{(ii)} - \underbrace{2 d_\Nc\left(z_{k+1},y_{k}^{*}\right)d_\Nc\left(y_{k+1}^{*},y_{k}^{*}\right)\cos(\angle(z_{k+1}, y_{k}^{*}, y_{k+1}^{*}))}_{(iii)}
\end{aligned}
\]

Lemma 2 (\ref{Lemma 2}) implies that

$$
(ii) \ \mathbb{E}\left[\left\|y_{k+1}^{*}-y_{k}^{*}\right\|^{2} \mid \mathcal{F}_{k}\right] \leq l_{*, 0}^{2} \xi^{2}\left(\alpha_{k}^{2}\left\|\nabla_{x} q_{k}\right\|^{2}+\alpha_{k}^{2} \sigma_{f}^{2}+\beta_{k}^{2} \sigma_{g}^{2}\right)
$$

For $(i i i)$, we recall Lemma \ref{A.5} with $v_{k}=z_{k+1}-y_{k}^{*}$ and $\eta_{k}=T \mu_{g} \gamma_{k} /\left(8 \xi \alpha_{k}\right)$, we have

\[
(iii) \ \langle \Exp_{y_{k}^{*}}^{-1}(z_{k+1}), \Exp_{y_{k}^{*}}^{-1}(y_{k+1}^{*}) \rangle \leq \left( T \gamma_{k} \mu_{g} / 8 + M \xi^{2} l_{*, 1}^{2} \beta_{k}^{2} \right) \mathbb{E}\left[ d_{\Nc}^2(z_{k+1}, y_{k}^{*}) \mid \mathcal{F}_{k} \right]
\]
\[
+ \left( \frac{\xi^{2} \alpha_{k}^{2}}{4} + \frac{2 \xi^{2} \alpha_{k}^{2} l_{*, 0}^{2}}{T \mu_{g} \gamma_{k}} \right) \|\nabla_{x} q_{k}\|^2 + \frac{\xi^{2}}{4}\left( \alpha_{k}^{2} \sigma_{f}^{2} + \beta_{k}^{2} \sigma_{g}^{2} \right)
\]

The above bounds and Proposition \ref{Lemma B.5} imply that

\[
\begin{aligned}
\mathbb{E}\left[\mathcal{J}_{k+1} \mid \mathcal{F}_{k}\right] \leq & \left(\frac{\sqrt{\left| \kappa \right|} d_\Nc\left(y_{k+1}^{*}, y_{k}^{*}\right)}{\tanh(\sqrt{\left| \kappa \right|} d_\Nc\left(y_{k+1}^{*}, y_{k}^{*}\right))}+\frac{T \gamma_{k} \mu_{g}}{4}+2 M \xi^{2} l_{*, 1}^{2} \beta_{k}^{2}\right) \cdot \mathbb{E}\left[d_\Nc\left(z_{k+1}-y_{k}^{*}\right)^{2} \mid \mathcal{F}_{k}\right] \\
& +\xi^{2} \alpha_{k}^{2} \cdot\left(l_{*, 0}^{2}+\frac{4 l_{*, 0}^{2}}{T \mu_{g} \gamma_{k}}+\frac{1}{2}\right)\left\|q_{k}^{x}\right\|^{2}+\xi^{2} \cdot\left(\frac{1}{2}+l_{*, 0}^{2}\right)\left(\alpha_{k}^{2} \sigma_{f}^{2}+\beta_{k}^{2} \sigma_{g}^{2}\right)
\end{aligned}
\]

Using \ref{descent-lemma-step}, we conclude.
\end{proof}

Next, $\gamma_{k}$ is chosen to satisfy the following step-size rules:

\begin{equation}
\text { (step-size rule): } \quad l_{g, 1} \gamma_{k} \leq 1 / 4, \quad T \mu_{g} \gamma_{k} \leq 1 / 4
\label{lgmug}
\end{equation}

which directly comes from \ref{thm2.1}.

\begin{proposition}
\label{Lemma B.5}
If \ref{lgmug} holds, then for each $k \in \mathbb{N}$, the following holds:
$$
\mathbb{E}\left[d_{\Nc}\left(z_{k+1}, y_{k}^{*}\right)^{2} \mid \mathcal{F}_{k}\right] \leq\left(C - 3 T \mu_{g} \gamma_{k} / 4\right) \mathcal{J}_{k}+T \gamma_{k}^{2} \sigma_{g}^{2}
$$
\end{proposition}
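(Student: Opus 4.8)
The plan is to mirror the argument of Proposition~\ref{Lemma B.3}, replacing the regularized inner objective $\mathcal{L}_{\lambda_k}(x_k,\cdot)$ by the true lower-level objective $g(x_k,\cdot)$, whose minimizer is $y_k^* = y^*(x_k)$. The structural facts I would lean on are that $g(x_k,\cdot)$ is $\mu_g$-geodesically strongly convex (Assumption~\ref{assumption:lower_level}) and $l_{g,1}$-smooth (Assumption~\ref{assumption:objective_functions}), that the inner $z$-update $z_{k,t+1} = \Exp_{z_{k,t}}(-\gamma_k h_{gz}^{k,t})$ uses only the stochastic $g$-gradient $h_{gz}^{k,t} = \grad_y g(x_k, z_{k,t};\phi_z^{k,t})$, and that this estimator is unbiased with variance at most $\sigma_g^2$ (Assumption~\ref{assumption:gradient_access}). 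Consequently $\mathbb{E}[\Exp^{-1}_{z_{k,t}}(z_{k,t+1}) \mid \mathcal{F}_k] = -\gamma_k \grad_y g(x_k, z_{k,t})$ and $\mathbb{E}[d_{\Nc}^2(z_{k,t+1}, z_{k,t}) \mid \mathcal{F}_k] \le \gamma_k^2(\|\grad_y g(x_k,z_{k,t})\|^2 + \sigma_g^2)$, since no $\lambda_k$-scaling and no $f$-gradient enter the $z$ iterate.

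First I would fix an inner index $t$ and apply the Alexandrov-space law of cosines \citep{zhang2016first} to the geodesic triangle with vertices $z_{k,t}$, $z_{k,t+1}$, and $y_k^*$, exactly as in Proposition~\ref{Lemma B.3}. This bounds $d_{\Nc}^2(z_{k,t+1}, y_k^*)$ by the curvature-inflated term $\zeta_\kappa\, d_{\Nc}^2(z_{k,t}, y_k^*)$ — where $\zeta_\kappa = \frac{\sqrt{|\kappa|}\,d_{\Nc}(z_{k,t+1},z_{k,t})}{\tanh(\sqrt{|\kappa|}\,d_{\Nc}(z_{k,t+1},z_{k,t}))} \ge 1$ plays the role of the constant $C$ in the statement — plus the step-length term $d_{\Nc}^2(z_{k,t+1},z_{k,t})$ and the cross term $-2d_{\Nc}(z_{k,t},y_k^*)\,d_{\Nc}(z_{k,t+1},z_{k,t})\cos(\angle)$.

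Next I would take the conditional expectation. The cross term contributes $2\gamma_k \langle \grad_y g(x_k, z_{k,t}), \Exp^{-1}_{z_{k,t}}(y_k^*)\rangle$; since $\grad_y g(x_k, y_k^*) = 0$, geodesic strong convexity yields a lower bound of order $\mu_g d_{\Nc}^2(z_{k,t}, y_k^*)$ on this inner product (up to sign), while $l_{g,1}$-smoothness controls the $\|\grad_y g(x_k, z_{k,t})\|^2$ appearing in the step-length term. Combining these two bounds and using the step-size rule~\ref{lgmug} (namely $l_{g,1}\gamma_k \le 1/4$) to absorb the $\gamma_k^2\|\grad_y g\|^2$ contribution into the strongly convex decrease gives the one-step recursion $\mathbb{E}[d_{\Nc}^2(z_{k,t+1},y_k^*)\mid\mathcal{F}_k] \le (\zeta_\kappa - 3\mu_g\gamma_k/4)\,d_{\Nc}^2(z_{k,t},y_k^*) + \gamma_k^2\sigma_g^2$, precisely the per-step estimate appearing in the $T$-fold loop of Proposition~\ref{Lemma B.3}. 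Finally I would iterate over $t=0,\dots,T-1$ with $z_{k,0}=z_k$ and $z_{k,T}=z_{k+1}$, telescoping the $T$ contraction factors into the coefficient $(C - 3T\mu_g\gamma_k/4)$ and summing the $T$ independent noise injections into $T\gamma_k^2\sigma_g^2$, recovering the claimed bound with $\mathcal{J}_k = d_{\Nc}^2(z_k,y_k^*)$.

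The main obstacle is the bookkeeping of the curvature factor $\zeta_\kappa$ across the $T$ inner steps: because it depends on the (random) per-step displacement $d_{\Nc}(z_{k,t+1},z_{k,t})$, one must argue — as in Proposition~\ref{Lemma B.3} and using $T\mu_g\gamma_k \le 1/4$ from~\ref{lgmug} — that $\zeta_\kappa$ stays uniformly close to $1$, so that the product of $T$ per-step factors linearizes to $C - 3T\mu_g\gamma_k/4$ without the curvature overturning the net contraction. This is exactly the point connecting to the paper's later remark that the effect of $\kappa$ on the final inequality is negligible, and it is where the small-step-size regime enforced by~\ref{lgmug} is essential.
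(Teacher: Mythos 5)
Your proposal follows essentially the same route as the paper's proof: a per-inner-step Alexandrov cosine-law bound for the triangle $(z_{k,t}, z_{k,t+1}, y_k^*)$, unbiasedness and bounded variance of $h_{gz}^{k,t}$, coercivity/co-coercivity of the $\mu_g$-strongly convex and $l_{g,1}$-smooth function $g(x_k,\cdot)$ with $\grad_y g(x_k,y_k^*)=0$ combined with the rule $l_{g,1}\gamma_k\le 1/4$ to produce the $3\mu_g\gamma_k/4$ contraction, followed by a $T$-fold iteration with the curvature distortion factor bounded uniformly by $C$. The only (immaterial) divergence is that you evaluate the distortion factor at the per-step displacement $d_{\Nc}(z_{k,t+1},z_{k,t})$, mirroring Proposition \ref{Lemma B.3}, whereas the paper's proof of this proposition evaluates it at $d_{\Nc}(z_{k,t},y_k^*)$; both variants are absorbed identically into the uniform bound by $C$.
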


\begin{proof}
We analyze one step iteration of the inner loop: for each $t=0, \cdots, T-1$.

Using the Alexandrov space cosine law:
\[
\begin{aligned}
d_{\Nc}\left(z_{k}^{(t+1)}, y_{k}^{*}\right)^{2} &\leq \frac{\sqrt{\left|\kappa\right|} d_{\Nc}\left(z_{k}^{(t)}, y_{k}^{*}\right)}{\tanh\left(\sqrt{\left|\kappa\right|} d_{\Nc}\left(z_{k}^{(t)}, y_{k}^{*}\right)\right)} d_{\Nc}\left(z_{k}^{(t)}, y_{k}^{*}\right)^{2} + \gamma_{k}^{2} d_{\Nc}\left(h_{g z}^{k, t}\right)^{2} \\
&\quad - 2 \gamma_{k} d_{\Nc}\left(h_{g z}^{k, t}\right)d_{\Nc}\left(z_{k}^{(t)}, y_{k}^{*}\right) \cos(\angle(h_{g z}^{k, t}, z_{k}^{(t)}, y_{k}^{*})),
\end{aligned}
\]

Here, $z_{k+1}=z_{k}^{(T)}$ and $z_{k}=z_{k}^{(0)}$. Note that $\mathbb{E}\left[h_{g z}^{k, t}\right]=\nabla_{y} g\left(x_{k}, z_{k}^{(t)}\right)=\nabla_{y} g_{k}\left(z_{k}^{(t)}\right)$ where $g_{k}\left(z_{k}^{(t)}\right) \coloneqq g\left(x_{k}, z_{k}^{(t)}\right)$. Taking expectation,

\[
\begin{aligned}
\mathbb{E}\left[d_{\Nc}\left(z_{k}^{(t+1)}, y_{k}^{*}\right)^{2} \mid \mathcal{F}_{k}\right] &\leq \frac{\sqrt{\left|\kappa\right|} d_{\Nc}\left(z_{k}^{(t)}, y_{k}^{*}\right)}{\tanh\left(\sqrt{\left|\kappa\right|} d_{\Nc}\left(z_{k}^{(t)}, y_{k}^{*}\right)\right)} d_{\Nc}\left(z_{k}^{(t)}, y_{k}^{*}\right)^{2} \\
&\quad + \gamma_{k}^{2} \sigma_{g}^{2} - \gamma_{k}(1-l_{g, 1} \gamma_{k})d_{\Nc}\left(\nabla g_{k}\left(z_{k}^{(t)}\right)\right)d_{\Nc}\left(z_{k}^{(t)}, y_{k}^{*}\right),
\end{aligned}
\]

The strong convexity and smoothness of $g_{k}$ imply the coercivity and co-coercivity \citep{nesterov2018lectures}, that is,

$$
\max \left(\mu_{g}d_{\Nc}^2(z_{k}^{(t)}, y_{k}^{*}), \frac{1}{l_{g, 1}}d_{\Nc}^2\left(\nabla g_{k}\left(z_{k}^{(t)}\right),\nabla g_{k}\left(y_{k}^{*}\right)\right)\right) \leq d_{\Nc}\left(\nabla g_{k}\left(z_{k}^{(t)}\right), \nabla g_{k}\left(y_{k}^{*}\right), z_{k}^{(t)}, y_{k}^{*}\right)
$$

Note that $y_{k}^{*}$ minimizes $g_{k}(y)$. Use this to cancel out $\gamma_{k}^{2}d_{\Nc}\left(\nabla g_{k}\left(z_{k}^{(t)}\right)\right)^{2}$, yielding

\[
\begin{aligned}
\mathbb{E}\left[d_{\Nc}^2(z_{k}^{(t+1)}, y_{k}^{*}) \mid \mathcal{F}_{k}\right] & \leq \frac{\sqrt{\left|\kappa\right|} d_{\Nc}(z_{k}^{(t)}, y_{k}^{*})}{\tanh\left(\sqrt{\left|\kappa\right|} d_{\Nc}(z_{k}^{(t)}, y_{k}^{*})\right)} d_{\Nc}^2(z_{k}^{(t)}, y_{k}^{*}) \\
&\quad + \gamma_{k}^{2} \sigma_{g}^{2} - \gamma_{k}(1-l_{g, 1} \gamma_{k})d_{\Nc}\left(\nabla g_{k}\left(z_{k}^{(t)}\right), \nabla g_{k}\left(y_{k}^{*}\right), z_{k}^{(t)}, y_{k}^{*}\right) \\
& \leq \left(\frac{\sqrt{\left|\kappa\right|} d_{\Nc}(z_{k}^{(t)}, y_{k}^{*})}{\tanh\left(\sqrt{\left|\kappa\right|} d_{\Nc}(z_{k}^{(t)}, y_{k}^{*})\right)} -\frac{3 \mu_{g} \gamma_{k}}{4}\right)d_{\Nc}^2(z_{k}^{(t)}, y_{k}^{*}) + \gamma_{k}^{2} \sigma_{g}^{2} .
\end{aligned}
\]

For this to hold we need step-size condition \ref{lgmug}. We can repeat this $T$ times and get the result. Here we're using $\frac{\sqrt{\left|\kappa\right|} d_{\Nc}(z_{k}^{(t)}, y_{k}^{*})}{\tanh\left(\sqrt{\left|\kappa\right|} d_{\Nc}(z_{k}^{(t)}, y_{k}^{*})\right)} < C$ for some $C > 0$ for all values of $t$.
\end{proof}

\subsection{Proof of Theorem 2}
\label{Theorem2Proof}

Let us revisit the potential function \({V}_{k}\) within the Riemannian context:

$$
\begin{aligned}
{V}_{k+1}-{V}_{k} &= F(x_{k+1}) - F(x_k) + \lambda_{k+1} l_{g, 1} \mathcal{I}_{k+1} - \lambda_{k} l_{g, 1} \mathcal{I}_{k} \\
&+\frac{\lambda_{k+1} l_{g, 1}}{2} \mathcal{J}_{k+1} - \frac{\lambda_{k} l_{g, 1}}{2} \mathcal{J}_{k},
\end{aligned}
$$

Utilizing an adaptation of Proposition \ref{Lemma B.1} and reorganizing terms, we obtain:

$$
\begin{aligned}
\mathbb{E}[{V}_{k+1}-{V}_{k} \mid \mathcal{F}_{k}] \leq & -\frac{\xi \alpha_{k}}{2}\|\grad F(x_{k})\|^{2} - \frac{\xi \alpha_{k}}{4} \mathbb{E}[\|q_{k}^{x}\|^{2} \mid \mathcal{F}_{k}] + \frac{\xi \alpha_{k}}{2} \cdot 3 C_{\lambda}^{2} \lambda_{k}^{-2} \\
&+ \frac{\xi^{2} l_{F, 1}}{2}(\alpha_{k}^{2} \sigma_{f}^{2} + \beta_{k}^{2} \sigma_{g}^{2}) \\
&+ l_{g, 1} \underbrace{\mathbb{E}[\lambda_{k+1} \mathcal{I}_{k+1} + \frac{\lambda_{k} T \beta_{k} \mu_{g}}{16} d_{\Nc}(y_{k+1}, y_{\lambda, k}^{*})^2 - \lambda_{k} \mathcal{I}_{k} \mid \mathcal{F}_{k}]}_{(i)} \\
&+ \frac{l_{g, 1}}{2} \underbrace{\mathbb{E}[\lambda_{k+1} \mathcal{J}_{k+1} + \frac{\lambda_{k} T \gamma_{k} \mu_{g}}{32} d_{\Nc}(z_{k+1}, y_{k}^{*})^2 - \lambda_{k} \mathcal{J}_{k} \mid \mathcal{F}_{k}]}_{(ii)},
\end{aligned}
$$

From proposition \ref{Lemma B.2} and $\lambda_{k+1} = \lambda_k + \delta_k$, we get:

$$
\begin{aligned}
(i) &\leq \lambda_{k}\left(\left[
\frac{\sqrt{|\kappa|}d_{\Nc}(y_{\lambda, k+1}^{*}, y_{\lambda, k}^{*})}{\tanh(\sqrt{|\kappa|}d_{\Nc}(y_{\lambda, k+1}^{*}, y_{\lambda, k}^{*}))}
\right] +\frac{5 T \beta_{k} \mu_{g}}{16}+\frac{\delta_{k}}{\lambda_{k}}\right) \mathbb{E}\left[d_{\Nc}^2(y_{k+1}, y_{\lambda, k}^{*}) \mid \mathcal{F}_{k}\right] - \lambda_{k} \mathcal{I}_{k} \\
&+ \underbrace{O\left(\xi^{2} l_{\lambda, 0}^{2}\right) \frac{\lambda_{k} \alpha_{k}^{2}}{\mu_{g} T \beta_{k}}d^2(q_{k}^{x}, 0) + O\left(\xi^{2} l_{*, 0}^{2}\right) \lambda_{k}(\alpha_{k}^{2} \sigma_{f}^{2} + \beta_{k}^{2} \sigma_{g}^{2}) + O\left(\frac{l_{f, 0}^{2}}{\mu_{g}^{3}}\right) \frac{\delta_{k}}{\lambda_{k}^{2}}}_{(\text{iii})}.
\end{aligned}
$$

Given the step-size rules \ref{descent-lemma-step}, we obtain:

$$
(i) \leq \lambda_{k}\left(\left[
\frac{\sqrt{|\kappa|}d_{\Nc}(y_{\lambda, k+1}^{*}, y_{\lambda, k}^{*})}{\tanh(\sqrt{|\kappa|}d_{\Nc}(y_{\lambda, k+1}^{*}, y_{\lambda, k}^{*}))}
\right] +\frac{T \beta_{k} \mu_{g}}{2}\right) \mathbb{E}\left[d_{\Nc}^2(y_{k+1}, y_{\lambda, k}^{*}) \mid \mathcal{F}_{k}\right] - \lambda_{k} \mathcal{I}_{k} + (iii).
$$

Leveraging Proposition \ref{Lemma B.3} within the framework to estimate \( d_{\Nc}^2(y_{k+1}, y_{\lambda, k}^{*}) \), we derive:

$$
\begin{aligned}
(i) &\leq -\frac{\lambda_{k} T \mu_{g} \beta_{k}}{4} \mathcal{I}_{k} + O\left(\xi^{2} l_{*, 0}^{2}\right) \frac{\alpha_{k}}{\mu_{g} T} + (iii) \\
&= -\frac{\lambda_{k} T \mu_{g} \beta_{k}}{4} \mathcal{I}_{k} + O\left(\xi^{2} l_{*, 0}^{2}\right) \frac{\alpha_{k}}{\mu_{g} T} + O\left(T + \xi^{2} l_{*, 0}^{2}\right) \lambda_{k}(\alpha_{k}^{2} \sigma_{f}^{2} + \beta_{k}^{2} \sigma_{g}^{2}) + O\left(\frac{l_{f, 0}^{2}}{\mu_{g}^{3}}\right) \frac{\delta_{k}}{\lambda_{k}^{2}}.
\end{aligned}
$$

Given the inequality $(1+a / 2)(1-3 a / 4) \leq 1-a / 4$ for $a>0$, we estimate the term $(ii)$ using Proposition \ref{Lemma B.4}:

\[
\begin{aligned}
(ii) &\leq \lambda_{k}\left(\frac{\sqrt{\left| \kappa \right|} d_{\Nc}(y_{k+1}^{*},y_{k}^{*})}{\tanh(\sqrt{\left| \kappa \right|} d_{\Nc}(y_{k+1}^{*},y_{k}^{*}))} + \frac{\delta_{k}}{\lambda_{k}}+\frac{3 T \gamma_{k} \mu_{g}}{8}+\frac{\lambda_{k} T \beta_{k} \mu_{g}}{32}\right) \mathbb{E}\left[d_{\Nc}\left(z_{k+1},y_{k}^{*}\right)^{2} \mid \mathcal{F}_{k}\right] -\lambda_{k} \mathcal{J}_{k} \\
&\quad +\underbrace{O\left(\xi^{2} l_{*, 0}^{2}\right) \frac{\lambda_{k+1} \alpha_{k}^{2}}{T \mu_{g} \gamma_{k}}\left\|q_{k}^{x}\right\|^{2}+O\left(\xi^{2} \lambda_{k+1} l_{*, 0}^{2}\right)\left(\alpha_{k}^{2} \sigma_{f}^{2}+\beta_{k}^{2} \sigma_{g}^{2}\right)}_{(iv)}.
\end{aligned}
\]

Assuming $\beta_{k} \leq \gamma_{k}$, and thus $\delta_{k} / \lambda_{k} < T \mu_{g} \gamma_{k} / 32$, we have:

\[
(ii) \leq \lambda_{k}\left(\frac{\sqrt{\left| \kappa \right|} d_{\Nc}(y_{k+1}^{*},y_{k}^{*})}{\tanh(\sqrt{\left| \kappa \right|} d_{\Nc}(y_{k+1}^{*},y_{k}^{*}))} +\frac{T \gamma_{k} \mu_{g}}{2}\right) \mathbb{E}\left[d\left(z_{k+1},y_{k}^{*}\right)^{2} \mid \mathcal{F}_{k}\right]-\lambda_{k} \mathcal{J}_{k}+(iv).
\]

Following the argument for $(i)$, Proposition \ref{Lemma B.5} provides:

\[
(ii) \leq -\frac{\lambda_{k} T \mu_{g} \gamma_{k}}{4} \mathcal{J}_{k} + O\left(\xi^{2} l_{*, 0}^{2}\right) \frac{\alpha_{k} \beta_{k}}{T \mu_{g} \gamma_{k}}\left\|q_{k}^{x}\right\|^{2} + O\left(\xi^{2} \lambda_{k} l_{*, 0}^{2}\right)\left(\alpha_{k}^{2} \sigma_{f}^{2}+\beta_{k}^{2} \sigma_{g}^{2}\right) + O\left(\lambda_{k}\right) T \gamma_{k}^{2} \sigma_{g}^{2}.
\]

Upon combining the bounds for $(i)$ and $(ii)$ and rearranging terms, we obtain:

\[
\begin{aligned}
\mathbb{E}\left[{V}_{k+1}-{V}_{k} \mid \mathcal{F}_{k}\right] &\leq -\frac{\xi \alpha_{k}}{2}\left\|\nabla F\left(x_{k}\right)\right\|^{2} + \frac{\xi \alpha_{k}}{2} \cdot 3 C_{\lambda}^{2} \lambda_{k}^{-2} + \frac{\xi^{2} l_{F, 1}}{2}\left(\alpha_{k}^{2} \sigma_{f}^{2}+\beta_{k}^{2} \sigma_{g}^{2}\right) \\
&\quad -\frac{\xi \alpha_{k}}{4}\left(1-O\left(\frac{\xi l_{g, 1} l_{*, 0}^{2} \beta_{k}}{\mu_{g} T \gamma_{k}}\right)-O\left(\frac{\xi l_{g, 1} l_{*, 0}^{2}}{\mu_{g} T}\right)\right) \mathbb{E}\left[\left\|q_{k}^{x}\right\|^{2} \mid \mathcal{F}_{k}\right] \\
&\quad -\frac{\lambda_{k} l_{g, 1} T \mu_{g} \beta_{k}}{4} \mathcal{I}_{k}-\frac{\lambda_{k} l_{g, 1} T \mu_{g} \gamma_{k}}{4} \mathcal{J}_{k} \\
&\quad +O\left(T+\xi^{2} l_{*, 0}^{2}\right) \cdot l_{g, 1} \lambda_{k}\left(\alpha_{k}^{2} \sigma_{f}^{2}+\left(\beta_{k}^{2}+\gamma_{k}^{2}\right) \sigma_{g}^{2}\right) + O\left(\frac{l_{g, 1} l_{f, 0}^{2}}{\mu_{g}^{3}}\right) \frac{\delta_{k}}{\lambda_{k}^{2}}.
\end{aligned}
\]

A key requirement is that terms driven by $\mathbb{E}\left[d^2(q_{k}^{x}, 0)\right]$ remain negative. To ensure this, we impose:

\begin{equation}
\begin{array}{ll}
\text{(Step-size rules):} & \xi l_{g, 1} l_{*, 0}^{2} \beta_{k} \leq c_{1} \mu_{g} T \gamma_{k}, \\
& \xi l_{g, 1} l_{*, 0}^{2} \leq c_{2} \mu_{g} T,
\end{array}
\end{equation}

for some absolute constants $c_{1}, c_{2} > 0$, which are achievable given $\beta_{k} \leq \gamma_{k}$ and condition (3b) with sufficiently small $c_{\xi} > 0$. Upon satisfying these conditions, we derive that:

$$
\begin{aligned}
\mathbb{E}\left[{V}_{k+1} - {V}_{k} \mid \mathcal{F}_{k}\right] &\leq -\frac{\xi \alpha_{k}}{2} \grad^2 F(x_{k}) - \frac{\lambda_{k} T \mu_{g} \gamma_{k}}{4} d_{\Nc}^2(z_{k}, y_{k}^{*}) - \frac{\lambda_{k} T \mu_{g} \beta_{k}}{4} d_{\Nc}^2(y_{k}, y_{\lambda, k}^{*}) \\
&+ O\left(\xi C_{\lambda}^{2}\right) \frac{\alpha_{k}}{\lambda_{k}^{2}} + O\left(\frac{l_{g, 1} l_{f, 0}^{2}}{\mu_{g}^{3}}\right) \frac{\delta_{k}}{\lambda_{k}^{2}} + O\left(\xi^{2} l_{F, 1}\right)(\alpha_{k}^{2} \sigma_{f}^{2} + \beta_{k}^{2} \sigma_{g}^{2}) \\
&+ O\left(T + \xi^{2} l_{*, 0}^{2}\right) \cdot l_{g, 1} \lambda_{k}(\alpha_{k}^{2} \sigma_{f}^{2} + (\beta_{k}^{2} + \gamma_{k}^{2}) \sigma_{g}^{2}).
\end{aligned}
$$

Summing over $k = 0$ to $K - 1$, and focusing on the dominant terms, given that $\sum_{k} \delta_{k} / \lambda_{k}^{2} = O(1)$ (due to $\delta_{k} / \lambda_{k} = O(1 / k)$ and $\lambda_{k} = \text{poly}(k)$), leads us to the theorem conclusion.

\textbf{Note:} The effect of the sectional curvature $\kappa$ here is negligible because we're implicitly using the approximation $x/\tanh(x) \approx 1 + x^2/3 + O(x^4)$. Terms like $\frac{\sqrt{|\kappa|}d(w_k, w_{k+1})}{\tanh(\sqrt{|\kappa|}d(w_k, w_{k+1}))} \approx 1 + |\kappa| d^2(w_k, w_{k+1})/3 + O(\cdot)$ appear in the Alexandrov space cosine law. Summing from $k = 0$ to $K - 1$ and focusing on dominant terms, the curvature $\kappa$'s effect on the final result is negligible, since negative terms like $-\mathrm{grad}^2 F(x_k)$, -$d_{\Nc}^2(z_k, y_k^*)$, and -$d_{\Nc}^2(y_k, y_{\lambda, k}^{*})$ do not affect the final inequality.

\subsection{Proof of Corollary 3}
\label{Corollary3Proof}

We begin by establishing that the step-size design within the theorem ensures \(\lambda_{k}=\gamma_{k} /(2 \alpha_{k})\) for all \(k\). This follows from the initial condition \(\lambda_{0}=\gamma_{0} /(2 \alpha_{0})\) and, by mathematical induction, we derive:

\[
\frac{T \mu_{g}}{16} \alpha_{k} \lambda_{k}^{2}=\frac{T}{32} \frac{c_{\gamma}}{2 c_{\alpha}}(k+k_{0})^{-2 c+a}
\]

and

\[
\frac{c_{\gamma}}{2 c_{\alpha}}\left((k+k_{0}+1)^{a-c}-(k+k_{0})^{a-c}\right) \leq \frac{(a-c) c_{\gamma}}{2 c_{\alpha}}(k+k_{0})^{-1-c+a}.
\]

Given that \(c \leq 1\) and \(T \geq 32\), it holds that

\[
\lambda_{k+1}=\frac{c_{\gamma}}{2 c_{\alpha}}(k+k_{0}+1)^{a-c}=\frac{\gamma_{k+1}}{2 \alpha_{k+1}}.
\]

By applying the step-size designs to a manifold, we obtain:

\begin{align*}
\sum_{k=0}^{K-1} \frac{\mathbb{E}[\|\grad F(x_{k})\|^{2}]}{(k+k_{0})^{a}} &\leq O_{\text{P}}(1) \cdot \sum_{k} \frac{1}{(k+k_{0})^{3 a-2 c}} \\
&\quad + O_{\text{P}}(\sigma_{f}^{2}) \cdot \sum_{k} \frac{1}{(k+k_{0})^{a+c}} \\
&\quad + O_{\text{P}}(\sigma_{g}^{2}) \cdot \sum_{k} \frac{1}{(k+k_{0})^{3 c-a}} + O_{\text{P}}(1).
\end{align*}

The choices of rates \(a, c \in [0,1]\) depend on the specific stochasticity of the gradients. Letting \(b=a-c\), and with the step-size design, \(\lambda_{k}=\gamma_{k} /(2 \alpha_{k})=O(k^{b})\). Considering a random variable \(R\) uniformly distributed over \(\{0,1, ..., K\}\), the inequality is reframed as:

\[
\frac{K}{(K+k_{0})^{a}} \mathbb{E}[\|\grad F(x_{R})\|^{2}] \geq K^{1-a} \cdot \mathbb{E}[\|\grad F(x_{R})\|^{2}]
\]

We examine three scenarios based on the stochasticity in the upper and lower-level objectives:

\begin{enumerate}
    \item \textbf{Stochastic in both objectives (\(\sigma_{f}^{2}, \sigma_{g}^{2}>0\))}: Setting \(a=5/7, c=4/7\) leads to \(\lambda_{k}=O(k^{1/7})\). The dominating term becomes \(O(\log K)\), resulting in:

    \[
    \mathbb{E}[\|\grad F(x_{R})\|^{2}]=O\left(\frac{\log K}{K^{2/7}}\right).
    \]

    \item \textbf{Stochastic only in the upper-level (\(\sigma_{f}^{2}>0, \sigma_{g}^{2}=0\))}: Here, \(a=3/5, c=2/5\) is chosen, simplifying to:

    \[
    \mathbb{E}[\|\grad F(x_{R})\|^{2}]=O\left(\frac{\log K}{K^{2/5}}\right).
    \]

    \item \textbf{Deterministic case (\(\sigma_{f}^{2}=0, \sigma_{g}^{2}=0\))}: With \(a=1/3, c=0\), we find:

    \[
    \|\grad F(x_{K})\|^{2}=O\left(\frac{\log K}{K^{2/3}}\right).
    \]

\end{enumerate}

This proof adaptation ensures that the step-size and \(\lambda_{k}\) designs are tailored for the geometric complexities of Riemannian manifolds, thereby facilitating convergence under various stochastic settings.

\end{document}